\documentclass[11pt]{article}

\usepackage[margin=1.05in]{geometry}
\usepackage[T1]{fontenc}
\usepackage{lmodern}
\usepackage{microtype}
\usepackage{amsmath,amssymb,amsthm,mathtools}
\usepackage{booktabs}
\usepackage{graphicx}
\usepackage{array}
\usepackage{enumitem}
\usepackage{natbib}
\usepackage[colorlinks=true,allcolors=blue]{hyperref}
\usepackage[capitalise,noabbrev]{cleveref}

\newtheorem{theorem}{Theorem}[section]
\newtheorem{proposition}[theorem]{Proposition}
\newtheorem{corollary}[theorem]{Corollary}
\newtheorem{lemma}[theorem]{Lemma}
\theoremstyle{definition}
\newtheorem{definition}[theorem]{Definition}
\newtheorem{remark}[theorem]{Remark}

\newcommand{\Pp}{\mathbb{P}}
\newcommand{\E}{\mathbb{E}}
\newcommand{\Dir}{\operatorname{Dir}}
\newcommand{\Beta}{\operatorname{Beta}}
\newcommand{\Unif}{\operatorname{Unif}}
\newcommand{\one}{\mathbf{1}}
\newcommand{\dd}{\mathrm{d}}
\newcommand{\calC}{\mathcal{C}}

\newcommand{\pK}{p_{\mathrm K}}
\newcommand{\pSP}{p_{\mathrm{SP}}}
\newcommand{\Kad}{K_2^{\mathrm{ad}}}

\newcommand{\capn}{\operatorname{cap}}

\makeatletter
\let\old@fnsymbol\@fnsymbol
\renewcommand{\@fnsymbol}[1]{%
  \ifnum#1=5 \ensuremath{\diamond}%
  \else
    \old@fnsymbol{#1}%
  \fi
}
\makeatother

\title{Gaffke's confidence interval for the mean of bounded data\\ is inadmissible but  asymptotically efficient}
\author{Jiahao Ming\thanks{Department of Statistics and Actuarial Science, University of Waterloo, Canada.
\texttt{j5ming@uwaterloo.ca}}
\and 
Aaditya Ramdas\thanks{Department of Statistics and Data Science, Carnegie Mellon University, USA.
\texttt{aramdas@cmu.edu}}
\and 
Yi Shen\thanks{Department of Statistics and Actuarial Science, University of Waterloo, Canada. \texttt{yi.shen@uwaterloo.ca}}
\and 
Ruodu Wang\thanks{Department of Statistics and Actuarial Science, University of Waterloo, Canada.   \texttt{wang@uwaterloo.ca}}
\and Ian Waudby-Smith\thanks{Department of Statistics, University of California, Berkeley,  USA. \texttt{ianws@berkeley.edu}} 
}
\date{\today}

\begin{document}
\maketitle

\begin{abstract}
Given observations $\mathbf x=(x_1,\dots,x_n)$, \citet{Gaffke2005} defined
\[
K_n(\mathbf x)=\Pp_{\mathbf D}\!\left\{\sum_{i=1}^n x_iD_i\le 1\right\},
\qquad (D_0,D_1,\ldots,D_n)\sim\mathrm{Dirichlet}(1,\ldots,1),
\]
and conjectured that it is a $p$-value whenever the inputs are independent e-values.  
Recently, \citet{VlassisThomas2026} proved this conjecture.  Inverting the tests for
observations in $[0,1]$ gives the confidence interval studied by
\citet{LearnedMillerThomas2020}; which reduces to Clopper--Pearson for Bernoulli data.

We give a nuanced finite- and large-sample account of Gaffke's test and interval.  First, for every $\mathbf x\in[0,\infty)^n$ and every elementary
symmetric polynomial $e_k$,
\(
K_n(\mathbf x)e_k(\mathbf x)\le {n\choose k},
\)
so the Gaffke $p$-value is never larger than the SymPol $p$-value of
\citet{MingShenWang2026}.  However,
Gaffke's p-value is inadmissible.  For $n=2$, we construct a valid rule $\Kad$ that is strictly smaller on mixed
configurations and is the unique admissible rule that dominates $K_2$.  A neutral-face extension proves inadmissibility of $K_n$ for every $n\ge2$. If one independent
uniform random variable is allowed, there is an even simpler full-dimensional
improvement: on the upper orthant, where $K_n(\mathbf x)=1/\prod_i x_i$,
replace it by $U/\prod_i x_i$.  

The equal-tail Gaffke confidence interval $I_n$ is nevertheless first-order
asymptotically efficient: for iid observations on $[0,1]$ with unknown variance $\sigma^2>0$,
\[
\sqrt n\,\operatorname{Width}(I_n)\longrightarrow
2\sigma z_{1-\alpha/2}\qquad\text{almost surely}.
\]
Our simulations also find that, among a variety of bounded-mean intervals considered,
the Gaffke interval is the shortest throughout the tested settings, including
comparisons with a recent empirical Berry--Esseen procedure having the same
first-order Gaussian target.
\end{abstract}

\noindent\textbf{Keywords:}
bounded mean; confidence interval; Dirichlet average; distribution-free test;
e-value; elementary symmetric polynomial; admissibility; randomized p-value.

\section{Introduction}

Testing the mean of a nonnegative distribution without imposing a parametric
model is a classical but difficult problem.  Given independent nonnegative
random variables $X_1,\ldots,X_n$, consider
\[
H_0:\quad \E[X_i]\le1\quad\text{for every }i.
\]
Gaffke proposed a statistic obtained by adjoining a zero to the observations,
averaging the resulting $n+1$ values with uniform Dirichlet weights, and
evaluating the conditional probability that the average does not exceed one
\citep{Gaffke2005}.  He conjectured that this conditional probability is a
valid $p$-value; \citet{VlassisThomas2026} recently proved the conjecture in
full.

For the bounded-mean problem, where the observations lie in $[0,1]$, the
one-sided Gaffke tests can be inverted into a confidence interval.
\citet{LearnedMillerThomas2020} studied this interval before the validity
conjecture had been resolved.  Its endpoints are quantiles of Dirichlet
averages, and for Bernoulli observations it coincides with the classical
Clopper--Pearson interval.  Modern betting and concentration constructions
provide many alternative fixed-time and sequential confidence bounds, such as
those of \citet{WaudbySmithRamdas2024,orabona2023tight,VO25} and
\citet{austern2022efficient}.  This makes both finite-sample width and
large-sample efficiency natural criteria for evaluating Gaffke's proposal.

One set of findings is strongly favorable.  A conditional central limit
theorem for uniform Dirichlet averages shows that, at every nondegenerate
iid distribution, the Gaffke interval is first-order equivalent to the
usual normal interval.  Its scaled width converges to the oracle Gaussian
constant and its coverage converges to the nominal level.  Our numerical
experiments likewise find shorter intervals than the other procedures
considered, often by a substantial margin when the variance is small.

A second set of findings adds an important finite-sample qualification.  We
view Gaffke's statistic as a rule that converts independent e-values into one
$p$-value and ask whether that rule is admissible in the pointwise Wald sense:
can another valid rule be everywhere no larger and somewhere strictly
smaller?  The answer changes with the dimension.  For one input, Gaffke is
exactly Markov's conversion and is admissible.  For two inputs, it is
inadmissible.  We derive an explicit rule $\Kad$, prove that it is valid for
all independent e-variables, and prove the stronger envelope result that every
valid rule below $K_2$ must lie above $\Kad$.  Thus $\Kad$ is the unique
admissible dominator of $K_2$.  By applying this improvement on neutral
two-dimensional faces, we also prove unrestricted inadmissibility of $K_n$
for every $n\ge2$.

We also study what changes when external randomization is permitted.  Using a
single independent $U\sim\Unif(0,1)$, we construct the rule
\[
K_n^{\Pi}(\mathbf x;U)=
\begin{cases}
U/\prod_i x_i,&\min_i x_i\ge1,\\
K_n(\mathbf x),&\min_i x_i<1.
\end{cases}
\]
It is a valid randomized p-value, it is never larger than Gaffke's rule, and
it is strictly smaller throughout the upper orthant whenever $U<1$.  A
point-mass least-favorable argument shows that the uniform factor is locally
optimal there.  This construction is related to the uniformly randomized
Markov inequality of \citet{ramdas2026randomized}, but its patching with Gaffke
outside the upper orthant requires a separate product-mixture argument.

These conclusions concern different optimality criteria and should not be
conflated.  Test admissibility is a finite-sample, pointwise property in the
space of e-value vectors.  Confidence-interval efficiency concerns the
parameter-indexed family obtained by inversion, and its first-order behavior
as $n$ grows.  A strict pointwise improvement of a test need not move an
interval endpoint.  In particular, our higher-dimensional dominator changes
$K_n$ only when all but at most two coordinates equal the neutral value one;
under mean inversion these are exceptional configurations, and the rule is
not globally coordinatewise monotone.  We therefore do not currently obtain
a generally useful tighter bounded-mean interval from the inadmissibility
result.  The empirical interval comparisons in this paper should be read in
that light: they compare available implementable procedures, not an unknown
admissible envelope of all possible inversions.

Our other finite-sample comparison concerns the SymPol method of
\citet{MingShenWang2026}.  We prove
\[
K_n(\mathbf x)e_k(\mathbf x)\le {n\choose k},\qquad k=0,\ldots,n,
\]
for every nonnegative vector.  Equivalently, the SymPol $p$-value is never
smaller than the Gaffke $p$-value.  Thus Gaffke pointwise dominates SymPol
whenever both are valid, although SymPol retains the important advantage of
validity under the broader co-valid dependence condition.

The remainder of the paper is organized as follows.  \cref{sec:setup}
introduces the two mean tests, and \cref{sec:domination} proves pointwise
domination.  \cref{sec:admissibility} develops the exact two-input admissible
improvement, while \cref{sec:higher-admissibility} gives the
higher-dimensional consequences, exact capacity criteria, and randomized
improvements.  \cref{sec:ci} derives the confidence interval, and
\cref{sec:asymptotics} establishes its first-order efficiency.
\cref{sec:simulations} reports the numerical comparisons, including the
empirical Berry--Esseen experiment.

\section{Two distribution-free mean tests}\label{sec:setup}

\subsection{The Gaffke p-value}

For $n\ge 1$ and $\mathbf x=(x_1,\ldots,x_n)\in[0,\infty)^n$, let
\begin{equation}\label{eq:Kdef}
K_n(\mathbf x)
:=
\Pp\!\left\{\sum_{i=1}^n x_iD_i\le 1\right\},
\qquad
(D_0,D_1,\ldots,D_n)\sim\Dir(1,\ldots,1).
\end{equation}
Note that $D_0$ is not explicitly used because it is multiplied by $x_0=0$ on the left hand side.
Also, the probability in \eqref{eq:Kdef} is conditional on the observed vector $\mathbf x$.
We use the convention $K_0=1$.  The exponential representation
\[
D_i=\frac{E_i}{\sum_{j=0}^nE_j},
\qquad E_0,\ldots,E_n\stackrel{\mathrm{iid}}{\sim}\operatorname{Exp}(1),
\]
gives the equivalent form
\begin{equation}\label{eq:Kexp}
K_n(\mathbf x)=
\Pp\!\left\{\sum_{i=1}^n(x_i-1)E_i\le E_0\right\}.
\end{equation}
In particular, $K_n$ is nonincreasing in each coordinate and equals one if all
coordinates are at most one.
The main result of \citet{VlassisThomas2026} is the following.

\begin{theorem}[Gaffke validity]\label{thm:VT}
If $X_1,\ldots,X_n$ are independent, nonnegative, and satisfy
$\E [X_i]\le1$ for every $i$, then
\[
\Pp\{K_n(\mathbf X)\le\alpha\}\le\alpha,
\qquad 0\le\alpha\le1.
\]
Thus $\pK(\mathbf X):=K_n(\mathbf X)$ is a finite-sample distribution-free $p$-value.
\end{theorem}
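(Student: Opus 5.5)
The plan is to reduce to extremal marginals and then induct on $n$, using the exponential representation \eqref{eq:Kexp}.

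\emph{Step 1 (reduction to two-point laws).} Fix $\alpha\in(0,1)$ and write $A_\alpha=\{\mathbf x:K_n(\mathbf x)\le\alpha\}$. Since $K_n$ is coordinatewise nonincreasing, $A_\alpha$ is an upper set. Under independence, $\Pp\{\mathbf X\in A_\alpha\}$ is multilinear in the marginal laws $(\mu_1,\ldots,\mu_n)$. Hence, with all other marginals fixed, it is linear in $\mu_i$ over the convex set $\{\mu_i:\mu_i\ge0,\ \int x\,\mu_i(\dd x)\le1\}$. A single linear moment constraint means the supremum is approached at laws with at most two atoms. Monotonicity lets us push mass upward until the mean equals one. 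So it suffices to treat $\E X_i=1$ with each $X_i$ supported on $\{a_i,b_i\}$, where $0\le a_i\le1\le b_i$. Further, $a_i$ may be taken equal to $0$ or to a point where the constraint $K_n\le\alpha$ switches on. This step is routine.

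\emph{Step 2 (induction via conditioning).} The case $n=1$ is Markov's inequality, since $K_1(x)=\min(1,1/x)$. For the inductive step, condition on $E_n$ and $X_n=x$ in \eqref{eq:Kexp}. This gives
\[
K_n(\mathbf y,x)=\E_{E_n}\Bigl[\Pp\Bigl\{\textstyle\sum_{i<n}(y_i-1)E_i\le E_0-(x-1)E_n\Bigr\}\Bigr],
\]
a mixture of ``shifted'' $(n-1)$-dimensional Gaffke functionals. I would therefore prove a stronger inductive statement for the family
\[
K_{n-1}^{(t)}(\mathbf y)=\Pp\Bigl\{\textstyle\sum_i(y_i-1)E_i\le E_0+t\Bigr\},\qquad t\in\mathbb R,
\]
namely an integrated bound of the form
\[
\E\bigl[\phi\bigl(K^{(t)}_{n-1}(\mathbf Y)\bigr)\bigr]\le\int_0^1\phi(u)\,\dd u
\]
for suitable nonincreasing test functions $\phi$. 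This is a stochastic-dominance-by-uniform statement, stable under the mixing over $E_n$ and $X_n$. The hope is that the scale-equivariance of exponentials lets the shift $t$ be absorbed into a rescaling of $E_0$. Then conditioning on a two-point $X_n$ turns the claim into a two-term convex-combination inequality in the inductive quantity.

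\emph{Step 3 (the main obstacle).} The hard step is closing the induction: verifying that mixing over a two-point $X_n$ with mean one preserves uniform-domination of $K_n(\mathbf X)$. I would first try to establish the required two-point inequality by an explicit Laplace-transform (partial-fraction) formula for $K_n$ on configurations with atoms $\{0,b_i\}$. For these the Dirichlet-average probability is a piecewise rational function of the $b_i$'s.

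If direct verification fails, I would fall back to a coupling argument. This would construct $U\sim\Unif(0,1)$ with $U\le K_n(\mathbf X)$ almost surely, sequentially coordinate by coordinate. Each coordinate would use the randomized Markov device at that step, and the Dirichlet structure would make the conditional survival function of the remaining coordinates again of Gaffke type.

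I expect this closing inequality to carry essentially all the difficulty. It is why the conjecture of \citet{Gaffke2005} remained open until \citet{VlassisThomas2026}.
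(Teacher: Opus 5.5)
\paragraph{Assessment.} The paper does not prove \cref{thm:VT}; it imports the result from \citet{VlassisThomas2026}. Measured against that, your proposal has a genuine gap: the step that carries the whole theorem is never carried out.

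Step~1 is sound. It is the same extreme-point reduction the paper uses, via \cref{lem:ad-mixture} and multilinearity, for $\Kad$ and $K_n^{\Pi}$. Your restriction of $a_i$ to $0$ or a switching point is also fine: between jumps of the conditional rejection probability, the total rejection probability decreases in $a_i$. But after Step~1 you still face the full problem. You must bound the mass of the upper set $\{K_n\le\alpha\}$ under every product of mean-one two-point laws. Step~3 only lists two things you would try, a partial-fraction verification and a sequential coupling. It never states, let alone proves, the inequality that would close the induction. That inequality is the entire content of the theorem, so nothing has been established beyond $n=1$.

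\paragraph{Why the induction in Step~2 cannot close as framed.} For nonincreasing $\phi$, your bound $\E\phi(P)\le\int_0^1\phi(u)\,\dd u$ is implied by $P$ being a p-variable, and with indicator test functions it is equivalent to it. So the only strengthening comes from the shifted family.
\begin{itemize}
\item For $t\ge0$ you have $K^{(t)}_{n-1}\ge K^{(0)}_{n-1}=K_{n-1}$, so nothing is gained.
\item For $t<0$ the claim is false. With $\mathbf Y\equiv\mathbf 1$ deterministic, $K^{(t)}_{n-1}(\mathbf 1)=\Pp\{E_0\ge -t\}=e^{t}<1$, so $\Pp\{K^{(t)}_{n-1}(\mathbf Y)\le e^{t}\}=1$.
\end{itemize}
Negative shifts are exactly the ones that occur, because $t=-(x-1)E_n<0$ whenever $x>1$. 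An additive shift also cannot be absorbed by rescaling $E_0$.

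The correct way to absorb it is to condition on $D_n$ rather than $E_n$, which gives the slicing identity \eqref{eq:slicing}. For $x>1$,
\[
K_n(\mathbf y,x)=n\int_0^{1/x}(1-t)^{n-1}K_{n-1}(\mathbf y/c(t))\,\dd t,\qquad c(t)=\frac{1-xt}{1-t}<1.
\]
Each slice is therefore a Gaffke functional of inputs inflated to mean $1/c(t)>1$, and $(n-1)$-dimensional validity does not apply to such inputs. Moreover, the event $\{K_n\le\alpha\}$ is determined by an integral over the whole curve $t\mapsto K_{n-1}(\mathbf Y/c(t))$, not by any single slice. Your coupling fallback hits the same obstacle: the conditional functionals are of Gaffke type only at inflated arguments.

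What is missing is an inductive hypothesis strong enough to control such curve integrals for inflated e-variables, or some other mechanism altogether. Your proposal does not identify one.
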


\subsection{The symmetric-polynomial (SymPol) p-value}

For $k=0,\ldots,n$, define the elementary symmetric polynomials
\[
e_k(\mathbf x)=\sum_{\substack{S\subseteq[n]\\|S|=k}}\prod_{i\in S}x_i,
\qquad
A_k(\mathbf x)=\frac{e_k(\mathbf x)}{\binom nk},
\]
with $e_0=A_0=1$.  The SymPol statistic of \citet{MingShenWang2026} is
\[
M_{\mathrm{SP}}(\mathbf x)=\max_{0\le k\le n}A_k(\mathbf x),
\]
and the corresponding SymPol $p$-value is
\begin{equation}\label{eq:pSP}
\pSP(\mathbf x)=\frac{1}{M_{\mathrm{SP}}(\mathbf x)}.
\end{equation}
Because $A_0=1$, no truncation at one is needed.

A vector $\mathbf E=(E_1,\ldots,E_n)$ of nonnegative random variables is called a
vector of co-valid e-variables if
\[
\E[E_i\mid \mathbf E_{-i}]\le1\quad\text{almost surely for every }i.
\]
Independent nonnegative variables with means at most one are co-valid
e-variables.  The optimized betting inequality of \citet{MingShenWang2026}
states that
\begin{equation}\label{eq:SPvalid}
\Pp\!\left\{\max_{0\le k\le n}A_k(\mathbf E)\ge t\right\}\le\frac1t,
\qquad t>0.
\end{equation}
Hence \eqref{eq:pSP} is a valid $p$-value under co-validity, a broader
dependence model than the independence assumption in \cref{thm:VT}. We will nevertheless be focusing on the independent setting in this paper.

\subsection{Pointwise domination}\label{sec:domination}

The comparison below is deterministic and therefore does not depend on any
probabilistic assumptions on the observed vector.

\begin{theorem}[Elementary-symmetric bounds]\label{thm:master}
For every $n\ge0$, every $\mathbf x\in[0,\infty)^n$, and every $k=0,\ldots,n$,
\begin{equation}\label{eq:master}
K_n(\mathbf x)e_k(\mathbf x)\le\binom nk.
\end{equation}
Equivalently, $K_n(\mathbf x)A_k(\mathbf x)\le1$ for every $k$.
\end{theorem}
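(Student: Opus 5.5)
The plan is to pass to a geometric (volume) form of $K_n$, to read off the bound for a single monomial of $e_k$ by a diagonal change of variables, and to handle the full sum over subsets either through a tiling of one product of simplices or by induction on $n$. The cases $k=0$ (where $K_n\le1$) and $n=0$ are trivial, so I assume $n\ge1$ and $1\le k\le n$ throughout.

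Since $(D_1,\dots,D_n)$ is uniform on the simplex $\Delta_n=\{\mathbf d\ge0:\sum_i d_i\le1\}$, which has volume $1/n!$, we have
\[
K_n(\mathbf x)=n!\,\mathrm{vol}(R),\qquad R=\Bigl\{\mathbf d\ge0:\ \textstyle\sum_i d_i\le1,\ \sum_i x_id_i\le1\Bigr\}.
\]
Fix $S\subseteq[n]$ with $|S|=k$ and define the linear map $T_S$ by $u_i=x_id_i$ for $i\in S$ and $u_i=d_i$ for $i\notin S$. Then $\mathrm{vol}(T_SR)=\prod_{i\in S}x_i\,\mathrm{vol}(R)$. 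Moreover $T_SR$ lies in the product of simplices
\[
P_S=\Bigl\{\mathbf u\ge0:\ \textstyle\sum_{i\in S}u_i\le1,\ \sum_{i\notin S}u_i\le1\Bigr\},
\]
because $\sum_{i\in S}x_id_i\le1$ and $\sum_{i\notin S}d_i\le1$ on $R$. Since $\mathrm{vol}(P_S)=1/(k!(n-k)!)$, this gives
\[
K_n(\mathbf x)\prod_{i\in S}x_i\le\binom nk .
\]
So the single-monomial version of \eqref{eq:master} is immediate. The real content of the theorem is that the sum over all $\binom nk$ subsets obeys the same bound, with no loss of a factor $\binom nk$.

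To get the sum, I would try to show that the scaled copies of $R$ fit disjointly inside a single region of volume $1/(k!(n-k)!)$. The natural first attempt is to refine $R$ by the ordering of the coordinates, so that $R$ is cut into cells indexed by permutations. On each cell, only one subset $S$ is assigned, chosen according to where the ``budget'' $\sum_i x_id_i\le1$ is spent versus the budget $\sum_i d_i\le1$. The aim is to show that the images of these cells under the corresponding maps $T_S$, after being mapped into a common copy of $P_{[k]}$ by relabeling coordinates, have disjoint interiors.

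If such a tiling proves hard to write down, the fallback is induction on $n$, using $e_k(\mathbf x)=e_k(\mathbf x')+x_ne_{k-1}(\mathbf x')$ with $\mathbf x'=(x_1,\dots,x_{n-1})$. The needed recursion for $K_n$ comes from the exponential form \eqref{eq:Kexp}: integrate out $E_n$, or equivalently slice $R$ along $d_n=t$. For fixed $t$, the slice is a copy of the region for $\mathbf x'$ with both budgets reduced. Rescaling this slice expresses $K_n$ as an integral over $t$ of $K_{n-1}$-type quantities. In these quantities the two budgets $1-t$ and $1-x_nt$ are unequal, so the induction hypothesis must be strengthened to a two-budget statement of the form
\[
\mathrm{vol}\Bigl\{\mathbf d\ge0:\ \textstyle\sum_i d_i\le a,\ \sum_i x_id_i\le b\Bigr\}\,e_k(\mathbf x)\le\dfrac{a^{n-k}b^k}{k!\,(n-k)!}.
\]
This is homogeneous in the right way, and at $a=b=1$ it reduces to \eqref{eq:master}. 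The inductive step would then be an explicit one-dimensional integral inequality in $t$, to be checked using convexity of $t\mapsto a^{n-k}b^k$ along the slice.

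The main obstacle is precisely this passage from a single monomial to the full sum $e_k$. Crude envelopes do not suffice. For example, the half-space bound $R\subseteq\{\sum_i(\lambda+\mu x_i)d_i\le\lambda+\mu\}$ combined with $\prod_i(\lambda+\mu x_i)\ge\lambda^{n-k}\mu^ke_k$ only gives the constant $n^n/(k^k(n-k)^{n-k})$, which is strictly larger than $\binom nk$. So either the disjoint tiling or the strengthened two-budget induction is genuinely needed. I would attempt the induction first, since its inductive step is a concrete calculus inequality.
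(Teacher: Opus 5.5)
Your fallback induction (slicing $R$ along $d_n=t$, bounding each slice by the $(n-1)$-dimensional statement, and combining via $e_k(\mathbf x)=e_k(\mathbf x')+x_ne_{k-1}(\mathbf x')$) is exactly the paper's proof in volume language, and your two-budget ``strengthening'' is equivalent to the original bound by homogeneity; the paper simply applies the hypothesis to the rescaled vector $\mathbf x'/c(t)$ with $c(t)=(1-x_nt)/(1-t)$. No convexity is needed to close the step: after substituting the bounds for $j=k$ and $j=k-1$, the integrand is exactly $-\frac{1}{k!\,(n-k)!}\frac{\dd}{\dd t}\bigl[(1-t)^{n-k}(1-x_nt)^k\bigr]$, so the integral over $[0,u]$ with $u=\min\{1,1/x_n\}$ equals $\bigl\{1-(1-u)^{n-k}(1-x_nu)^k\bigr\}/(k!\,(n-k)!)\le 1/(k!\,(n-k)!)$, which gives the claim after multiplying by $n!$.
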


\begin{proof}
We use induction on $n$.  The claim is immediate for $n=0$.  Assume it holds
in dimension $n-1$, and write
\[
\mathbf x=(\mathbf y,a),\qquad \mathbf y=(x_1,\ldots,x_{n-1}),\qquad a=x_n.
\]
We use the conventions $e_j(\mathbf y)=0$ and $\binom{n-1}{j}=0$ when
$j\notin\{0,\ldots,n-1\}$.

Let $T=D_n$.  Under the uniform Dirichlet law,
$T\sim\Beta(1,n)$ with density $n(1-t)^{n-1}$, and, conditionally on $T=t$,
\[
\left(\frac{D_0}{1-t},\ldots,\frac{D_{n-1}}{1-t}\right)
\sim\Dir(1,\ldots,1).
\]
Set
\[
u=\min\{1,1/a\},
\]
where $1/a=\infty$ if $a=0$, and, for $0\le t<u$, set
\[
c(t)=\frac{1-at}{1-t}>0.
\]
Conditioning on $T$ gives the slicing identity
\begin{equation}\label{eq:slicing}
K_n(\mathbf y,a)
=
 n\int_0^u(1-t)^{n-1}
 K_{n-1}\!\left(\frac{\mathbf y}{c(t)}\right)\dd t.
\end{equation}
Indeed, when $at\le1$, the defining event is equivalent to
$\sum_{i=1}^{n-1}y_iD_i'\le c(t)$; when $a>1$ and $t>1/a$, it is impossible.

The Pascal identity for elementary symmetric polynomials is
\begin{equation}\label{eq:pascal}
e_k(\mathbf y,a)=e_k(\mathbf y)+a e_{k-1}(\mathbf y).
\end{equation}
For any $j$, the induction hypothesis applied to $\mathbf y/c$ yields
\[
K_{n-1}(\mathbf y/c)e_j(\mathbf y)
=
c^jK_{n-1}(\mathbf y/c)e_j(\mathbf y/c)
\le\binom{n-1}{j}c^j.
\]
Using this inequality for $j=k$ and $j=k-1$ in \eqref{eq:pascal}, and then
substituting into \eqref{eq:slicing}, gives
\begin{align*}
e_k(\mathbf y,a)K_n(\mathbf y,a)
&\le n\int_0^u(1-t)^{n-1}
\left\{
\binom{n-1}{k}c(t)^k
+a\binom{n-1}{k-1}c(t)^{k-1}
\right\}\dd t\\
&=\binom nk\int_0^u
\left\{
(n-k)(1-t)^{n-k-1}(1-at)^k
+ka(1-t)^{n-k}(1-at)^{k-1}
\right\}\dd t.
\end{align*}
Terms with zero binomial coefficient are omitted at $k=0$ and $k=n$.
The integrand is
\[
-\frac{\dd}{\dd t}
\left[(1-t)^{n-k}(1-at)^k\right].
\]
Therefore
\begin{align*}
e_k(\mathbf y,a)K_n(\mathbf y,a)
&\le
\binom nk
\left\{1-(1-u)^{n-k}(1-au)^k\right\}\le\binom nk,
\end{align*}
where a factor with exponent zero is interpreted as one, and the last
inequality follows because the resulting product lies in $[0,1]$.  This
completes the induction.
\end{proof}

The above result implies that 
for every $\mathbf x\in[0,\infty)^n$,
\begin{equation}\label{eq:pointwise}
\pK(\mathbf x)=K_n(\mathbf x)\le
\frac{1}{\max_{0\le k\le n}A_k(\mathbf x)}
=\pSP(\mathbf x).
\end{equation}
Consequently, for every $\alpha\in[0,1]$,
\[
\{\mathbf x:\pSP(\mathbf x)\le\alpha\}
\subseteq
\{\mathbf x:\pK(\mathbf x)\le\alpha\}.
\]
In particular, under independent nonnegative observations, where both
procedures are valid, the Gaffke test is uniformly at least as powerful as SymPol.

The qualification concerning independence is essential: \eqref{eq:pointwise}
is a pointwise fact, but \cref{thm:VT} does not assert validity of $\pK$ under
weaker dependence assumptions, such as co-validity. 



\citet{MingShenWang2026} also consider the optimized betting or KL-inf statistic
\[
M_{\mathrm{bet}}(\mathbf x)=\sup_{0\le\lambda\le1}
\prod_{i=1}^n(1-\lambda+\lambda x_i).
\]
They show 
$M_{\mathrm{bet}}(\mathbf x)\le\max_kA_k(\mathbf x)$.  Thus, wherever all three procedures
are valid, their $p$-values satisfy
\[
\pK(\mathbf x)\le\pSP(\mathbf x)\le p_{\rm bet}(\mathbf x),\qquad \mbox{where~} p_{\rm bet}(\mathbf x) = \frac{1}{M_{\mathrm{bet}}(\mathbf x)}.
\]
We note that the outer inequality $\pK(\mathbf x)\le p_{\rm bet} (\mathbf x)$ was already known to~\citet{Gaffke2005}, so our results yield a finer story.

Elementary calculations show that if $x_i\le1$ for every $i$, then $\pK(\mathbf x)=\pSP(\mathbf x)=p_{\rm bet}(\mathbf x)=1$, and if $x_i\ge1$ for every $i$, then
$\pK(\mathbf x)=\pSP(\mathbf x)=p_{\rm bet}(\mathbf x)=  (\prod_{i=1}^n x_i)^{-1}$; thus, the three p-values agree on  the two extreme cases of data.

\section{Finite-sample admissibility as an e-to-p merger}
\label{sec:admissibility}

The pointwise domination in \eqref{eq:pointwise} compares Gaffke with a
particular competitor, but it does not answer whether Gaffke itself can be
improved.  We now study admissibility among all deterministic rules that
combine independent e-values.  This is the usual pointwise Wald notion used
for merging functions; see, for example, \citet{vovk2022admissible}.
External randomization is not considered.

\subsection{E-to-p mergers}
The following concept is introduced in \citet[Appendix G]{VW21} under the term ``ie-to-p merging functions'', where ``i'' stands for ``independent''. 
Since our theory exclusively concerns independent e-values, we drop ``i''  here and simply call them e-to-p mergers.  
We also drop   ``e-to-p''  in many places.

\begin{definition}[E-to-p mergers]
A Borel function $F:[0,\infty)^n\to[0,1]$ is an (e-to-p)
merger  if
\begin{equation}
    \label{eq:merger-def}
    \Pp\{F(E_1,\ldots,E_n)\le\alpha\}\le\alpha,
\qquad 0\le\alpha\le1,
\end{equation} 
for every collection of independent e-variables, meaning independent
nonnegative random variables with $\E[ E_i]\le1$.  A    merger $G$ dominates
$F$ if $G\le F$ pointwise, and strictly dominates it if strict inequality
holds somewhere.  A   merger is admissible if it has no strict dominator.
\end{definition}

Here and throughout, inequalities between functions are pointwise.  We sometimes call an e-to-p merger a \emph{valid rule}, when we discuss its validity.  
 
Note that for any e-to-p merger $F$ and independent e-variables $E_1,\dots,E_n$,  the random variable 
$F(E_1,\dots,E_n)$ is a p-variable.\footnote{In our context, a p-variable  is a random variable $P$ satisfying  $\Pp(P\le \alpha)\le \alpha$ for $\alpha \in [0,1]$; see e.g., \citet[Definition 1.2]{ramdas2025hypothesis}.}
Therefore, characterizing mergers is equivalent to characterizing p-values for testing the hypothesis  
$$H_0:\quad \E[X_i]\le1\quad\text{for every }i $$ for independent nonnegative

\citet[Appendix G]{VW21} also studied \emph{e-to-p merging functions} that satisfies \eqref{eq:merger-def} for arbitrarily dependent  e-variables.  

For one input,
\begin{equation}\label{eq:K1-adm}
K_1(x)=\min\{1,1/x\},
\end{equation}
with $1/0=\infty$. This merger's validity is exactly an application of Markov's inequality, and it is the only admissible merger that dominates every other merger (\citet[Proposition 2.2]{VW21}).

\subsection{The exact bivariate improvement}

We first present a simple lemma, showing that it suffices to focus on one-point and two-point distributions. 
\begin{lemma}\label{lem:ad-mixture}
Every probability law $\mu$ on $[0,\infty)$ with mean at most one is a mixture
of laws of the following forms:
\begin{enumerate}[label=(\roman*)]
\item point masses $\delta_a$ with $0\le a\le1$;
\item mean-one two-point laws
\begin{equation}\label{eq:ad-Qab}
Q_{a,b}=\frac{b-1}{b-a}\,\delta_a+
        \frac{1-a}{b-a}\,\delta_b,
\qquad 0\le a<1<b.
\end{equation}
\end{enumerate}
If $\mu$ has mean exactly one, the only point mass required is $\delta_1$.
\end{lemma}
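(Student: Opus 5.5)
The plan is to handle the mean-one case first, by an explicit pairing of mass below one with mass above one, and then reduce the general case to it.

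\emph{Mean-one case.} Suppose $\int x\,\mu(\dd x)=1$. Split $\mu$ into its restrictions $\mu_-$ to $[0,1)$, $\mu_1$ to $\{1\}$, and $\mu_+$ to $(1,\infty)$. The condition $\int(x-1)\,\mu(\dd x)=0$ says
\[
m:=\int(1-a)\,\mu_-(\dd a)=\int(b-1)\,\mu_+(\dd b).
\]
If $m=0$, then $\mu=\delta_1$ and there is nothing to prove. Otherwise define a measure on pairs $(a,b)\in[0,1)\times(1,\infty)$ by
\[
\pi(\dd a,\dd b)=\frac{(b-a)}{m}\,\frac{(1-a)\mu_-(\dd a)}{b-a}\cdot\frac{(b-1)\mu_+(\dd b)}{b-a}\cdot\frac{b-a}{m}\cdot\frac{1}{(1-a)(b-1)}\cdots
\]
More cleanly, set
\[
\nu(\dd a,\dd b)=\frac{b-a}{m}\,\mu_-(\dd a)\,\mu_+(\dd b).
\]
Then $\int Q_{a,b}\,\nu(\dd a,\dd b)$ assigns to a set $A\subseteq[0,1)$ the mass
\[
\int_A\int\frac{b-1}{b-a}\cdot\frac{b-a}{m}\,\mu_+(\dd b)\,\mu_-(\dd a)=\mu_-(A),
\]
using $\int(b-1)\,\mu_+=m$. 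Symmetrically, it assigns to $B\subseteq(1,\infty)$ the mass $\mu_+(B)$, using $\int(1-a)\,\mu_-=m$. The total mass of $\nu$ is $\mu_-([0,1))+\mu_+((1,\infty))$, so $\mu=\mu_1(\{1\})\,\delta_1+\int Q_{a,b}\,\dd\nu$ is a mixture with nonnegative weights summing to one. This proves the final sentence of the statement.

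\emph{General case.} If the mean $m_\mu<1$, peel off point masses on $[0,1)$ until the remainder has mean one. Let $c=\inf\{t:\mu([0,t])\ge\cdots\}$, or more simply:

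\begin{itemize}
\item Write $\mu=\theta\mu'+(1-\theta)\rho$, where $\rho$ is supported on $[0,1]$ (hence a mixture of point masses $\delta_a$ with $a\le1$) and $\mu'$ has mean exactly one.
\item To do this, remove mass from $\mu_-$ only. Since $\int(b-1)\,\dd\mu_+<\int(1-a)\,\dd\mu_-$, choose a sub-measure $\lambda\le\mu_-$ with $\int(1-a)\,\dd\lambda=\int(b-1)\,\dd\mu_+$. One can take $\lambda=s\mu_-$ with $s\in[0,1)$, by continuity in $s$.
\item Then $\lambda+\mu_1+\mu_+$, once normalized, has mean one, and the leftover $(1-s)\mu_-$ consists of point masses below one.
\item If $\mu_+=0$, take $\lambda=0$. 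The first component is then empty or just $\mu_1$, and every piece is a point mass in $[0,1]$.
\end{itemize}

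Apply the mean-one case to the normalized first component to finish.

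\emph{Main obstacle.} The only delicate points are measurability and normalization of the mixing measure $\nu$, which is a product measure times a continuous density, so measurability is automatic. We also need $\int(b-a)\,\dd\mu_-\,\dd\mu_+<\infty$. This holds because
\[
\int b\,\dd\mu_+\le 1
\]
when the mean is finite, and the mean is at most one by hypothesis. This justifies Fubini in the verification of the marginals.
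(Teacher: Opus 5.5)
Your proof is correct and follows essentially the paper's argument. The paper likewise restricts $\mu$ to $[0,1)$ and $(1,\infty)$, sets $\tilde d=\int(b-1)\,\mu_+(\dd b)$ and scales $\mu_-$ by $\theta=\tilde d\big/\int(1-a)\,\mu_-(\dd a)$ (your $s$), and mixes the $Q_{a,b}$ against $\frac{b-a}{\tilde d}\,\theta\mu_-(\dd a)\,\mu_+(\dd b)$, with $(1-\theta)\mu_-$ and the atom at one left as point masses. You simply do the mean-one case first and reduce to it, and you also check the mixing measure's total mass and the Tonelli step, which the paper leaves implicit. Before writing it up, delete the two abandoned fragments: the first display defining $\pi$ and the unfinished definition of $c$.
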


\begin{proof}
Let $\mu_-$ and $\mu_+$ be the restrictions of $\mu$ to $[0,1)$ and
$(1,\infty)$, and put
\[
d=\int(1-a)\,\mu_-(\dd a),\qquad
\tilde d=\int(b-1)\,\mu_+(\dd b).
\]
The mean constraint gives $\tilde d\le d$.  If $\tilde d=0$, the law is supported on
$[0,1]$ and is already a mixture of point masses.  Otherwise put
$\theta=\tilde d/d$, $\widetilde\mu_-=\theta\mu_-$, and define a finite mixing
measure on pairs by
\[
\rho(\dd a,\dd b)=\frac{b-a}{\tilde d}\,
\widetilde\mu_-(\dd a)\mu_+(\dd b).
\]
Integrating the low-point mass in \eqref{eq:ad-Qab} against $\rho$ gives
$\widetilde\mu_-$, and integrating the high-point mass gives $\mu_+$.
The remaining measure $(1-\theta)\mu_-$, together with any atom at one, is a
mixture of point masses in $[0,1]$.  If the mean is one, then $d=\tilde d$ and no
point mass below one remains.
\end{proof}

For a coordinatewise nonincreasing merger, each positive-mean input may first be
rescaled upward to mean one.  Consequently, after
\cref{lem:ad-mixture}, validity can be checked on independent mean-one
two-point variables.

Write $a=\min(x,y)$ and $b=\max(x,y)$.  Direct calculation gives
\begin{equation}\label{eq:ad-K2}
K_2(a,b)=
\begin{cases}
1, & b\le1,\\[1.5mm]
1-\dfrac{(b-1)^2}{b(b-a)}, & 0\le a<1<b,\\[3mm]
\dfrac1{ab}, & 1\le a.
\end{cases}
\end{equation}
For a mixed point $0\le a<1<b$, let $\tau(a,b)$ be the larger root of
\begin{equation}\label{eq:ad-tau-poly}
t^2-b(1+a)t+ab=0,
\end{equation}
namely,
\begin{equation}\label{eq:ad-tau}
\tau(a,b)=\frac{b(1+a)+\sqrt{b^2(1+a)^2-4ab}}2.
\end{equation}
The other smaller root is below one and  note that $\tau(a,b)\geq b$, with equality when $a=0$ and strict inequality when $0<a<1<b$.

\begin{definition}[Admissible two-input improvement]\label{def:Kad}
Define $\Kad:[0,\infty)^2\to[0,1]$ symmetrically by
\begin{equation}\label{eq:Kad-def}
\Kad(a,b)=
\begin{cases}
1, & b\le1,\\[1.5mm]
\dfrac{2}{\tau(a,b)}-\dfrac{1}{\tau(a,b)^2},
   & 0\le a<1<b,\\[3mm]
\dfrac1{ab}, & 1\le a.
\end{cases}
\end{equation}
\end{definition}

The middle expression equals $K_2(0,\tau(a,b))$.  Its level sets are
particularly simple.  For $0<\alpha<1$, put
\begin{equation}\label{eq:ad-Balpha}
s_\alpha=\sqrt{1-\alpha},\qquad
B_\alpha(a)=\frac1{(1-s_\alpha)(1+a s_\alpha)}.
\end{equation}

\begin{lemma} \label{lem:ad-levelset}
For $0\le a<1<b$,
\[
\Kad(a,b)\le\alpha\quad\Longleftrightarrow\quad b\ge B_\alpha(a).
\]
Moreover, $B_\alpha$ is decreasing,
$B_\alpha(0)=1/(1-s_\alpha)$, and $B_\alpha(1)=1/\alpha$.
Consequently, $\Kad$ is symmetric and coordinatewise nonincreasing.
\end{lemma}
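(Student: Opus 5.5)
The plan is to reduce the level-set condition for $\Kad$ to a condition on $\tau(a,b)$, and then to translate that into a condition on $b$ through the quadratic \eqref{eq:ad-tau-poly}.

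\emph{Step 1: reduce to a condition on $\tau$.} Put $g(t)=2/t-1/t^2=1-(1-1/t)^2$. On $t\ge1$ this function is strictly decreasing from $1$ to $0$. Solving $g(t)=\alpha$ with $t\ge1$ gives $1-1/t=s_\alpha$, that is, $t=1/(1-s_\alpha)=:t_\alpha$. Since $\tau(a,b)\ge b>1$ at mixed points, we get
\[
\Kad(a,b)\le\alpha\iff\tau(a,b)\ge t_\alpha .
\]

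\emph{Step 2: convert the condition on $\tau$ into a condition on $b$.} The polynomial \eqref{eq:ad-tau-poly} is $p_b(t)=t^2-b(1+a)t+ab$, and $\tau$ is its larger root. For $t$ beyond the vertex, $\tau\ge t$ holds if and only if $p_b(t)\le0$. The map $b\mapsto p_b(t)=t^2-b\{(1+a)t-a\}$ is affine with negative slope, because $(1+a)t-a>0$ for $t\ge1$. So $p_b(t_\alpha)\le0$ is equivalent to
\[
b\ge \frac{t_\alpha^2}{(1+a)t_\alpha-a}=\frac{t_\alpha}{1+a(1-1/t_\alpha)}=\frac{1}{(1-s_\alpha)(1+as_\alpha)}=B_\alpha(a).
\]

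The main obstacle in this step is justifying ``$\tau\ge t\iff p_b(t)\le 0$''. The concern is that $t_\alpha$ might lie below the smaller root, where $p_b(t_\alpha)>0$ even though $\tau\ge t_\alpha$. This is ruled out because the smaller root is below one and $t_\alpha>1$. Hence $p_b(t_\alpha)\le0$ exactly when $t_\alpha$ lies between the two roots, which here is equivalent to $t_\alpha\le\tau$. Alternatively, one can argue directly that $\tau$ is strictly increasing in $b$ and then solve $\tau(a,b)=t_\alpha$ for $b$, which gives the same $B_\alpha(a)$.

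\emph{Step 3: properties of $B_\alpha$ and the consequences.} $B_\alpha$ is decreasing in $a$ because $s_\alpha>0$. At $a=0$ we get $B_\alpha(0)=1/(1-s_\alpha)$. At $a=1$ we get $B_\alpha(1)=1/(1-s_\alpha^2)=1/\alpha$.

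Symmetry of $\Kad$ holds by definition, since it is defined through $a=\min$ and $b=\max$.

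For monotonicity, consider the level sets $\{\Kad\le\alpha\}$ for every $\alpha\in(0,1)$. In each region they are up-sets in the $(a,b)$ coordinates:
\begin{itemize}[label={}]
\item on the mixed region, the set $\{b\ge B_\alpha(a)\}$ is an up-set because $B_\alpha$ is decreasing;
\item on the region $\{b\le1\}$, the value of $\Kad$ is $1$, the maximum, so moving out of this region can only lower the value;
\item on $\{a\ge1\}$, the value $1/(ab)$ is decreasing.
\end{itemize}
Continuity across $a=1$ follows from $B_\alpha(1)=1/\alpha$, which matches $1/(1\cdot b)\le\alpha\iff b\ge1/\alpha$. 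Equivalently, $\tau(1,b)=b$ gives $\Kad(1,b)=2/b-1/b^2\ge 1/b$. So passing from $a<1$ to $a\ge1$ does not increase the value; I would check this boundary comparison explicitly.

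Combining the regions, $\Kad$ is nonincreasing in $(a,b)$ whenever both coordinates are increased or one of them is. Increasing a single coordinate $x$ may swap the roles of min and max, but by symmetry and the up-set property of $\{\Kad\le\alpha\}$ in the sorted coordinates, coordinatewise monotonicity follows.
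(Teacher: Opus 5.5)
Your argument is correct and is essentially the paper's proof. You use $\Kad=1-(1-1/\tau)^2$ to get $\Kad\le\alpha\iff\tau(a,b)\ge 1/(1-s_\alpha)$. You then use that the smaller root of \eqref{eq:ad-tau-poly} lies below one (indeed $p_b(1)=1-b<0$) to convert this into $p_b(t_\alpha)\le0$, i.e.\ $b\ge B_\alpha(a)$. Your up-set argument for monotonicity spells out what the paper calls immediate.

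One side remark is false and should be deleted. It is not true that $\tau(1,b)=b$; the equality $\tau=b$ holds at $a=0$. In fact $\tau(1,b)=b+\sqrt{b^2-b}$, so $1-1/\tau(1,b)=\sqrt{1-1/b}$, and the mixed expression tends to exactly $1/b$ as $a\uparrow1$. This agrees with $B_\alpha(1)=1/\alpha$. Your claimed value $2/b-1/b^2$ would instead place the level boundary at $B_\alpha(0)\ne B_\alpha(1)$, contradicting the continuity you had just derived. Also, $\Kad(1,b)$ is $1/b$ by the third case of the definition, not the mixed formula. The up-set argument based on $B_\alpha$ decreasing and $B_\alpha(1)=1/\alpha$ already handles the crossing at $a=1$ without this remark.
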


\begin{proof}
Put $T=1/(1-s_\alpha)>1$.  Since the smaller root of
\eqref{eq:ad-tau-poly} is below one,
\[
\tau(a,b)\ge T
\quad\Longleftrightarrow\quad
T^2-b(1+a)T+ab\le0,
\]
which is equivalent to $b\ge B_\alpha(a)$.  On the other hand,
\[
\Kad(a,b)=1-\left(1-\frac1{\tau(a,b)}\right)^2,
\]
so $\Kad(a,b)\le1-s_\alpha^2$ is equivalent to $\tau(a,b)\ge T$.
The endpoint formulas and monotonicity follow immediately.  At $a=1$, the
mixed boundary agrees with the high-high boundary $ab=1/\alpha$.
\end{proof}

\begin{lemma} \label{lem:ad-dominance}
For all $x,y\ge0$, $\Kad(x,y)\le K_2(x,y)$.  The inequality is strict exactly
when $0<\min(x,y)<1<\max(x,y)$.
\end{lemma}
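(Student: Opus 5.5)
By symmetry of both $\Kad$ and $K_2$, we may write $a=\min(x,y)$ and $b=\max(x,y)$ and compare the two functions region by region. On $\{b\le1\}$ both equal one, and on $\{a\ge1\}$ both equal $1/(ab)$ by \eqref{eq:ad-K2} and \eqref{eq:Kad-def}. Hence equality holds outside the mixed region $0\le a<1<b$, and everything reduces to that region.

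On the mixed region, \eqref{eq:ad-K2} gives $K_2(a,b)=1-(b-1)^2/(b(b-a))$. The remark after \cref{def:Kad} gives $\Kad(a,b)=K_2(0,\tau)=1-(1-1/\tau)^2$ with $\tau=\tau(a,b)$. We also rewrite
\[
K_2(a,b)=1-\Bigl(1-\frac1b\Bigr)^2\frac{b}{b-a}.
\]
So $\Kad\le K_2$ is equivalent to
\[
\Bigl(1-\frac1\tau\Bigr)^2\ \ge\ \Bigl(1-\frac1b\Bigr)^2\frac{b}{b-a}.
\]
The plan is to express both sides through the quadratic \eqref{eq:ad-tau-poly}. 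Since $\tau^2-b(1+a)\tau+ab=0$, we have $ab=\tau(b(1+a)-\tau)$, which lets us eliminate $a$ or rewrite $(\tau-1)^2$. A cleaner route is to work with $g(t)=t^2-b(1+a)t+ab$. Because $\tau$ is the larger root and the smaller root is below one, and $g$ is increasing past its vertex, $\tau\ge T$ holds exactly when $g(T)\le0$ for $T>1$. Define $T^*$ as the value with $(1-1/T^*)^2=(1-1/b)^2\,b/(b-a)$; this is exactly the $\tau$-value for which $\Kad$ would equal $K_2$. Since $(1-1/t)^2$ is increasing for $t>1$, the desired inequality becomes $\tau\ge T^*$, i.e.\ $g(T^*)\le0$, with strictness iff $g(T^*)<0$.

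Write $r=\sqrt{b/(b-a)}\ge1$, so that $1-1/T^*=r(b-1)/b$ and $T^*=b/(b-r(b-1))$; the denominator is positive because $K_2>0$ on the mixed region. We then substitute into $g$. Clearing denominators leaves a polynomial inequality in $a,b,r$ subject to the constraint $r^2(b-a)=b$. We use that constraint to replace $a=b(1-1/r^2)$, which turns the condition into a one-variable-family inequality in $(r,b)$ with $r\ge1$ and $b>1$. The main obstacle is this algebraic step: factorizing $g(T^*)$ to exhibit a sign. We expect a factor $(r-1)$, since $r=1$ iff $a=0$, where $\tau=b=T^*$. We therefore aim to show
\[
g(T^*)=-(r-1)\cdot(\text{positive expression in } r,b).
\]
If a direct factorization is messy, the fallback is to check the equivalent inequality in the form
\[
K_2(a,b)-\Kad(a,b)=\frac{1}{\tau^2}\Bigl[(\tau-1)^2-\frac{\tau^2(b-1)^2}{b(b-a)}\Bigr]
\]
and use $\tau^2=b(1+a)\tau-ab$ to reduce the bracket to an expression linear in $\tau$. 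One then compares against $\tau\ge b$, with strict inequality for $a>0$, as noted after \eqref{eq:ad-tau}.

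For the strictness claim: at $a=0$ we have $\tau=b$, so $\Kad=K_2(0,b)$ and equality holds. For $0<a<1<b$ the factor $(r-1)$, respectively the strict bound $\tau>b$, is nonzero, giving strict inequality. Combined with the equality outside the mixed region, this shows the inequality is strict exactly on $0<\min(x,y)<1<\max(x,y)$.
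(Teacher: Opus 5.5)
Your reduction is correct. The two rules agree off the mixed quadrant. For $0\le a<1<b$, the inequality $\Kad\le K_2$ is equivalent to $\tau\ge T^*$, hence to $g(T^*)\le0$, because $g(1)=1-b<0$ puts $1$ strictly between the roots and $T^*>1$. The latter holds since $b(b-a)-(b-1)^2=(2-a)b-1>0$, so $K_2>0$. But the proposal stops exactly where the content of the lemma lies. The sign of $g(T^*)$ is only \emph{expected}, so nothing has yet been proved on the mixed quadrant, and the strictness claim rests on a factor you never exhibit. The fallback does not fill this in, and as described it fails. Reducing with $\tau^2=b(1+a)\tau-ab$ gives
\[
K_2(a,b)-\Kad(a,b)=\frac{(1-a)\bigl[(ab-1)\tau+b(1-a)\bigr]}{\tau^2(b-a)}.
\]
When $ab<1$ (for instance $a=1/4$, $b=2$), the bracket is decreasing in $\tau$. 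The lower bound $\tau\ge b$ then points the wrong way: it bounds the bracket from above, not below, so it gives neither nonnegativity nor strictness.

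Both routes close with one more computation. In your main route, put $D=b-r(b-1)$ and use $a=b(r^2-1)/r^2$. Then $1-D=(b-1)(r-1)$, $D-b=-r(b-1)$ and $r-(r+1)D=-r^2(1-a)$, which combine to
\[
g(T^*)=-\frac{b^2(b-1)(1-a)\,r\,(r-1)}{\bigl(b-r(b-1)\bigr)^2}.
\]
This is negative exactly when $r>1$, i.e.\ $a>0$, so your anticipated factor $(r-1)$ is indeed there and strictness follows. In the fallback, eliminate $b$ rather than bounding $\tau$. Substituting $b=\tau^2/((1+a)\tau-a)$ turns the bracket into $a\tau(\tau-1)^2/((1+a)\tau-a)$. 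This is the paper's argument: it produces the factored gap \eqref{eq:ad-gap-factor}, from which sign and strictness are read off at once.

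The paper's parametrization by $\tau$ is the more economical one. Yours, once completed, is a level-set comparison, $K_2(a,b)=K_2(0,T^*)$ versus $\Kad(a,b)=K_2(0,\tau)$, in the spirit of \cref{lem:ad-levelset}.
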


\begin{proof}
Only the mixed quadrant requires calculation.  Write $t=\tau(a,b)$.  From
\eqref{eq:ad-tau-poly},
\[
b=\frac{t^2}{(1+a)t-a}.
\]
A direct simplification yields
\begin{equation}\label{eq:ad-gap-factor}
K_2(a,b)-\Kad(a,b)
=\frac{a(1-a)(t-1)^2}{t(b-a)((1+a)t-a)}.
\end{equation}
This is nonnegative and is positive precisely for $0<a<1<b$.
\end{proof}

\begin{figure}[t]
\centering
\includegraphics[width=.48\textwidth]{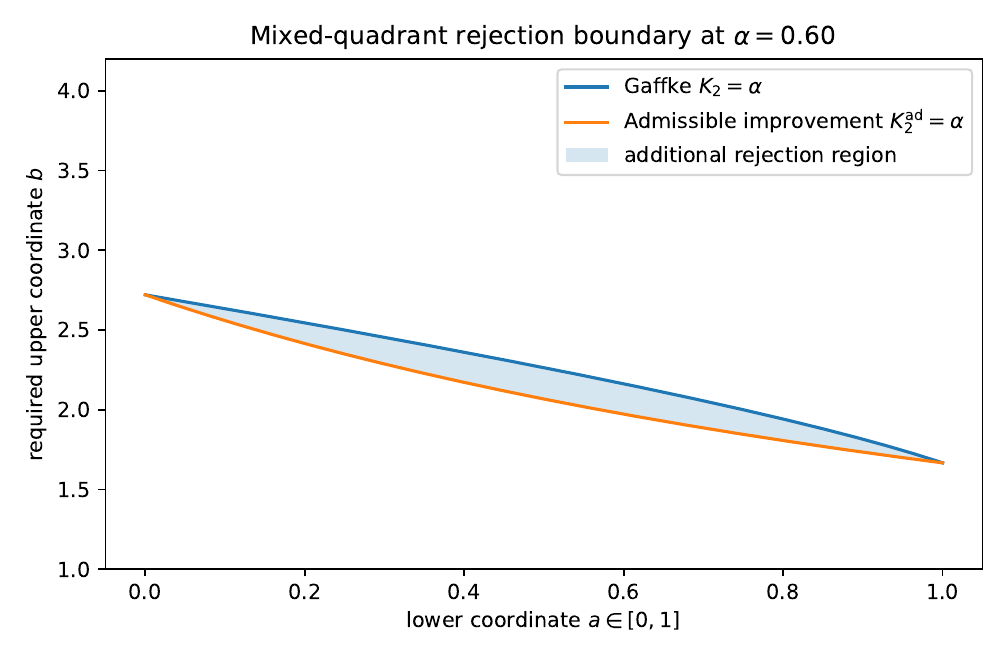}
\includegraphics[width=.48\textwidth]{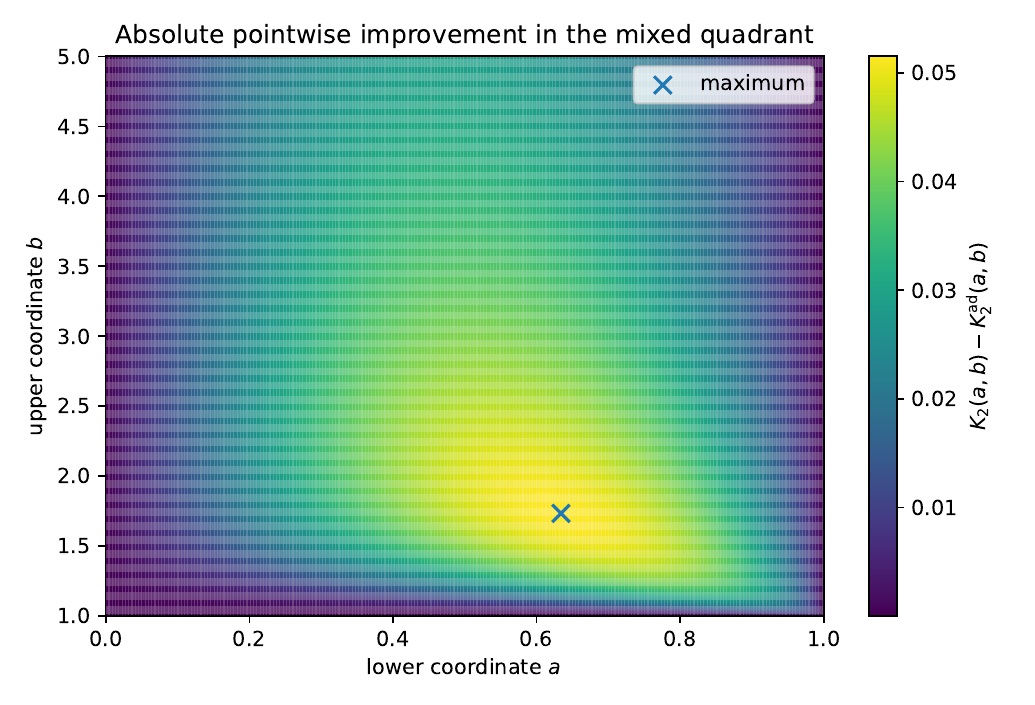}
\caption{Left: at a fixed level, $\Kad$ rejects on a strictly larger part of
the mixed quadrant, with the two boundaries agreeing at $a=0$ and $a=1$.
Right: the pointwise reduction $K_2-\Kad$ in the mixed quadrant.}
\label{fig:admissibility-geometry}
\end{figure}

Now we are ready to prove the validity of $\Kad$.   
\begin{theorem}\label{thm:Kad-validity}
For every pair of independent e-variables $E_1,E_2$,
\[
\Pp\{\Kad(E_1,E_2)\le\alpha\}\le\alpha,
\qquad 0\le\alpha\le1.
\]
That is, $\Kad$ is a merger. 
\end{theorem}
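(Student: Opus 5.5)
The plan is to reduce validity to a finite optimization over independent mean-one two-point laws, and then to verify a single explicit inequality at each level $\alpha$ using the level-set description in \cref{lem:ad-levelset}.

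\textbf{Reduction to two-point laws.} The cases $\alpha=1$ and $\alpha=0$ are trivial, the latter because $\Kad>0$ everywhere. So fix $0<\alpha<1$ and let $R_\alpha=\{(x,y):\Kad(x,y)\le\alpha\}$. By \cref{lem:ad-levelset}, $\Kad$ is symmetric and coordinatewise nonincreasing, so $R_\alpha$ is an upper set. The only way to make $\Kad$ an increasing function of the mean is to push mass upward, so each input may first be made to have mean exactly one: replace $E_i$ by $E_i/\E[E_i]$ when the mean is positive, and by $1$ when it is zero. This only enlarges the rejection event. The map $(\mu,\nu)\mapsto(\mu\otimes\nu)(R_\alpha)$ is bilinear, and by \cref{lem:ad-mixture} each mean-one law is a mixture of $\delta_1$ and the laws $Q_{a,b}$. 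Hence it suffices to bound $(\mu\otimes\nu)(R_\alpha)$ when each of $\mu,\nu$ is either $\delta_1$ or some $Q_{a,b}$.

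\textbf{Cases involving $\delta_1$.} If $\mu=\delta_1$ and $\nu=Q_{c,d}$, then $\Kad(1,d)=1/d$ by the high-high branch, and $\Kad(1,c)=1$ since $\max(1,c)\le1$. So rejection has probability at most $\nu(\{d\})\,\one\{d\ge1/\alpha\}=\frac{1-c}{d-c}\one\{d\ge 1/\alpha\}\le 1/d\le\alpha$. This is the Markov case, and $\delta_1\otimes\delta_1$ never rejects.

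\textbf{Two genuine two-point laws.} Take $X\sim Q_{a,b}$ and $Y\sim Q_{c,d}$ with $0\le a,c<1<b,d$, and write $p=\frac{1-a}{b-a}$, $q=\frac{1-c}{d-c}$. The low-low atom never rejects. The rejection probability is
\[
pq\,\one\{bd\ge1/\alpha\}+p(1-q)\,\one\{b\ge B_\alpha(c)\}+(1-p)q\,\one\{d\ge B_\alpha(a)\}.
\]
I would split according to which indicators are active.
\begin{itemize}[leftmargin=*]
\item \emph{Only high-high active.} The probability is $pq\le 1/(bd)\le\alpha$.
\item \emph{One mixed indicator active.} Suppose, say, $b\ge B_\alpha(c)$. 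Then $b\ge B_\alpha(c)\ge B_\alpha(1)=1/\alpha$, and also $bd\ge b>1/\alpha$, so high-high is automatically active as well. In that subcase the probability is $p\le 1/b\le\alpha$.
\item \emph{Both mixed indicators active.} The probability is $p+q-pq=1-(1-p)(1-q)$, subject to $b\ge B_\alpha(c)$ and $d\ge B_\alpha(a)$.
\end{itemize}

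\textbf{The main obstacle.} The hard step is the last case, i.e.\ showing $(1-p)(1-q)\ge 1-\alpha=s_\alpha^2$. Since $1-p=\frac{b-1}{b-a}$ increases in $b$, the worst case is $b=B_\alpha(c)$ and $d=B_\alpha(a)$. Then $1-p=\frac{B_\alpha(c)-1}{B_\alpha(c)-a}$, and the task becomes the two-variable inequality
\[
\frac{B_\alpha(c)-1}{B_\alpha(c)-a}\cdot\frac{B_\alpha(a)-1}{B_\alpha(a)-c}\ \ge\ s_\alpha^2,\qquad a,c\in[0,1).
\]
Substituting the explicit formula for $B_\alpha$ from \eqref{eq:ad-Balpha}, i.e.\ $B_\alpha(c)=1/\bigl((1-s)(1+cs)\bigr)$ with $s=s_\alpha$, I first compute $B_\alpha(c)-1=\frac{s(1-(1-s)c)}{(1-s)(1+cs)}$. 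I would then clear denominators and try to factor the difference between the two sides as a nonnegative product. My expectation is that the threshold $\tau$ in \eqref{eq:ad-tau-poly} was engineered so that equality holds along a curve: for instance $a=c$, or one of $a,c$ equal to $0$, which is consistent with the boundaries agreeing at $a=0$ and $a=1$. If direct factoring is messy, the fallback is to fix $a$ and show that the left side, as a function of $c$, is minimized at an endpoint or at $c=a$, using log-concavity or a sign analysis of the derivative, and then to check those one-variable cases by hand.
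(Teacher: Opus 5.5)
Your reduction and case split are correct and match the paper's proof: rescaling to mean one, the mixture lemma, Markov for the $\delta_1$ components, and the split by the number of rejected side states. Your boundary substitution $b=B_\alpha(c)$, $d=B_\alpha(a)$ reproduces exactly the paper's bound \eqref{eq:ad-r1bound}. The gap is that the final two-variable inequality, which is the whole content of the hard case, is stated only as an expectation. Your guess about where it is tight is also wrong. Equality holds only when $ac=0$, not along $a=c$: at $s=a=c=1/2$ the left side exceeds $s^2$ by a factor of about $1.19$. A derivative analysis looking for minima at $c=a$ would therefore be chasing the wrong target.

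The missing step is a short factorization, which is the identity the paper uses. Put $u=1-s$. Then
\[
\frac{B_\alpha(c)-1}{B_\alpha(c)-a}=\frac{s(1-cu)}{1-au(1+cs)}.
\]
The denominator is positive because $u(1+cs)<(1-s)(1+s)<1$ and $a<1$. So you need $N\ge D$, where $N=(1-au)(1-cu)$ and $D=\bigl(1-au(1+cs)\bigr)\bigl(1-cu(1+as)\bigr)$. Expanding gives
\[
N-D=acsu\,\bigl[2-u(a+c+acs)\bigr].
\]
This is nonnegative because $a+c+acs<2+s$ and $(1-s)(2+s)=2-s-s^2<2$. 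Hence $(1-p)(1-q)\ge s^2=1-\alpha$, which closes the last case and completes your argument along the paper's route.
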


\begin{proof}
The cases $\alpha=0$ and $\alpha=1$ are trivial. Hence fix
$0<\alpha<1$.

By \cref{lem:ad-levelset}, $\Kad$ is coordinatewise nonincreasing.  Let
\[
m_i=\E[E_i]\le1,\qquad i=1,2,
\]
and define
\[
\widetilde E_i=
\begin{cases}
E_i/m_i, & m_i>0,\\[1mm]
1, & m_i=0.
\end{cases}
\]
If $m_i=0$, then $E_i=0$ almost surely because $E_i$ is nonnegative.
Consequently, in all cases,
\[
\widetilde E_i\ge E_i\quad\text{almost surely},
\qquad
\E[\widetilde E_i]=1.
\]
The variables $\widetilde E_1,\widetilde E_2$ remain independent.  Since
$\Kad$ is coordinatewise nonincreasing,
\[
\Kad(\widetilde E_1,\widetilde E_2)
\le
\Kad(E_1,E_2)
\quad\text{almost surely}.
\]
Therefore
\begin{equation}\label{eq:Kad-rescale-inclusion}
\{\Kad(E_1,E_2)\le\alpha\}
\subseteq
\{\Kad(\widetilde E_1,\widetilde E_2)\le\alpha\}.
\end{equation}
It is thus enough to prove the result when both marginal means are exactly
one.

By \cref{lem:ad-mixture}, each mean-one marginal law is a mixture of laws in
the class
\[
\mathcal C
=
\{\delta_1\}
\cup
\{Q_{a,b}:0\le a<1<b\},
\]
where
\[
Q_{a,b}
=
\frac{b-1}{b-a}\,\delta_a
+
\frac{1-a}{b-a}\,\delta_b.
\]
Since the two marginals are independent, their joint law is a mixture of
product laws $C_1\otimes C_2$ with $C_1,C_2\in\mathcal C$.  It therefore
suffices to prove
\begin{equation}\label{eq:Kad-component-validity}
(C_1\otimes C_2)
\{(x,y):\Kad(x,y)\le\alpha\}
\le\alpha
\end{equation}
for every fixed pair $C_1,C_2\in\mathcal C$.

We first handle the degenerate components.  For every $x\ge0$,
\begin{equation}\label{eq:Kad-neutral-K1}
\Kad(1,x)
=
K_1(x)
=
\min\{1,1/x\}.
\end{equation}
Indeed, if $x\le1$, then $\Kad(1,x)=1$, while if $x>1$, then
$\Kad(1,x)=1/x$.  Hence, if $C_1=\delta_1$ and $X_2\sim C_2$, then
$\E[X_2]=1$ and
\[
\Pp\{\Kad(1,X_2)\le\alpha\}
=
\Pp\{X_2\ge1/\alpha\}
\le
\alpha\E[X_2]
=
\alpha
\]
by Markov's inequality.  The same argument applies if $C_2=\delta_1$.
This also includes the case $C_1=C_2=\delta_1$.

It remains to consider
\[
C_i=Q_{a_i,b_i},
\qquad
0\le a_i<1<b_i,
\qquad i=1,2.
\]
Let $X_1,X_2$ be independent with these laws, and write
\[
p_i=\Pp(X_i=b_i)
=\frac{1-a_i}{b_i-a_i},
\qquad
r_i=\Pp(X_i=a_i)
=1-p_i
=\frac{b_i-1}{b_i-a_i}.
\]
Thus the four possible states are
\[
(a_1,a_2),\qquad
(b_1,a_2),\qquad
(a_1,b_2),\qquad
(b_1,b_2).
\]
Set
\(
s=\sqrt{1-\alpha}\in(0,1).
\)
The low--low state is not rejected, since
\(
\Kad(a_1,a_2)=1>\alpha.
\)
Moreover, because $\Kad$ is coordinatewise nonincreasing, the rejection set
among the four states is upward closed.
We distinguish the possible numbers of rejected side states
$(b_1,a_2)$ and $(a_1,b_2)$.

If neither side state is rejected, only the high-high state can be rejected,
and
\[
p_1p_2\le\frac1{b_1b_2}=\Kad(b_1,b_2)\le\alpha.
\]
Suppose exactly one side state, say $(b_1,a_2)$, is rejected.  Then
$b_1\ge B_\alpha(a_2)$.  Since
\[
\alpha B_\alpha(a)=\frac{1+s}{1+as}\ge1,
\]
we have $p_1\le1/b_1\le\alpha$.  The high-high state is also rejected, so the
total rejection probability is $p_1$.

It remains to consider the case in which both side states are rejected.  The
side conditions are
\begin{equation}\label{eq:ad-two-side}
b_1(1-s)(1+a_2s)\ge1,\qquad
b_2(1-s)(1+a_1s)\ge1.
\end{equation}
Using $b_i=(1-a_ir_i)/(1-r_i)$, the first inequality implies
\begin{equation}\label{eq:ad-r1bound}
r_1\ge s\frac{1-a_2(1-s)}{1-a_1(1-s)(1+a_2s)},
\end{equation}
and symmetrically for $r_2$.  Multiplying the two bounds, the factor
multiplying $s^2$ is at least one because
\begin{align*}
&[1-a_2(1-s)][1-a_1(1-s)] -[1-a_1(1-s)(1+a_2s)]
       [1-a_2(1-s)(1+a_1s)]\\
&=a_1a_2s(1-s)
  [2-(1-s)(a_1+a_2+a_1a_2s)]\ge0.
\end{align*}
Indeed, $a_1+a_2+a_1a_2s\le2+s$ and
$(1-s)(2+s)=2-s-s^2<2$.  Hence $r_1r_2\ge s^2$.  All states except low-low
are rejected, and their total probability is
$1-r_1r_2\le1-s^2=\alpha$.
The mixture reduction completes the proof.
\end{proof}

\subsection{The unique admissible dominator}

We next establish that $\Kad$ is admissible and it is the optimal improvement of $K_2$. 
\begin{theorem} \label{thm:Kad-optimality}
Let $F:[0,\infty)^2\to[0,1]$ be any e-to-p merger satisfying
$F\le K_2$.  Then
$
F\ge\Kad$.
Consequently, $\Kad$ is admissible, and it is the unique admissible merger among
all valid mergers that dominate $K_2$.
\end{theorem}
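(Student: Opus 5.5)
The plan is to prove the envelope inequality $F\ge\Kad$ pointwise by contradiction, region by region. At each point where $F<\Kad$ we will exhibit independent mean-one e-variables, built from the two-point laws $Q_{a,b}$ of \cref{lem:ad-mixture}, whose rejection probability at some level exceeds that level. The only information we use about $F$ is validity together with $F\le K_2$; in particular $F$ need not be monotone. So every state we want counted as rejected must be controlled by an explicit value of $K_2$ from \eqref{eq:ad-K2}, and we use no monotonicity of $F$.

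\emph{Low region ($\max(x,y)\le1$).} Here $\Kad=1$. If $F(x,y)=\beta<1$, take the deterministic e-variables $E_1=x$ and $E_2=y$. Then $\Pp\{F(E_1,E_2)\le\beta\}=1>\beta$, which contradicts validity.

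\emph{High region ($1\le a\le b$).} Here $\Kad(a,b)=1/(ab)$. If $F(a,b)=\beta<1/(ab)$, take $E_1\sim Q_{0,a}$ and $E_2\sim Q_{0,b}$ independent (if $a=1$, use $\delta_1$ instead of $Q_{0,1}$). The state $(a,b)$ has probability $1/(ab)>\beta$ and lies in $\{F\le\beta\}$, again contradicting validity.

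\emph{Mixed region ($0\le a<1<b$).} By symmetry, consider the point $(b,a)$ and suppose $F(b,a)=\beta<\Kad(a,b)$. Put $s=\sqrt{1-\beta}$ and $T=1/(1-s)$, so that $K_2(0,T)=\beta$. Take independent $X_1\sim Q_{0,b}$ and $X_2\sim Q_{a,T}$; note $T>1$. Each of the three non-low states has $F\le\beta$:
\begin{itemize}
\item at $(b,a)$ this holds by assumption;
\item at $(0,T)$ we have $F\le K_2(0,T)=\beta$;
\item at $(b,T)$ we have $F\le K_2(b,T)=1/(bT)$, and $1/(bT)\le\beta$ because $1/(\beta T)=1/(1+s)<1<b$.
\end{itemize}
Hence
\[
\Pp\{F(X_1,X_2)\le\beta\}\ge 1-\Big(1-\frac1b\Big)\frac{T-1}{T-a}.
\]
The key step is to show that this exceeds $1-s^2=\beta$. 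Equivalently,
\[
(b-1)(T-1)<s^2\,b\,(T-a).
\]
Substituting $T=1/(1-s)$ should reduce this to $b(1-s)(1+as)<1$, that is, $b<B_\beta(a)$. By \cref{lem:ad-levelset}, $b<B_\beta(a)$ is exactly the hypothesis $\Kad(a,b)>\beta$. Verifying this algebraic reduction is the main obstacle. The construction is tuned to the equality case of the two-side-state inequality in the proof of \cref{thm:Kad-validity}: there the factor multiplying $s^2$ equals one when $a_1a_2=0$, which is why $X_1$ uses low point $0$. If the reduction does not come out cleanly, I would instead let the upper atom $c$ of $X_2$ be a free parameter and optimize it.

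Combining the three regions gives $F\ge\Kad$ everywhere.

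\emph{Admissibility and uniqueness.} Let $G$ be a merger with $G\le\Kad$. Then $G\le K_2$ by \cref{lem:ad-dominance}, so the envelope gives $G\ge\Kad$, hence $G=\Kad$; thus $\Kad$ has no strict dominator. Now let $H$ be any admissible merger with $H\le K_2$. The envelope gives $H\ge\Kad$, and $\Kad$ is valid by \cref{thm:Kad-validity}. If $H\ne\Kad$, then $\Kad$ would strictly dominate $H$, contradicting the admissibility of $H$. Therefore $H=\Kad$.
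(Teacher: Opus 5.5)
Your argument is essentially the paper's proof. It uses the same three-region split, and in the mixed quadrant the same witness: a two-point law on $\{0,b\}$ for the large coordinate, and a two-point law on $\{a,T\}$ with $K_2(0,T)=\beta$ for the small one. Your reduction to $b(1-s)(1+as)<1$, i.e.\ $b<B_\beta(a)$, is equivalent to the paper's observation that $t^2-b(1+a)t+ab>0$ for $t>\tau(a,b)$, and the algebra does go through as you expected.

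The only omission is the case $\beta=0$ in the mixed quadrant. There $s=1$ and $T=1/(1-s)$ is undefined, so your construction does not apply. Dispose of it as the paper does: any product null with an atom at $(b,a)$, e.g.\ $Q_{0,b}\otimes\delta_a$, already violates validity at level zero.
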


\begin{proof}
If $x,y\le1$, the deterministic null distribution at $(x,y)$ forces
$F(x,y)=1=\Kad(x,y)$.  If $x,y\ge1$, take independent variables that equal
$x$ and $y$ with probabilities $1/x$ and $1/y$, respectively, and equal zero
otherwise; a coordinate equal to one may be taken deterministically one.  The
atom at $(x,y)$ has probability $1/(xy)=\Kad(x,y)$, so validity forces
$F(x,y)\ge1/(xy)$.

Now fix $0\le a<1<b$ and write
$c=\Kad(a,b)$ and $\tau=\tau(a,b)$.  Suppose that
$q:=F(a,b)<c$.  If $q=0$, any independent mean-one two-point system having
$(a,b)$ as an atom contradicts validity at level zero, so assume $q>0$.
For $t>1$,
\begin{equation}\label{eq:ad-K0t}
K_2(0,t)=\frac2t-\frac1{t^2}
\end{equation}
strictly decreases from one to zero.  Hence there is a unique $t_q>\tau$ such
that $K_2(0,t_q)=q$.  Let
\[
X\in\{a,t_q\},\qquad Y\in\{0,b\}
\]
be independent mean-one variables.  The event $\{F(X,Y)\le q\}$ contains all
states except $(a,0)$: at $(a,b)$, $F=q$; at $(t_q,0)$,
$F\le K_2(t_q,0)=q$; and at $(t_q,b)$,
$F\le1/(bt_q)<1/t_q<q$.  Its probability is therefore at least
\begin{equation}\label{eq:ad-pi}
\pi(t_q)=1-\frac{(b-1)(t_q-1)}{b(t_q-a)}.
\end{equation}
For every $t>1$,
\begin{equation}\label{eq:ad-witness-factor}
\pi(t)-K_2(0,t)
=\frac{(t-1)[t^2-b(1+a)t+ab]}{bt^2(t-a)}.
\end{equation}
Since $t_q$ lies above the larger root $\tau$, this difference is positive.
Thus
\[
\Pp\{F(X,Y)\le q\}\ge\pi(t_q)>K_2(0,t_q)=q,
\]
a contradiction.  Hence $F(a,b)\ge c$.

We have proved $F\ge\Kad$.  Since $\Kad$ is valid by
\cref{thm:Kad-validity} and satisfies $\Kad\le K_2$, any valid merger below
$\Kad$ must equal it.  The claimed admissibility and uniqueness follow.
\end{proof}

The proof gives a strong pointwise least-favorable-witness property: for every
point $z$ and every $q<\Kad(z)$, one can construct a product null distribution
under which adjoining $z$ to the level-$q$ rejection region raises its
probability above $q$.  This is why the result does not require competing
mergers to be symmetric or monotone.

\begin{corollary}\label{cor:golden-cap}
For $0\le a<1<b$, let
\[
\mathcal U_{a,b}=\{(x,y):\min(x,y)\ge a,\ \max(x,y)\ge b\}.
\]
The rule obtained by replacing $K_2$ with
$\min\{K_2,\Kad(a,b)\}$ on $\mathcal U_{a,b}$ is valid, and no smaller cap can
lower the value at $(a,b)$ while remaining valid and pointwise below $K_2$.
In particular,
\[
\Kad(1/2,2)=\frac{\sqrt5-1}{2}.
\]
\end{corollary}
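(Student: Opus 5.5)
Three things need checking: validity of the capped rule, optimality of the cap, and the numerical value at $(1/2,2)$.

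\emph{Validity.} Call the capped rule $G=\min\{K_2,\Kad(a,b)\}$ on $\mathcal U_{a,b}$ and $G=K_2$ elsewhere. I would show $G\ge\Kad$ pointwise, so that $\{G\le\alpha\}\subseteq\{\Kad\le\alpha\}$ and validity follows from \cref{thm:Kad-validity}. Off $\mathcal U_{a,b}$ we have $G=K_2\ge\Kad$ by \cref{lem:ad-dominance}. On $\mathcal U_{a,b}$, take a point $(x,y)$ with $\min(x,y)\ge a$ and $\max(x,y)\ge b$. Since $\Kad$ is symmetric and coordinatewise nonincreasing (\cref{lem:ad-levelset}), $\Kad(x,y)\le\Kad(a,b)$. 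Combined with $\Kad(x,y)\le K_2(x,y)$ this gives $\Kad(x,y)\le G(x,y)$. Measurability of $G$ is clear, since $\mathcal U_{a,b}$ is closed.

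\emph{Optimality.} Let $F$ be any valid rule with $F\le K_2$ that lowers the value at $(a,b)$, meaning $F(a,b)<\Kad(a,b)$. This contradicts \cref{thm:Kad-optimality}, which gives $F\ge\Kad$ everywhere. So no valid rule below $K_2$ can take a value smaller than $\Kad(a,b)$ at $(a,b)$, and the cap is optimal. Note that the cap really does lower $(a,b)$ when $0<a$, by the strictness part of \cref{lem:ad-dominance}.

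\emph{Numerical value.} At $a=1/2$, $b=2$, equation \eqref{eq:ad-tau-poly} becomes $t^2-3t+1=0$. Its larger root is $\tau=(3+\sqrt5)/2=\varphi^2$, where $\varphi=(1+\sqrt5)/2$. Then
\[
\Kad=1-(1-1/\tau)^2 .
\]
We have $1/\tau=(3-\sqrt5)/2$, so $1-1/\tau=(\sqrt5-1)/2=1/\varphi$. Hence $\Kad=1-1/\varphi^2=1/\varphi=(\sqrt5-1)/2$, using $1-\varphi^{-2}=\varphi^{-1}$.

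The only point needing care is the monotonicity step in the validity argument. It handles both orderings of the coordinates in $\mathcal U_{a,b}$ through symmetry, and the rest is direct.
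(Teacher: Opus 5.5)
Your proof is correct and matches the paper's argument: validity comes from $G\ge\Kad$, obtained from the symmetry and monotonicity of $\Kad$, together with \cref{thm:Kad-validity}. Sharpness comes from \cref{thm:Kad-optimality}, and the value comes from the larger root $(3+\sqrt5)/2$ of $t^2-3t+1$. Your simplification $1-\varphi^{-2}=\varphi^{-1}$ just spells out arithmetic the paper leaves implicit.
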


\begin{proof}
By monotonicity, $\Kad(x,y)\le\Kad(a,b)$ on $\mathcal U_{a,b}$, while
$\Kad\le K_2$ everywhere.  The capped rule is therefore pointwise at least
$\Kad$ and is valid.  Sharpness follows from \cref{thm:Kad-optimality}.
At $(a,b)=(1/2,2)$, the polynomial in \eqref{eq:ad-tau-poly} is
$t^2-3t+1$, whose larger root is $(3+\sqrt5)/2$.
\end{proof}

\begin{proposition}\label{prop:ad-maxgap}
We have the following maximum improvement:
\[
\max_{(x,y)\in[0,\infty)^2}\{K_2(x,y)-\Kad(x,y)\}=\frac{2\sqrt3-3}{9}.
\]
The maximum occurs, up to permutation, at
\[
a_*=(3-\sqrt3)/2,\qquad b_*=\sqrt3.
\]
\end{proposition}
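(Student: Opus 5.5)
The plan is to work with the explicit factorization \eqref{eq:ad-gap-factor} from \cref{lem:ad-dominance}, and to change variables from $(a,b)$ to $(a,t)$ with $t=\tau(a,b)$. By \cref{lem:ad-dominance} the gap vanishes outside the open mixed region $0<\min<1<\max$, so only $0<a<1<b$ matters. In that region the map $b\mapsto\tau(a,b)$ is a bijection onto $t>1$, with inverse $b=t^2/((1+a)t-a)$ read off from \eqref{eq:ad-tau-poly}. Substituting this into $b-a$ gives
\[
(b-a)\bigl((1+a)t-a\bigr)=t^2-a(1+a)t+a^2,
\]
so the gap becomes the rational function
\[
G(a,t)=\frac{a(1-a)(t-1)^2}{t\,\bigl(t^2-a(1+a)t+a^2\bigr)},\qquad 0<a<1<t.
\]
The denominator is positive there, since it equals $(b-a)((1+a)t-a)$.

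First I would check the boundary behaviour, so that the supremum is an interior maximum. $G\to0$ as $a\downarrow0$, as $a\uparrow1$, and as $t\downarrow1$ (which corresponds to $b\downarrow1$). As $t\to\infty$, $G=O(1/t)$. All of these limits are uniform in the other variable on compact subranges, and near corners the bound $G\le a(1-a)(t-1)^2/(t(t-a)(t-a^2))$-type estimates suffice. Hence $G$ attains a positive maximum at a critical point in the open region.

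Next I would solve $\partial_aG=\partial_tG=0$.
\begin{itemize}
\item For fixed $t$, set $u=a(1-a)$ and note $a^2-a(1+a)t=-u\,t$ plus terms in $a^2(1-t)$. It is perhaps cleaner to differentiate $\log G$ directly. This gives the equation $(1-2a)/(a(1-a))=\partial_a D/D$, with $D=t^2-a(1+a)t+a^2$.
\item The $t$-equation is $2/(t-1)=1/t+\partial_tD/D$.
\item Both equations are polynomial after clearing denominators, and I would eliminate $D$ between them. The $t$-equation gives $D$ linearly in terms of $a$ and $t$. Substituting this into the $a$-equation should produce a low-degree polynomial system.
\end{itemize}
I expect that system to reduce to the claimed point. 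To do this I would convert back to $b$ at the end, using $b=t^2/((1+a)t-a)$, and verify that $b_*=\sqrt3$ and $a_*=(3-\sqrt3)/2$ satisfy the system. Finally I would evaluate $K_2(a_*,b_*)-\Kad(a_*,b_*)$ using \eqref{eq:ad-K2} and \eqref{eq:Kad-def}, and confirm that it equals $(2\sqrt3-3)/9$.

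The main obstacle is the elimination step: showing that the critical point is unique, not just exhibiting it. If the elimination in $(a,t)$ is messy, I would instead work in $(a,b)$ with $K_2-\Kad=(1-1/\tau)^2-(b-1)^2/(b(b-a))$. I would differentiate implicitly using $\partial\tau$ from \eqref{eq:ad-tau-poly}, and look for a resultant that factors with $(b^2-3)$ as the only admissible factor in $b>1$. Uniqueness of the critical point, combined with the boundary analysis above, then identifies it as the global maximum. The permutation clause follows from the symmetry of $K_2$ and $\Kad$.
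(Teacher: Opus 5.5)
Your change of variables is correct and is essentially the paper's own: the paper sets $s=1-1/\tau(a,b)$, so your $G(a,t)$ is its $\Delta(a,s)$ with $t=1/(1-s)$. The gap is the step you yourself call the main obstacle. You never solve $\partial_aG=\partial_tG=0$; you only propose to check that $(a_*,b_*)$ satisfies it. That shows $(a_*,b_*)$ is \emph{a} critical point. Your ``interior maximum plus unique critical point'' argument also needs that there is no other critical point in $0<a<1<t$. For this the proposal offers only the expectation that an elimination, or a resultant in $(a,b)$ with $b^2-3$ as its only relevant factor, will work out. Locating the maximizer is the whole content of the proposition, so the argument is incomplete as written.

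The missing computation is short, and it is cleanest as a nested maximization, which is what the paper does. Clearing denominators in your $a$-equation gives
\[
(1-2a)D-a(1-a)\,\partial_aD=(2t-1)a^2-2t^2a+t^2=(t-a)\bigl(t-(2t-1)a\bigr).
\]
So for each fixed $t>1$ the sign of $\partial_aG$ is that of $t-(2t-1)a$. Hence $G(\cdot,t)$ has a unique maximizer $a=t/(2t-1)\in(1/2,1)$, which is the paper's $a=1/(1+s)$. Substituting gives the profile $G\bigl(t/(2t-1),t\bigr)=(t-1)^2/\bigl(t^2(4t-3)\bigr)$. Its logarithmic derivative is $-2(2t^2-6t+3)/\bigl(t(t-1)(4t-3)\bigr)$, so the unique maximizer on $t>1$ is $t=(3+\sqrt3)/2$. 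This gives $a_*=(3-\sqrt3)/2$, $b_*=t^2/((1+a_*)t-a_*)=\sqrt3$, and the value $1/\bigl(3(3+2\sqrt3)\bigr)=(2\sqrt3-3)/9$.

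If you prefer your simultaneous route, the cleared $t$-equation is $t^3-3t^2+a(2+a)t-a^2=0$. Substituting $a=t/(2t-1)$ into it yields $(t-1)(2t^2-6t+3)=0$, which has the single root $(3+\sqrt3)/2$ on $t>1$. This gives the same uniqueness.

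The nested form also makes your compactness argument unnecessary. If you keep it, use the identity $D=(t-a)(t-a^2)+a^2(1-a)$. It gives $G\le a(1-a)/t$ and $G\le(t-1)^2/(at)$, which turn your sketched boundary limits into rigorous uniform bounds.
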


\begin{proof}
Only the mixed quadrant matters.  Put
$s=1-1/\tau(a,b)\in(0,1)$.  Then $\Kad=1-s^2$ and
$b=1/[(1-s)(1+as)]$.  Substitution into \eqref{eq:ad-gap-factor} yields
\[
\Delta(a,s)=\frac{a(1-a)s^2(1-s)}{1-a(1-s)(1+as)}.
\]
For fixed $s$,
\[
\frac{\partial\Delta}{\partial a}
=\frac{s^2(1-s)(1-a+as)[1-a(1+s)]}
       {[1-a(1-s)(1+as)]^2},
\]
so the unique maximizer is $a=1/(1+s)$.  There
\[
\Delta_{\max}(s)=\frac{s^2(1-s)}{1+3s},\qquad
\Delta_{\max}'(s)=-\frac{2s(3s^2-1)}{(1+3s)^2}.
\]
The maximum occurs at $s=1/\sqrt3$, which gives the stated value and point.
\end{proof}

\section{Higher-dimensional consequences and admissibility criteria}
\label{sec:higher-admissibility}

We next extend the observation on admissibility made in the previous section for $n=2$ to general $n\ge 2$. 

First, a coordinate equal to one is neutral: from \eqref{eq:Kexp},
\begin{equation}\label{eq:ad-neutral}
K_n(x_1,\ldots,x_m,1,\ldots,1)=K_m(x_1,\ldots,x_m).
\end{equation}
This allows the exact two-input improvement to be embedded in every higher
dimension.

For $x\in[0,\infty)^n$, let
\[
I(x)=\{i:x_i\ne1\},\qquad m(x)=|I(x)|,
\]
and let $x_{I(x)}$ be the vector of nonneutral coordinates.  Define
\begin{equation}\label{eq:Kbar-def}
\overline K_n(x)=
\begin{cases}
1, & m(x)=0,\\
K_1(x_{I(x)}), & m(x)=1,\\
\Kad(x_{I(x)}), & m(x)=2,\\
K_n(x), & m(x)\ge3.
\end{cases}
\end{equation}

\begin{theorem} 
\label{thm:Kbar-validity}
For every $n\ge2$, $\overline K_n$ is a valid  e-to-p merger and
$\overline K_n\le K_n$, with strict inequality at, for example,
$(a,b,1,\ldots,1)$ whenever $0<a<1<b$.  Consequently, $K_n$ is inadmissible
for every $n\ge2$ under the unrestricted pointwise definition.
\end{theorem}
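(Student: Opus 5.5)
The plan is to prove validity by disintegrating the null law into simple product components, using \cref{lem:ad-mixture} directly rather than first rescaling to mean one. The rescaling step used in \cref{thm:Kad-validity} is unavailable here, because $\overline K_n$ is not coordinatewise monotone. Moving a coordinate from $1$ to something slightly different changes $m(x)$ and can switch the active formula, so the inclusion \eqref{eq:Kad-rescale-inclusion} has no analogue. I expect this to be the main obstacle, and the whole argument is arranged to avoid it.

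\emph{Step 1 (dominance and strictness).} I check this first, since it is purely deterministic. If $m(x)=0$, both sides equal one. If $m(x)=1$ or $m(x)=2$, the neutrality identity \eqref{eq:ad-neutral} gives $K_n(x)=K_m(x_{I(x)})$. For $m=1$ this makes $\overline K_n(x)=K_1(x_{I(x)})$ equal to $K_n(x)$. For $m=2$, \cref{lem:ad-dominance} gives $\overline K_n(x)=\Kad(x_{I(x)})\le K_2(x_{I(x)})=K_n(x)$. If $m(x)\ge3$, the two rules coincide. At $(a,b,1,\ldots,1)$ with $0<a<1<b$ we have $m=2$, and \cref{lem:ad-dominance} gives strict inequality. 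Hence inadmissibility of $K_n$ follows once validity is established.

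\emph{Step 2 (mixture reduction).} Let $E_1,\ldots,E_n$ be independent e-variables with laws $\mu_i$ of mean at most one. By \cref{lem:ad-mixture}, each $\mu_i$ is a mixture of components from the class of point masses $\delta_c$ with $0\le c\le1$ together with the two-point laws $Q_{a,b}$, $0\le a<1<b$. By independence, the joint law is a mixture of product laws $C_1\otimes\cdots\otimes C_n$ with each $C_i$ in this class. The mixing measures in the lemma are explicit, so measurability is routine. It therefore suffices to show $(C_1\otimes\cdots\otimes C_n)\{\overline K_n\le\alpha\}\le\alpha$ for each fixed product.

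\emph{Step 3 (the key observation).} On the support of any fixed product component, $m(x)$ is constant. Indeed, a $Q_{a,b}$ coordinate takes only the values $a<1<b$, so it is never neutral. A $\delta_c$ coordinate is neutral exactly when $c=1$, and then it is neutral deterministically. Let $J$ be the set of coordinates whose component is not $\delta_1$, and let $m=|J|$. Then $m(x)=m$ almost surely on that component, and the coordinates outside $J$ are identically one. The cases follow from earlier results.
\begin{itemize}
\item If $m\ge3$, then $\overline K_n=K_n$ on the support. Each component coordinate is an e-variable (mean one for $Q_{a,b}$, mean at most one for $\delta_c$), so \cref{thm:VT} applies.
\item If $m=2$, then $\overline K_n(X)=\Kad(X_J)$, and \cref{thm:Kad-validity} applies to the two independent e-variables $X_J$.
\item If $m=1$, then $\overline K_n(X)=\min\{1,1/X_j\}$, and Markov's inequality applies.
\item If $m=0$, the value is one and the bound is trivial.
\end{itemize}
Integrating these component bounds against the mixing measure gives $\Pp\{\overline K_n(\mathbf E)\le\alpha\}\le\alpha$.

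The only delicate points are that \cref{lem:ad-mixture} allows point masses $\delta_c$ with $c<1$ when the mean is below one, and that such a constant coordinate counts as non-neutral. Both are covered above, because \cref{thm:VT}, \cref{thm:Kad-validity} and Markov's inequality all allow means at most one.
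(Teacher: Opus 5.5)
Your proof is correct and follows essentially the same route as the paper. The paper also gets the pointwise comparison from the neutral-coordinate identity \eqref{eq:ad-neutral} and \cref{lem:ad-dominance}. It then applies \cref{lem:ad-mixture} directly to each input law (no rescaling), observes that the number $r$ of non-$\delta_1$ components fixes the number of nonneutral coordinates on every realization, and invokes the trivial rule, $K_1$ (Markov), \cref{thm:Kad-validity}, or \cref{thm:VT} accordingly before averaging over the latent components.
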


\begin{proof}
The pointwise comparison follows from \eqref{eq:ad-neutral} and
\cref{lem:ad-dominance}.  For validity, apply \cref{lem:ad-mixture}
independently to each input law and condition on all latent mixture
components.  Conditional on those components, each coordinate is either
deterministic at some $a\le1$ or follows a nondegenerate mean-one two-point
law whose support lies strictly below and above one.  Let $r$ be the number of
conditional components that are not $\delta_1$.  Every realization then has
exactly $r$ nonneutral coordinates.

If $r\le2$, \eqref{eq:Kbar-def}, after deleting neutral coordinates equal to one, is the
valid zero-, one-, or two-input rule $1$, $K_1$, or $\Kad$.  If $r\ge3$, the
rule equals $K_n$ on every conditional realization and is valid by
\cref{thm:VT}.  The conditional rejection probability is therefore at most
$\alpha$ in every latent component, and averaging proves validity.
\end{proof}

\begin{proposition} \label{prop:ad-faceoptimal}
Let $F_n$ be any e-to-p merger satisfying $F_n\le K_n$.  On every neutral
two-dimensional face,
\[
F_n(1,\ldots,1,\underset{i}{x},1,\ldots,1,
    \underset{j}{y},1,\ldots,1)\ge\Kad(x,y).
\]
Hence \eqref{eq:Kbar-def} makes the largest possible pointwise improvement on
the union of neutral faces having at most two nonneutral coordinates.
\end{proposition}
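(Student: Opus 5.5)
The plan is to reduce the face statement to the two-input envelope argument of \cref{thm:Kad-optimality}, by fixing every coordinate outside $\{i,j\}$ deterministically at one. Define the section
\[
G(x,y)=F_n(1,\ldots,1,\underset{i}{x},1,\ldots,1,\underset{j}{y},1,\ldots,1).
\]
First check that $G$ is a two-input e-to-p merger. If $X,Y$ are independent e-variables, then adjoining the constants $1$ in the remaining $n-2$ coordinates gives an independent family of e-variables. Validity of $F_n$ therefore yields $\Pp\{G(X,Y)\le\alpha\}\le\alpha$ for every $\alpha\in[0,1]$. Borel measurability of $G$ is inherited from $F_n$.

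Second, check that $G\le K_2$. By the neutrality identity \eqref{eq:ad-neutral} and the symmetry of $K_n$, the value of $K_n$ at the embedded point is $K_2(x,y)$. Hence $G(x,y)\le K_n(\cdots)=K_2(x,y)$ pointwise. Now \cref{thm:Kad-optimality} applies directly to $G$. That theorem requires no symmetry or monotonicity of the competitor, which matters here because $F_n$ is arbitrary. It gives $G\ge\Kad$, which is the displayed inequality.

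Third, for the ``hence'' clause, I would combine three facts.
\begin{itemize}
\item \cref{thm:Kbar-validity} shows that $\overline K_n$ is a valid merger below $K_n$.
\item On faces with exactly two nonneutral coordinates, $\overline K_n$ equals $\Kad$, which is the lower bound just proved.
\item On faces with at most one nonneutral coordinate, the same argument applies with the second coordinate also fixed at one. Then $y=1$, and by \eqref{eq:Kad-neutral-K1} the bound becomes $F_n\ge\Kad(x,1)=K_1(x)$, with the value $1$ when $m=0$. Alternatively, one can invoke Markov sharpness directly. In either case $\overline K_n$ attains the bound.
\end{itemize}
Hence no valid merger below $K_n$ can be smaller than $\overline K_n$ anywhere on this union.

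I expect no real obstacle. The only points needing care are:
\begin{itemize}
\item the measurability and validity of the section $G$, which is immediate because constants equal to one are e-variables;
\item the fact that the orientation of the face, meaning which positions hold $x$ and $y$, is irrelevant, because \cref{thm:Kad-optimality} handles non-symmetric $F$.
\end{itemize}
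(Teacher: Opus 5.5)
Your proposal is correct and follows the paper's proof: fix the coordinates outside $\{i,j\}$ deterministically at one, note via \eqref{eq:ad-neutral} that the resulting section is a valid two-input merger bounded by $K_2$, and apply \cref{thm:Kad-optimality}. You handle faces with at most one nonneutral coordinate through $\Kad(x,1)=K_1(x)$ or Markov sharpness, which matches the paper's appeal to the admissibility of $K_1$.
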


\begin{proof}
Feed constants equal to one into all coordinates other than $i,j$.  The
restriction of $F_n$ is a valid two-input merger and, by
\eqref{eq:ad-neutral}, is no larger than $K_2$.  Apply
\cref{thm:Kad-optimality}.  The one-dimensional statement follows similarly
from the admissibility of $K_1$.
\end{proof}

The rule $\overline K_n$ is symmetric, but for $n\ge3$ it is not globally
coordinatewise nonincreasing because it changes only exact neutral faces.  We
do not know an admissible envelope in the genuine $n$-dimensional interior,
nor a globally monotone strict improvement of $K_n$ for every $n\ge3$.

\subsection{Product-null capacity and local admissibility}
\label{sec:local-admissibility}

Let $\mathcal P_n$ be the class of product laws on $[0,\infty)^n$ whose
marginals have means at most one.  For a Borel set $A$, define its
independent-e-value capacity by
\begin{equation}\label{eq:ad-capacity}
\capn_n(A)=\sup_{P\in\mathcal P_n}P(A).
\end{equation}
For a rule $F$, write
\[
R_F(\alpha)=\{x:F(x)\le\alpha\}.
\]
The extreme-component decomposition in \cref{lem:ad-mixture} immediately
reduces this capacity to finite-support product laws.  Namely, if
\[
\mathcal C_1=
\{\delta_a:0\le a\le1\}
\cup
\{Q_{a,b}:0\le a<1<b\},
\]
where $Q_{a,b}$ is defined in \eqref{eq:ad-Qab}, then
\begin{equation}\label{eq:ad-capacity-extreme}
\capn_n(A)
=
\sup_{C_1,\ldots,C_n\in\mathcal C_1}
(C_1\otimes\cdots\otimes C_n)(A).
\end{equation}
Indeed, every product null is a mixture of the product laws on the right, and
every such product law belongs to $\mathcal P_n$.

\begin{proposition}\label{prop:ad-capacity}
A Borel function $F:[0,\infty)^n\to[0,1]$ is an e-to-p merger if and only if
\[
\capn_n(R_F(\alpha))\le\alpha
\qquad\text{for every }\alpha\in[0,1].
\]
Moreover, $G\le F$ if and only if
$R_F(\alpha)\subseteq R_G(\alpha)$ for every $\alpha$.
\end{proposition}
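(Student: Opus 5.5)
Both assertions follow directly from the definitions; the only real content is matching the supremum in \eqref{eq:ad-capacity} with the quantifier ``for every collection of independent e-variables'' in \eqref{eq:merger-def}.

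For the first assertion, I would observe that a vector $(E_1,\ldots,E_n)$ of independent e-variables is exactly a random vector whose joint law lies in $\mathcal P_n$. Independence makes the joint law a product of its marginals, and each marginal is supported on $[0,\infty)$ with mean at most one. Conversely, given any $P\in\mathcal P_n$, the coordinate projections on the probability space $([0,\infty)^n,P)$ form independent e-variables. For independent e-variables with law $P$ we have
\[
\Pp\{F(E_1,\ldots,E_n)\le\alpha\}=P(R_F(\alpha)).
\]
Hence \eqref{eq:merger-def} holds for all such collections and a given $\alpha$ if and only if $\sup_{P\in\mathcal P_n}P(R_F(\alpha))\le\alpha$, which is the statement $\capn_n(R_F(\alpha))\le\alpha$. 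Borel measurability of $F$ guarantees that $R_F(\alpha)$ is a Borel set, so the capacity is defined. No use of \eqref{eq:ad-capacity-extreme} is needed here. It is only a computational reduction and could be mentioned as a remark.

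For the second assertion, I would argue pointwise. If $G\le F$ and $x\in R_F(\alpha)$, then $G(x)\le F(x)\le\alpha$, so $x\in R_G(\alpha)$. Conversely, suppose the inclusions hold for every $\alpha\in[0,1]$, and fix $x$. Taking $\alpha=F(x)\in[0,1]$ gives $x\in R_F(F(x))\subseteq R_G(F(x))$, so $G(x)\le F(x)$.

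There is no genuine obstacle. The only step needing care is the measure-theoretic identification in the first part: the supremum must range over exactly the joint laws realizable by independent e-variables, and every element of $\mathcal P_n$ must be realizable, which is handled by the canonical coordinate construction.
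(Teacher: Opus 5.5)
Your proposal is correct and follows the paper's approach: the paper's proof is the one-line remark that both assertions are direct rewritings of the p-value property and of pointwise domination. Your write-up fills in the same reasoning, namely the identification of laws of independent e-variables with $\mathcal P_n$ for the first part and the choice $\alpha=F(x)$ for the converse of the second.
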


\begin{proof}
Both assertions are direct rewritings of, respectively, the p-value property
and pointwise domination.
\end{proof}

The next result turns admissibility into a local property.  It says that any
strict domination can already be detected by lowering the rule at a single
point.

\begin{theorem}
\label{thm:ad-local-capacity}
Let $F$ be an e-to-p merger.  Then $F$ is
admissible if and only if, for every $x\in[0,\infty)^n$ and every
$t\in[0,F(x))$, there exists some $\alpha\in[t,F(x))$ such that
\begin{equation}\label{eq:ad-local-capacity}
\capn_n\bigl(R_F(\alpha)\cup\{x\}\bigr)>\alpha.
\end{equation}
\end{theorem}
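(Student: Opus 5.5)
Both directions will be proved through the set-level description in \cref{prop:ad-capacity}: validity means $\capn_n(R_F(\alpha))\le\alpha$ for every $\alpha$, and pointwise domination means nesting of rejection regions.

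\emph{Necessity (admissible $\Rightarrow$ the local condition).} We argue by contraposition. Suppose there is a point $x$ and a level $t<F(x)$ such that
\[
\capn_n\bigl(R_F(\alpha)\cup\{x\}\bigr)\le\alpha \quad\text{for every }\alpha\in[t,F(x)).
\]
Define $G$ to equal $F$ everywhere except at $x$, and set $G(x)=t$. Then $G$ is Borel, $G\le F$, and $G(x)<F(x)$. To check validity, note that $R_G(\alpha)=R_F(\alpha)$ when $\alpha<t$ or $\alpha\ge F(x)$. For $\alpha\in[t,F(x))$ we instead have $R_G(\alpha)=R_F(\alpha)\cup\{x\}$, whose capacity is at most $\alpha$ by hypothesis. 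By \cref{prop:ad-capacity}, $G$ is a merger that strictly dominates $F$, so $F$ is inadmissible.

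\emph{Sufficiency (local condition $\Rightarrow$ admissible).} Again argue by contraposition. Suppose $G$ is a merger with $G\le F$ and $G(x)<F(x)$ at some point $x$. Put $t=G(x)$ and take any $\alpha\in[t,F(x))$. Then $x\in R_G(\alpha)$. Since $G\le F$, we also have $R_F(\alpha)\subseteq R_G(\alpha)$, so $R_F(\alpha)\cup\{x\}\subseteq R_G(\alpha)$. Capacity is monotone under inclusion, being a supremum of measures, so
\[
\capn_n\bigl(R_F(\alpha)\cup\{x\}\bigr)\le\capn_n(R_G(\alpha))\le\alpha .
\]
This holds for every $\alpha$ in the interval, so the local condition fails at this pair $(x,t)$.

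\emph{Expected obstacle.} I expect no real obstacle; the only care needed is bookkeeping. One must get the interval endpoints right: $\alpha=t$ is included and $\alpha=F(x)$ is excluded, matching where the rejection regions of $G$ and $F$ actually differ. One should also note that the one-point modification keeps $G$ Borel with values in $[0,1]$. Finally, the equivalence uses validity of $F$ only for levels outside $[t,F(x))$, and validity is a hypothesis of the theorem.
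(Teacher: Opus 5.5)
Your proof is correct and follows the paper's argument: one direction is the same one-point modification $F_{x,t}$, checked level by level through \cref{prop:ad-capacity}. In the other direction you use the inclusion $R_F(\alpha)\cup\{x\}\subseteq R_G(\alpha)$ directly, where the paper sandwiches $G\le F_{x,t}\le F$ and notes that increasing a p-value preserves validity; this is the same monotonicity of capacity, stated slightly more directly.
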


\begin{proof}
Fix $x$ and $t\in[0,F(x))$, and define the one-point modification
\[
F_{x,t}(y)=
\begin{cases}
 t,&y=x,\\
 F(y),&y\ne x.
\end{cases}
\]
Its rejection regions agree with those of $F$ except that
\[
R_{F_{x,t}}(\alpha)=R_F(\alpha)\cup\{x\},
\qquad t\le\alpha<F(x).
\]
Consequently, if \eqref{eq:ad-local-capacity} fails for every
$\alpha\in[t,F(x))$, then $F_{x,t}$ is valid by
\cref{prop:ad-capacity}.  It strictly dominates $F$, so $F$ is
inadmissible.

Conversely, suppose that a valid $G\le F$ is strict at some $x$, and put
$t=G(x)<F(x)$.  The one-point modification $F_{x,t}$ satisfies
$G\le F_{x,t}\le F$.  Since increasing a p-value preserves validity,
$F_{x,t}$ is valid.  Hence
\[
\capn_n\bigl(R_F(\alpha)\cup\{x\}\bigr)
=
\capn_n(R_{F_{x,t}}(\alpha))
\le\alpha
\]
for every $\alpha\in[t,F(x))$, contradicting
\eqref{eq:ad-local-capacity}.
\end{proof}

There is an analogous exact criterion if one restricts attention to
coordinatewise nonincreasing competitors.  Define the upper cone
\[
\uparrow x=\{y\in[0,\infty)^n:y_i\ge x_i\text{ for every }i\}.
\]

\begin{corollary} 
\label{cor:ad-monotone-capacity}
Let $F$ be an e-to-p merger and coordinatewise nonincreasing.  Then $F$ is admissible
among coordinatewise nonincreasing rules if and only if, for every $x$ and
every $t\in[0,F(x))$, there is an $\alpha\in[t,F(x))$ such that
\begin{equation}\label{eq:ad-monotone-capacity}
\capn_n\bigl(R_F(\alpha)\cup\uparrow x\bigr)>\alpha.
\end{equation}
For symmetric coordinatewise nonincreasing competitors, $\uparrow x$ is
replaced by the union of the upper cones generated by all permutations of
$x$.
\end{corollary}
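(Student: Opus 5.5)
The plan is to follow the proof of \cref{thm:ad-local-capacity} almost verbatim, replacing the one-point modification by a monotone \emph{cone modification}. Fix $x$ and $t\in[0,F(x))$, and define
\[
F^{\uparrow}_{x,t}(y)=
\begin{cases}
\min\{F(y),t\},& y\in\uparrow x,\\
F(y),& y\notin\uparrow x.
\end{cases}
\]
The first step is to check that $F^{\uparrow}_{x,t}$ is coordinatewise nonincreasing. The set $\uparrow x$ is upward closed and $F$ is nonincreasing. Hence if $y\le y'$, either both points lie in the cone, or only $y'$ does, or neither does, and in each case $F^{\uparrow}_{x,t}(y')\le F^{\uparrow}_{x,t}(y)$. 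Measurability is clear.

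The second step is to compute the rejection regions:
\[
R_{F^{\uparrow}_{x,t}}(\alpha)=R_F(\alpha)\cup\uparrow x \quad\text{for } t\le\alpha<F(x),
\]
and $R_{F^{\uparrow}_{x,t}}(\alpha)=R_F(\alpha)$ for $\alpha<t$. For $\alpha\ge F(x)$, monotonicity gives $\uparrow x\subseteq R_F(\alpha)$, so the regions again coincide. Consequently, if \eqref{eq:ad-monotone-capacity} fails for every $\alpha\in[t,F(x))$, then \cref{prop:ad-capacity} shows that $F^{\uparrow}_{x,t}$ is valid. It is monotone and strictly below $F$ at $x$, so $F$ is not admissible among monotone rules.

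For the converse, suppose $G\le F$ is a valid, coordinatewise nonincreasing rule with $G(x)<F(x)$, and put $t=G(x)$. On $\uparrow x$, monotonicity of $G$ gives $G(y)\le G(x)=t$. Together with $G\le F$, this yields $G\le F^{\uparrow}_{x,t}\le F$.

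The key point, which is the only mildly delicate step, is that validity passes upward: $R_{F^{\uparrow}_{x,t}}(\alpha)\subseteq R_G(\alpha)$, so capacities are at most $\alpha$. Therefore $\capn_n(R_F(\alpha)\cup\uparrow x)\le\alpha$ for all $\alpha\in[t,F(x))$, which contradicts \eqref{eq:ad-monotone-capacity}.

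For the symmetric version, suppose $F$ is symmetric. We replace $\uparrow x$ by $\bigcup_\sigma\uparrow(\sigma x)$ throughout. This set is upward closed and permutation invariant, so the modification stays symmetric and monotone. A symmetric monotone $G$ with $G(x)=t$ satisfies $G\le t$ on every permuted cone, so the same two directions go through unchanged.
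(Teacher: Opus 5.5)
Your proposal is correct and follows the paper's proof: your cone modification $F^{\uparrow}_{x,t}$ is exactly the paper's $H_{x,t}$, and both directions are the argument of \cref{thm:ad-local-capacity} with that modification in place of the one-point change. You also make explicit several steps the paper leaves implicit: the monotonicity of the modification, the agreement of rejection regions for $\alpha\ge F(x)$ (which uses monotonicity of $F$), and the need for $F$ itself to be symmetric in the permuted-cone version.
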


\begin{proof}
The smallest coordinatewise nonincreasing modification that lowers $F(x)$
to $t$ is
\[
H_{x,t}(y)=
\begin{cases}
\min\{F(y),t\},&y\in\uparrow x,\\
F(y),&y\notin\uparrow x.
\end{cases}
\]
For $t\le\alpha<F(x)$ its rejection region is
$R_F(\alpha)\cup\uparrow x$; outside that range it agrees with
$R_F(\alpha)$.  If a monotone $G\le F$ satisfies $G(x)=t$, then
$G\le H_{x,t}$ because $G(y)\le t$ on $\uparrow x$.  The proof of
\cref{thm:ad-local-capacity} therefore applies verbatim.  Symmetry forces the
same lowering on every permuted upper cone.
\end{proof}

\subsection{Self-calibration and least-favorable witnesses}

For a valid rule $F$, define its worst-case size function
\begin{equation}\label{eq:ad-size-function}
c_F(\alpha)=\capn_n(R_F(\alpha)),
\qquad 0\le\alpha\le1.
\end{equation}
The function $c_F$ is nondecreasing and satisfies $c_F(\alpha)\le\alpha$.

\begin{proposition} 
\label{prop:ad-self-calibration}
The function
\begin{equation}\label{eq:ad-calibrated-rule}
F^{\rm cal}(x)=c_F(F(x))
\end{equation}
is an e-to-p merger and satisfies $F^{\rm cal}\le F$.  Consequently, admissibility of
$F$ implies
\begin{equation}\label{eq:ad-self-calibration}
c_F(F(x))=F(x)
\qquad\text{for every }x.
\end{equation}
\end{proposition}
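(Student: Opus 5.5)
The plan is to check three things in turn: $F^{\rm cal}\le F$, validity of $F^{\rm cal}$ (including Borel measurability), and then the admissibility consequence. The comparison is immediate. Since $F$ is valid, $c_F(\alpha)\le\alpha$ for every $\alpha$ by \cref{prop:ad-capacity}, so $F^{\rm cal}(x)=c_F(F(x))\le F(x)$. Measurability is also easy: $c_F$ is nondecreasing, hence Borel, so $F^{\rm cal}$ is a composition of Borel maps and takes values in $[0,1]$.

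For validity I would use the capacity criterion of \cref{prop:ad-capacity}. Fix $\beta\in[0,1]$ and set
\[
A=\{\alpha\in[0,1]:c_F(\alpha)\le\beta\}.
\]
By monotonicity of $c_F$, $A$ is an interval $[0,\alpha^*)$ or $[0,\alpha^*]$, with $\alpha^*=\sup A$. Then $R_{F^{\rm cal}}(\beta)=\{x:F(x)\in A\}$.

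If $A=[0,\alpha^*]$, this set is exactly $R_F(\alpha^*)$, and its capacity is $c_F(\alpha^*)\le\beta$, as required.

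If $A=[0,\alpha^*)$, then $R_{F^{\rm cal}}(\beta)=\bigcup_k R_F(\alpha_k)$ for some increasing sequence $\alpha_k\uparrow\alpha^*$. For any fixed $P\in\mathcal P_n$, continuity of measure from below gives
\[
P\Bigl(\bigcup_k R_F(\alpha_k)\Bigr)=\lim_k P(R_F(\alpha_k))\le\sup_k c_F(\alpha_k)\le\beta.
\]
Taking the supremum over $P$ shows the capacity of $R_{F^{\rm cal}}(\beta)$ is at most $\beta$. This left-limit case is the one delicate step: the capacity is a supremum and need not be continuous from below, but evaluating a single fixed $P$ before taking the supremum sidesteps the issue. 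The case $A=\emptyset$ is trivial.

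For the admissibility consequence, suppose $F$ is admissible. $F^{\rm cal}$ is a valid merger with $F^{\rm cal}\le F$, so it cannot be a strict dominator, which forces $F^{\rm cal}=F$ pointwise. That is exactly \eqref{eq:ad-self-calibration}.
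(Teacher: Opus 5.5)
Your proposal is correct and follows essentially the same route as the paper: you split on whether $\sup A$ lies in the downward interval $A$, and in the open case you bound each fixed product null by continuity from below before taking the supremum. Your explicit check of Borel measurability is a small addition that the paper leaves implicit.
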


\begin{proof}
Only validity requires proof.  Fix a product null $P$ and
$\beta\in[0,1]$.  Since $c_F$ is nondecreasing, the set
\[
A_\beta=\{s\in[0,1]:c_F(s)\le\beta\}
\]
is a downward interval.  Let $r=\sup A_\beta$, with the usual conventions if
$A_\beta$ is empty or all of $[0,1]$.

If $r\in A_\beta$, then
\[
\{c_F(F)\le\beta\}\subseteq\{F\le r\},
\]
so its $P$-probability is at most $c_F(r)\le\beta$.  If
$r\notin A_\beta$, then
\[
\{c_F(F)\le\beta\}\subseteq\{F<r\},
\]
and therefore
\[
P\{c_F(F)\le\beta\}
\le
\sup_{s<r}P\{F\le s\}
\le
\sup_{s<r}c_F(s)
\le\beta.
\]
Thus $F^{\rm cal}$ is valid.  Since $c_F(s)\le s$, it satisfies
$F^{\rm cal}\le F$.  If strict inequality held anywhere, it would be a
strict dominator.
\end{proof}

Condition \eqref{eq:ad-self-calibration} is not sufficient.  Gaffke's rule
is exactly calibrated at every numerical level, despite being inadmissible
for every $n\ge2$.

\begin{proposition} 
\label{prop:ad-K-exact-calibration}
For every $n\ge1$ and every $\alpha\in[0,1]$,
\begin{equation}\label{eq:ad-K-capacity-exact}
\capn_n\{x:K_n(x)\le\alpha\}=\alpha.
\end{equation}
Consequently, the self-calibration operation
\eqref{eq:ad-calibrated-rule} leaves $K_n$ unchanged.
\end{proposition}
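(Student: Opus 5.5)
The plan is to prove the two inequalities $\capn_n\{K_n\le\alpha\}\le\alpha$ and $\capn_n\{K_n\le\alpha\}\ge\alpha$ separately. The first is exactly the validity statement of \cref{thm:VT}, rewritten through \cref{prop:ad-capacity}. The real content is the lower bound. For $\alpha\in\{0,1\}$ it is trivial: capacity is nonnegative, and the point mass at $(0,\ldots,0)$ lies in $\{K_n\le1\}$, which is all of $[0,\infty)^n$. So fix $0<\alpha<1$.

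For the lower bound I would exhibit one product null law that puts mass exactly $\alpha$ on the rejection region. The natural witness comes from the one-dimensional Markov extremal law together with the neutrality identity \eqref{eq:ad-neutral}. Let $X_1$ take the value $1/\alpha$ with probability $\alpha$ and the value $0$ otherwise, and set $X_2=\cdots=X_n=1$ deterministically. This is a product law with all means equal to one, so it belongs to $\mathcal P_n$.

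By \eqref{eq:ad-neutral}, on this support we have $K_n(x_1,1,\ldots,1)=K_1(x_1)=\min\{1,1/x_1\}$. Hence
\[
K_n(1/\alpha,1,\ldots,1)=\alpha\le\alpha,
\qquad
K_n(0,1,\ldots,1)=1>\alpha,
\]
and the law assigns probability exactly $\alpha$ to $R_{K_n}(\alpha)$. This gives $\capn_n(R_{K_n}(\alpha))\ge\alpha$, and with the upper bound, equality \eqref{eq:ad-K-capacity-exact} follows for every $\alpha$.

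It remains to check that self-calibration leaves $K_n$ unchanged. The size function is $c_{K_n}(\alpha)=\alpha$ for all $\alpha\in[0,1]$, so $K_n^{\rm cal}(x)=c_{K_n}(K_n(x))=K_n(x)$, using that $K_n$ takes values in $[0,1]$.

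There is no serious obstacle here. The only step needing care is confirming that \eqref{eq:ad-neutral} holds at the boundary point with $x_1=0$; this is immediate from \eqref{eq:Kexp}, since a coordinate equal to one contributes $(1-1)E_i=0$. If one wanted a witness with no neutral coordinates, an alternative is the product of $n$ independent two-point laws, each taking the value $\alpha^{-1/n}$ with probability $\alpha^{1/n}$ and $0$ otherwise. Only the all-high atom $(\alpha^{-1/n},\ldots,\alpha^{-1/n})$ is then guaranteed rejected, with $K_n=1/\prod_i x_i=\alpha$ there, and it has probability $\alpha$. Either witness suffices, and I would present the first for simplicity.
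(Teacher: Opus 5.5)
Your proof is correct and follows essentially the same route as the paper: the upper bound comes from validity (\cref{thm:VT}), and the lower bound uses the witness $E_1\in\{0,1/\alpha\}$ with $E_2=\cdots=E_n=1$ together with the neutral-coordinate identity \eqref{eq:ad-neutral}. Your alternative all-coordinates witness with atoms at $\alpha^{-1/n}$ also works, and it is a nice touch, though the paper does not need it.
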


\begin{proof}
Validity gives the upper bound.  For $0<\alpha<1$, let $E_1$ equal
$1/\alpha$ with probability $\alpha$ and zero otherwise, and let
$E_2=\cdots=E_n=1$ deterministically.  By the neutral-coordinate identity
\eqref{eq:ad-neutral},
\[
K_n(E_1,1,\ldots,1)=K_1(E_1).
\]
The value is $\alpha$ on the high atom and one on the low atom.  Hence the
level-$\alpha$ rejection probability is exactly $\alpha$.  The endpoint
levels are immediate.
\end{proof}

A convenient sufficient condition points in the opposite direction.

\begin{proposition} 
\label{prop:ad-witness}
Suppose that for every $x$ and every $q\in[0,F(x))$ there is a product null
$P_{x,q}\in\mathcal P_n$ such that
\begin{equation}\label{eq:ad-witness}
P_{x,q}\bigl(R_F(q)\cup\{x\}\bigr)>q.
\end{equation}
Then $F$ is admissible.
\end{proposition}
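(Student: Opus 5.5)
The plan is to derive the statement from the local criterion in \cref{thm:ad-local-capacity}, since the witness hypothesis is a pointwise lower bound for the quantity appearing there. Fix $x\in[0,\infty)^n$ and $t\in[0,F(x))$. We must exhibit some $\alpha\in[t,F(x))$ with $\capn_n(R_F(\alpha)\cup\{x\})>\alpha$. The natural choice is $\alpha=t$ itself, and we apply the hypothesis with $q=t$, which is allowed because $t<F(x)$. The hypothesis then supplies a product null $P_{x,t}\in\mathcal P_n$. By the definition \eqref{eq:ad-capacity}, capacity is a supremum over $\mathcal P_n$, so
\[
\capn_n\bigl(R_F(t)\cup\{x\}\bigr)\ge P_{x,t}\bigl(R_F(t)\cup\{x\}\bigr)>t .
\]
Thus \eqref{eq:ad-local-capacity} holds with $\alpha=t$. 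Since $x$ and $t$ were arbitrary, \cref{thm:ad-local-capacity} gives admissibility.

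To guard against any subtlety in invoking that theorem, I would also state the direct argument, which only repeats its converse half. Suppose a valid $G\le F$ has $G(x)=t<F(x)$ at some point $x$. Then $R_G(t)\supseteq R_F(t)\cup\{x\}$, using $G\le F$ and $G(x)\le t$. Hence
\[
P_{x,t}\{G\le t\}\ge P_{x,t}\bigl(R_F(t)\cup\{x\}\bigr)>t,
\]
which contradicts the validity of $G$ at level $t$.

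There is essentially no obstacle here. The only points to check are the following.
\begin{itemize}
\item The level $\alpha=t$ lies in the admissible range $[t,F(x))$, which is immediate.
\item The case $q=0$ is covered, because the hypothesis is stated for all $q\in[0,F(x))$.
\item No measurability issue arises, because adjoining a single point to a Borel set keeps it Borel.
\end{itemize}
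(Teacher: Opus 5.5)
Your proof is correct, and your direct argument is exactly the paper's proof: a strict valid dominator $G$ with $G(x)=q<F(x)$ has $R_G(q)\supseteq R_F(q)\cup\{x\}$, so the witness $P_{x,q}$ violates validity of $G$ at level $q$. The detour through \cref{thm:ad-local-capacity} with $\alpha=t$ is also fine, but that theorem is stated for valid $F$, which the proposition does not explicitly assume, so the direct argument is the cleaner route.
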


\begin{proof}
If a valid $G\le F$ were strict at $x$, put $q=G(x)<F(x)$.  The rejection
region $R_G(q)$ contains both $R_F(q)$ and $x$, so
\eqref{eq:ad-witness} contradicts validity of $G$.
\end{proof}

The proof of \cref{thm:Kad-optimality} verifies this criterion for $\Kad$.
The contrast between \cref{prop:ad-K-exact-calibration} and the
inadmissibility of $K_n$ shows that inadmissibility is geometric rather than
an unused global type-I-error budget: every level is saturated, but not every
point on or above a level boundary is essential to a least-favorable product
null.

\subsection{Randomized mergers and a full-dimensional improvement}
\label{sec:randomized-mergers}

We now allow one auxiliary random variable
$U\sim\Unif(0,1)$, independent of all inputs.  Randomized improvements of
Markov's inequality are studied systematically by
\citet{ramdas2026randomized}.

\begin{definition}[Randomized   e-to-p merger]
A Borel function
\[
Q:[0,\infty)^n\times[0,1]\to[0,1]
\]
is a   randomized   e-to-p merger if
\[
\Pp\{Q(E_1,\ldots,E_n;U)\le\alpha\}\le\alpha
\]
for every product null and every $\alpha\in[0,1]$.  It dominates a
deterministic rule $F$ if $Q(x;u)\le F(x)$ for every $x$ and for Lebesgue
almost every $u$.
\end{definition}

For a randomized merger, define its conditional level-$\alpha$ rejection
probability by
\begin{equation}\label{eq:ad-fractional-test}
\phi^Q_\alpha(x)
=
\int_0^1\one\{Q(x;u)\le\alpha\}\,\dd u.
\end{equation}

\begin{proposition} 
\label{prop:ad-randomized-validity}
A randomized merger $Q$ is valid if and only if
\begin{equation}\label{eq:ad-fractional-capacity}
\sup_{P\in\mathcal P_n}\int\phi^Q_\alpha(x)\,P(\dd x)
\le\alpha
\qquad\text{for every }\alpha\in[0,1].
\end{equation}
If $Q$ dominates a deterministic $F$, then
\[
\phi^Q_\alpha(x)\ge\one\{F(x)\le\alpha\}.
\]
Consequently, a sufficient condition for randomized admissibility of $F$ is
that, for every $\alpha$, the indicator of $R_F(\alpha)$ is pointwise
maximal among all fractional tests $\phi:[0,\infty)^n\to[0,1]$ satisfying
\eqref{eq:ad-fractional-capacity}.
\end{proposition}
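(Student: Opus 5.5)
The plan is to prove the three assertions of \cref{prop:ad-randomized-validity} in order, by Fubini over the auxiliary uniform together with the definitions already in place. I expect the validity equivalence and the domination inequality to be short; the only point needing care is making the measure-zero exceptional set in the definition of domination harmless.

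\emph{Validity.} Fix a product null $P\in\mathcal P_n$ and $\alpha\in[0,1]$. Because $U$ is independent of $\mathbf E$, the joint law of $(\mathbf E,U)$ is $P\otimes\mathrm{Leb}$. Since $Q$ is Borel, the set $\{(x,u):Q(x;u)\le\alpha\}$ is Borel. Tonelli then gives
\[
\Pp\{Q(\mathbf E;U)\le\alpha\}=\int\!\!\int_0^1\one\{Q(x;u)\le\alpha\}\,\dd u\,P(\dd x)=\int\phi^Q_\alpha(x)\,P(\dd x),
\]
and the same argument shows that $\phi^Q_\alpha$ is Borel. The defining inequality of a randomized merger, required for every product null, is therefore literally \eqref{eq:ad-fractional-capacity}. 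This gives both directions at once.

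\emph{Domination.} Suppose $Q$ dominates $F$, and fix $x$. For Lebesgue-a.e.\ $u$ we have $Q(x;u)\le F(x)$. Hence if $F(x)\le\alpha$, then $\one\{Q(x;u)\le\alpha\}=1$ for a.e.\ $u$, so $\phi^Q_\alpha(x)=1$. If $F(x)>\alpha$, the claimed bound $\phi^Q_\alpha(x)\ge0$ is trivial. Since the exceptional null set depends on $x$, the bound has to be established pointwise in $x$, exactly as above. This is the step I expect to be the only delicate one, and the pointwise argument handles it.

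\emph{Sufficient condition.} Let $Q$ be a valid randomized merger dominating $F$. For each $\alpha$, the function $\phi^Q_\alpha$ is a fractional test $[0,\infty)^n\to[0,1]$ satisfying \eqref{eq:ad-fractional-capacity} by the first part, and it satisfies $\phi^Q_\alpha\ge\one_{R_F(\alpha)}$ by the second part. Pointwise maximality of the indicator then forces $\phi^Q_\alpha=\one_{R_F(\alpha)}$ for every $\alpha$. So for each $x$ and each $\alpha<F(x)$ we get $\phi^Q_\alpha(x)=0$, meaning $Q(x;u)>\alpha$ for a.e.\ $u$.

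To pass to $Q(x;u)\ge F(x)$ for a.e.\ $u$, take a countable sequence $\alpha_k\uparrow F(x)$ and intersect the corresponding full-measure sets; when $F(x)=0$ there is nothing to prove. Combined with domination, this yields $Q(x;\cdot)=F(x)$ a.e. for every $x$. Hence no randomized merger strictly dominates $F$, which is randomized admissibility.
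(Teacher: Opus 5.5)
Your proof is correct and follows the same route as the paper: Fubini/Tonelli over the independent uniform gives the validity equivalence, and pointwise domination, handled for each fixed $x$ up to a $u$-null set, gives the indicator bound and the admissibility criterion. Your countable-sequence step $\alpha_k\uparrow F(x)$, which yields $Q(x;\cdot)=F(x)$ almost everywhere, spells out what the paper calls ``immediate.'' It also makes explicit the implicit convention that strict randomized domination means strict inequality on a set of $u$ of positive measure.
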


\begin{proof}
Equation \eqref{eq:ad-fractional-capacity} follows from Fubini's theorem.
The remaining statements are immediate from pointwise domination.
\end{proof}

The following elementary obstruction is useful both deterministically and
randomly.  With the convention $1/0=\infty$, define
\begin{equation}\label{eq:ad-atom-capacity}
m_n(x)=\prod_{i=1}^n\min\{1,1/x_i\}.
\end{equation}

\begin{lemma} 
\label{lem:ad-point-mass}
For every $x$, the largest mass that a product null can assign to the
singleton $\{x\}$ is $m_n(x)$.  Hence every deterministic     merger
satisfies
\begin{equation}\label{eq:ad-deterministic-atom-bound}
F(x)\ge m_n(x),
\end{equation}
and every valid randomized merger satisfies
\begin{equation}\label{eq:ad-randomized-atom-bound}
m_n(x)\,\phi^Q_\alpha(x)\le\alpha
\qquad\text{for every }\alpha\in[0,1].
\end{equation}
\end{lemma}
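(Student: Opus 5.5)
The plan is to prove the point-mass bound in two directions, an upper bound valid for every product null and a matching construction, and then read off the two consequences.

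\emph{Upper bound.} Let $P=\mu_1\otimes\cdots\otimes\mu_n\in\mathcal P_n$. Then $P(\{x\})=\prod_i\mu_i(\{x_i\})$, so it suffices to bound a single factor. If $x_i\le1$ the trivial bound $\mu_i(\{x_i\})\le1=\min\{1,1/x_i\}$ is enough. If $x_i>1$, Markov's inequality gives
\[
\mu_i(\{x_i\})\le\mu_i([x_i,\infty))\le\E[E_i]/x_i\le 1/x_i .
\]
Note that $x_i=0$ falls in the first case, so the convention $1/0=\infty$ causes no trouble. Multiplying the factor bounds gives $P(\{x\})\le m_n(x)$.

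\emph{Attainment.} For each coordinate, use $\delta_{x_i}$ when $x_i\le1$; this has mean at most one. When $x_i>1$, use the law putting mass $1/x_i$ at $x_i$ and mass $1-1/x_i$ at $0$, which has mean exactly one. This is the witness already used in the proof of \cref{thm:Kad-optimality}. The product of these laws lies in $\mathcal P_n$ and gives the atom $\{x\}$ mass exactly $m_n(x)$. Therefore the supremum equals $m_n(x)$ and is attained.

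\emph{Consequences.} For a deterministic merger $F$, take $P$ to be the maximizing product law and set $\alpha=F(x)$. Since $x\in R_F(F(x))$, validity gives
\[
m_n(x)=P(\{x\})\le P\{F\le F(x)\}\le F(x).
\]
For a randomized merger $Q$, use \cref{prop:ad-randomized-validity} with the same $P$. Because $\phi^Q_\alpha\ge0$, restricting the integral to the atom gives
\[
m_n(x)\,\phi^Q_\alpha(x)\le\int\phi^Q_\alpha\,\dd P\le\alpha
\]
for every $\alpha\in[0,1]$.

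There is no real obstacle here. The only point needing care is the boundary behaviour: the factor bound when $x_i$ equals one or zero, and checking that the attaining laws genuinely belong to $\mathcal P_n$.
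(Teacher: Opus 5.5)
Your proposal is correct and follows the same route as the paper: the per-coordinate bound from the mean constraint (your explicit Markov step), attained by $\delta_{x_i}$ for $x_i\le1$ and by the two-point law on $\{0,x_i\}$ for $x_i>1$, with independence multiplying the factors. Applying validity to that product law then gives both the deterministic and the randomized bound, exactly as you do.
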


\begin{proof}
For a marginal null, an atom at $x_i>1$ has probability at most $1/x_i$ by
the mean constraint, while an atom at $x_i\le1$ can have probability one.
These bounds are attained by the deterministic law at $x_i$ when $x_i\le1$
and by the two-point law on $\{0,x_i\}$ when $x_i>1$.  Independence gives
\eqref{eq:ad-atom-capacity}.  Applying validity to this product law proves
\eqref{eq:ad-deterministic-atom-bound} and
\eqref{eq:ad-randomized-atom-bound}.
\end{proof}

On the upper orthant, Gaffke attains this point-mass lower bound.  Indeed, if
$x_i\ge1$ for all $i$, the exponential representation \eqref{eq:Kexp} gives
\begin{equation}\label{eq:ad-K-product-orthant}
K_n(x)=
\prod_{i=1}^n\E e^{-(x_i-1)E_i}
=
\frac1{\prod_{i=1}^n x_i}
=
m_n(x).
\end{equation}
This makes a uniformly randomized Markov improvement possible on the entire
upper orthant.

\begin{definition}[Randomized product-orthant rule]
\label{def:ad-KPi}
Define
\begin{equation}\label{eq:ad-KPi}
K_n^{\Pi}(x;u)=
\begin{cases}
\displaystyle \frac{u}{\prod_{i=1}^n x_i},
&\min_i x_i\ge1,\\[3mm]
K_n(x),&\min_i x_i<1.
\end{cases}
\end{equation}
\end{definition}

\begin{theorem}
\label{thm:ad-KPi-validity}
For every $n\ge1$, $K_n^{\Pi}$ is a valid randomized independent e-to-p
merger.  Moreover,
\[
K_n^{\Pi}(x;u)\le K_n(x)
\]
for every $x$ and $u$, with strict inequality whenever
$\min_i x_i\ge1$ and $u<1$.  Thus $K_n$ is inadmissible once external
randomization is allowed, including when $n=1$.
\end{theorem}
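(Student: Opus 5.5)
The pointwise comparison is immediate. On the upper orthant \eqref{eq:ad-K-product-orthant} gives $K_n(x)=1/\prod_i x_i\ge u/\prod_i x_i$, with strict inequality when $u<1$. Off the orthant the two rules coincide. So the substance is validity, and I would prove it by the same reduction used for \cref{thm:Kad-validity} and \cref{thm:Kbar-validity}.

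\textbf{Step 1 (monotone reduction).} For fixed $u$, I first check that $x\mapsto K_n^{\Pi}(x;u)$ is coordinatewise nonincreasing. Inside the orthant, $u/\prod x_i$ is decreasing. Outside it, $K_n$ is nonincreasing. Crossing into the orthant can only lower the value, since there $K_n^\Pi\le K_n$ and $K_n$ is monotone. Hence, exactly as in \eqref{eq:Kad-rescale-inclusion}, each input may be rescaled to mean one. By \cref{lem:ad-mixture}, and by conditioning on the latent components, it then suffices to treat product laws whose factors are $\delta_1$ or $Q_{a_i,b_i}$ with $0\le a_i<1<b_i$.

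Neutral coordinates equal one and are harmless: they lie in $[1,\infty)$, so they do not affect membership in the orthant, and they drop out of both $\prod x_i$ and $K_n$ by \eqref{eq:ad-neutral}. We may therefore assume every coordinate is a nondegenerate two-point law, with $r\ge1$ such coordinates.

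\textbf{Step 2 (splitting the rejection event).} Let $p_i=(1-a_i)/(b_i-a_i)$. Since $a_i<1$, the orthant is reached only on the all-high state $(b_1,\dots,b_r)$, which has probability $\prod p_i$. There the conditional rejection probability in $U$ is $\min\{1,\alpha\prod b_i\}$. On every other state the rule is deterministic and equals $K_n$. The target inequality is therefore
\[
\Pp\{K_n(X)\le\alpha,\ X\neq b\}+\prod_i p_i\,\min\Bigl\{1,\alpha\prod_i b_i\Bigr\}\le\alpha .
\]
If $\alpha\prod b_i\ge1$, every state with $K_n\le\alpha$ has its randomized rule also $\le\alpha$, and the left side is at most $\Pp\{K_n(X)\le\alpha\}\le\alpha$ by \cref{thm:VT}.

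\textbf{Step 3 (main obstacle: the case $\alpha\prod b_i<1$).} Here the high-high state is not rejected by $K_n$. The task is to show
\[
\Pp\{K_n\le\alpha,\ X\ne b\}\le\alpha\Bigl(1-\prod_i p_ib_i\Bigr),
\]
noting that $p_ib_i\le1$. This is the "product-mixture" step.

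My first attempt would be to build an auxiliary product null to which \cref{thm:VT} applies. Let the high atom $b_i$ carry less mass, and transfer the freed mean to an extra atom so far up that any state containing it is rejected by $K_n$ with essentially known probability, in the spirit of the witness laws in \cref{thm:Kad-optimality}. Then subtract the contribution of those extra states.

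If this does not close, I would fall back on induction on $r$ via the slicing identity \eqref{eq:slicing}. Conditioning on the last coordinate, with the low branch $a_r$ rescaling the remaining coordinates by $c(t)$, would reduce the claim to an $(r-1)$-dimensional statement of the same randomized form.

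I expect this inequality to be where all the real work lies. Once it holds, averaging over the latent components finishes the proof, and the case $n=1$ is the uniformly randomized Markov inequality.
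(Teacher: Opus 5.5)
\textbf{Gap in Step 3.} Your reduction through Step 2 is sound and follows the paper's route. The paper skips the rescaling; it instead notes that a latent component $\delta_a$ with $a<1$ keeps every realization off the orthant, where the rule is just $K_n$ and \cref{thm:VT} applies. Your monotonicity check makes the rescaling legitimate as well. The gap is Step 3. You leave the inequality
\[
\Pp\{K_n(X)\le\alpha,\ X\ne b\}\le\alpha\Bigl(1-\prod_i p_ib_i\Bigr)
\]
unproved and only sketch two strategies: an auxiliary witness null, and induction via the slicing identity. Neither is carried out, so as written the proposal does not establish validity in the case $\alpha\prod_i b_i<1$, which is the only case where the randomization matters.

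\textbf{The missing observation.} Neither strategy is needed, because the left-hand side above is zero. Every support state other than $b=(b_1,\dots,b_r)$ is coordinatewise below $b$, since each coordinate is either $a_i<b_i$ or $b_i$. You already used in Step 1 that $K_n$ is coordinatewise nonincreasing, so every state satisfies $K_n(x)\ge K_n(b)=1/\prod_i b_i>\alpha$ in this case. Hence no state other than the top is rejected. The full conditional rejection probability is then
\[
\prod_i p_i\cdot\alpha\prod_i b_i=\alpha\prod_i p_ib_i\le\alpha,
\]
using $p_ib_i\le1$ (equivalently $a_i(b_i-1)\ge0$), which you had already noted. This one-line monotonicity argument is exactly how the paper closes the case $\alpha<K_n(x^\top)$. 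Once you condition on the latent components, there is no further inequality to prove.
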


\begin{proof}
The pointwise comparison follows from
\eqref{eq:ad-K-product-orthant}.  To prove validity, use
\cref{lem:ad-mixture} independently for each marginal and condition on the
selected elementary components.

If at least one selected component is a point mass $\delta_a$ with $a<1$,
then every conditional realization lies outside the upper orthant.  The rule
therefore equals $K_n$, and conditional validity follows from
\cref{thm:VT}.

It remains to consider a component system in which every coordinate is either
$\delta_1$ or a mean-one law $Q_{a_i,b_i}$ with
$0\le a_i<1<b_i$.  Among the finitely many conditional support points, the
upper orthant contains exactly one point $x^\top$: every nondegenerate
coordinate takes $b_i$, while every degenerate coordinate equals one.  Put
\[
q=\Pp\{E=x^\top\}
=
\prod_{i:\,E_i\sim Q_{a_i,b_i}}
\frac{1-a_i}{b_i-a_i}
\]
and
\[
k=K_n(x^\top)
=
\prod_{i:\,E_i\sim Q_{a_i,b_i}}\frac1{b_i}.
\]
For each nondegenerate coordinate,
\[
\frac{1-a_i}{b_i-a_i}\le\frac1{b_i},
\]
because this is equivalent to $a_i(b_i-1)\ge0$.  Hence $q\le k$.

Fix $\alpha<k$.  Every non-top support point is coordinatewise below
$x^\top$, so its Gaffke value is at least $k$; it is therefore not rejected.
At the top point, the randomized value is $Uk$, and hence the conditional
rejection probability is
\[
q\,\Pp(Uk\le\alpha)
=q\frac{\alpha}{k}
\le\alpha.
\]
If $\alpha\ge k$, the top point is always rejected, and all other decisions
are identical to those made by $K_n$.  The entire conditional rejection event
therefore agrees with $\{K_n\le\alpha\}$ and has probability at most
$\alpha$.  Averaging over the latent component choices proves validity for
arbitrary independent e-variables.
\end{proof}

The randomized factor in \eqref{eq:ad-KPi} is pointwise sharp on the upper
orthant.

\begin{proposition}  
\label{prop:ad-KPi-optimality}
Fix $x\in[1,\infty)^n$ and put
\[
k=K_n(x)=1/\prod_i x_i.
\]
For every valid randomized merger $Q$,
\begin{equation}\label{eq:ad-upper-cdf-bound}
\phi^Q_\alpha(x)\le\min\{1,\alpha/k\},
\qquad 0\le\alpha\le1.
\end{equation}
The random variable $Uk$ attains equality in
\eqref{eq:ad-upper-cdf-bound}.  Hence it is stochastically the smallest
possible randomized p-value at the fixed point $x$.  If $u\mapsto Q(x;u)$ is
nondecreasing, then necessarily
\[
Q(x;u)\ge uk
\qquad\text{for Lebesgue almost every }u.
\]
\end{proposition}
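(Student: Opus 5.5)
The bound \eqref{eq:ad-upper-cdf-bound} follows directly from the point-mass obstruction in \cref{lem:ad-point-mass}, applied at the fixed point $x$. Since every $x_i\ge1$, we have $m_n(x)=\prod_i 1/x_i=k$, so \eqref{eq:ad-randomized-atom-bound} gives $k\,\phi^Q_\alpha(x)\le\alpha$. Combined with the trivial bound $\phi^Q_\alpha(x)\le1$, this yields $\phi^Q_\alpha(x)\le\min\{1,\alpha/k\}$. For completeness I will spell out the witness. Take the product null in which coordinate $i$ equals $x_i$ with probability $1/x_i$ and $0$ otherwise, with coordinates equal to one taken deterministically. This null assigns mass exactly $k$ to $\{x\}$. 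Validity in the Fubini form \eqref{eq:ad-fractional-capacity} then gives $k\,\phi^Q_\alpha(x)\le\int\phi^Q_\alpha\,\dd P\le\alpha$.

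Attainment is a direct computation: $\int_0^1\one\{uk\le\alpha\}\,\dd u=\min\{1,\alpha/k\}$. This is exactly the rejection fraction of $K_n^{\Pi}(x;\cdot)$ at $x$, and that rule is valid by \cref{thm:ad-KPi-validity}. Note that $\phi^Q_\alpha(x)$ is the distribution function at $\alpha$ of the random variable $Q(x;U)$. So \eqref{eq:ad-upper-cdf-bound} says that the law of $Q(x;U)$ is stochastically at least the law of $Uk$, which has distribution function $\min\{1,\alpha/k\}$ on $[0,1]$. This is the ``stochastically smallest'' claim.

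For the last claim, suppose $g(u)=Q(x;u)$ is nondecreasing. The key step is the inclusion: for any $\alpha$, the set $\{u:g(u)\le\alpha\}$ is an interval of the form $[0,c)$ or $[0,c]$, whose Lebesgue length is $\phi^Q_\alpha(x)$. Now suppose the set $\{u:g(u)<uk\}$ had positive measure, and pick a point $u_0$ in it with $u_0<1$. Choose $\alpha$ with $g(u_0)<\alpha<u_0k$. Monotonicity gives $g\le\alpha$ on $[0,u_0]$, so $\phi^Q_\alpha(x)\ge u_0>\alpha/k$, which contradicts \eqref{eq:ad-upper-cdf-bound}.

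In fact this argument shows $g(u)\ge uk$ at every $u$, not merely almost every $u$. The only subtlety I expect is bookkeeping at the endpoints, namely $u_0=1$ and the case $\alpha\ge k$. These are handled by observing that $g(u_0)<u_0k\le k\le1$, so the chosen $\alpha$ lies strictly below $k$, where the bound $\alpha/k<1$ is the binding constraint.
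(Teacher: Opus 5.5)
Your proof is correct and takes essentially the same route as the paper. The bound is the point-mass obstruction of \cref{lem:ad-point-mass} with $m_n(x)=k$, attainment is the uniform-cdf computation for $Uk$, and your monotonicity argument is an explicit version of the paper's one-line ``quantile inequality''; your additions, that $Uk$ is realized by the valid rule $K_n^{\Pi}$ and that monotonicity actually gives $Q(x;u)\ge uk$ for every $u\in[0,1]$ rather than almost every $u$, are also correct.
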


\begin{proof}
On the upper orthant, $m_n(x)=k$ by
\eqref{eq:ad-K-product-orthant}, so
\eqref{eq:ad-upper-cdf-bound} is exactly
\eqref{eq:ad-randomized-atom-bound}.  Since $Uk$ is uniform on $[0,k]$, its
cdf attains the bound.  The final assertion is the corresponding quantile
inequality.
\end{proof}

\begin{remark}[Neutral-face randomization]
The same component argument gives further randomized improvements on exact
neutral faces.  For example, if exactly one coordinate $x_i$ differs from
one, one may replace $K_1(x_i)$ by the uniformly randomized Markov p-value
\[
\min\{1,U/x_i\}.
\]
If exactly two coordinates differ from one, one may use the deterministic
admissible rule $\Kad$ outside their upper quadrant and the randomized product
rule $U/(x_ix_j)$ inside it.  These refinements can be combined with
\eqref{eq:ad-KPi}.  We emphasize, however, that external randomization leads
to randomized tests and, upon inversion, randomized confidence sets; it does
not by itself produce a uniformly tighter deterministic confidence interval.
\end{remark}

\subsection{Why the test improvement does not  yield a general interval improvement}
\label{sec:interval-caution}

The preceding results concern a fixed vector of e-values.  Inverting a test to
obtain a confidence interval is a more structured operation: one evaluates a
parameter-indexed family of scaled e-value vectors and must preserve nestedness
of the retained parameter set.  Pointwise test domination alone does not say
how far, or even whether, the closed interval endpoints move.

For $n=2$, the fully monotone rule $\Kad$ can in principle be inverted, and it
may produce a strict finite-sample refinement for some samples.  We do not
study that special small-sample interval here.  For $n\ge3$, the currently
proved improvement $\overline K_n$ changes $K_n$ only when all but at most two
scaled observations equal the neutral value one.  Under bounded-mean
inversion, such equalities correspond to exceptional candidate values, often
individual observations, and may leave the closure of the confidence set
unchanged.  Moreover, $\overline K_n$ is not globally coordinatewise
monotone.  It is therefore not presently a practical replacement for the
Gaffke tests in a general interval algorithm.

Accordingly, finite-sample inadmissibility of the e-to-p merger and
first-order efficiency of the Gaffke interval coexist without contradiction.
Constructing a globally monotone higher-dimensional dominator, understanding
its inversion, and determining whether it yields uniformly shorter useful
intervals are separate open problems.

\section{Two confidence intervals for a bounded mean}\label{sec:ci}

Let $X_1,\ldots,X_n$ be independent random variables supported on $[0,1]$ and
having a common mean $\mu$.  Identical distributions are not needed for the
finite-sample validity in this section.  Let $\mathbf x=(x_1,\ldots,x_n)$ denote the
observed sample.

\subsection{Two one-sided inversions}

For a candidate $\theta\in(0,1)$, define
\begin{align}
 p_{\le}(\theta;\mathbf x)
 &:=K_n\!\left(\frac{x_1}{\theta},\ldots,\frac{x_n}{\theta}\right),
 \label{eq:ple}\\
 p_{\ge}(\theta;\mathbf x)
 &:=K_n\!\left(\frac{1-x_1}{1-\theta},\ldots,
                 \frac{1-x_n}{1-\theta}\right).
 \label{eq:pge}
\end{align}
The first is valid for testing $H_{0,\le}(\theta):\mu\le\theta$, and the second
is valid for testing $H_{0,\ge}(\theta):\mu\ge\theta$.

For the same Dirichlet vector as in \eqref{eq:Kdef}, define
\begin{equation}\label{eq:ST}
S_{\mathbf x}:=\sum_{i=1}^n x_iD_i,
\qquad
T_{\mathbf x}:=D_0+\sum_{i=1}^n x_iD_i=D_0+S_{\mathbf x}.
\end{equation}
Then
\begin{equation}\label{eq:pCDF}
p_{\le}(\theta;\mathbf x)=\Pp_{\mathbf D}(S_{\mathbf x}\le\theta),
\qquad
p_{\ge}(\theta;\mathbf x)=\Pp_{\mathbf D}(T_{\mathbf x}\ge\theta).
\end{equation}
The second identity follows from
$\sum_i(1-x_i)D_i=1-D_0-S_{\mathbf x}=1-T_{\mathbf x}$.

For a real random variable $Y$, write
\[
Q_q(Y):=\inf\{t:\Pp(Y\le t)\ge q\},\qquad 0\le q\le1,
\]
with the usual endpoint conventions (i.e., $Q_0(Y)$ is  the essential infimum and  $Q_1(Y)$ is the essential supremum).  Choose tail levels
$\alpha_{\mathrm L},\alpha_{\mathrm U}\ge0$ such that
\[
\alpha_{\mathrm L}+\alpha_{\mathrm U}\le\alpha<1.
\]
By default, we may choose $\alpha_{\rm L}=\alpha_{\rm U}=\alpha/2$. 
In the following results, we always use open intervals $(a,b)$, which is the usual case, but they should be interpreted as $[0,b)$ or $(a,1]$ when $\mathbf x=\mathbf 0$ or $\mathbf x=\mathbf 1$  (the extreme boundary case). 

\begin{theorem}[Finite-sample Gaffke interval]\label{thm:CI}
Let $\alpha_{\mathrm L}+\alpha_{\mathrm U}\le\alpha$. Define
\begin{equation}\label{eq:CIendpoints}
L_{\alpha_{\mathrm L}}(\mathbf x)
=Q_{\alpha_{\mathrm L}}(S_{\mathbf x}),
\qquad
U_{\alpha_{\mathrm U}}(\mathbf x)
=Q_{1-\alpha_{\mathrm U}}(T_{\mathbf x}),
\end{equation}
and
\[
\calC_{1-\alpha}(\mathbf x)
=(L_{\alpha_{\mathrm L}}(\mathbf x),U_{\alpha_{\mathrm U}}(\mathbf x)),
\]
with the convention for $\mathbf x=\mathbf 0$ or $\mathbf x=\mathbf 1$ mentioned above. 
Then, 
\begin{equation}\label{eq:coverage}
\Pp\!\left\{\mu \in \mathcal C_{1-\alpha}(\mathbf x) 
\right\}
\ge1-\alpha_{\mathrm L}-\alpha_{\mathrm U}
\ge1-\alpha,
\end{equation} 
and the interval  $\mathcal C_{1-\alpha}$  contains the sample mean for every sample.
\end{theorem}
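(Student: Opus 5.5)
The plan is to get coverage from a union bound over two one-sided tests, each valid by \cref{thm:VT}, and then show the sample mean lies inside the interval using the Dirichlet expectations.

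\textbf{One-sided validity.} Since $\E[X_i/\mu]=1$ and the $X_i/\mu$ are independent and nonnegative, \cref{thm:VT} gives $\Pp\{p_{\le}(\mu;\mathbf X)\le\alpha_{\mathrm L}\}\le\alpha_{\mathrm L}$ for $0<\mu<1$. The same holds for $p_{\ge}(\mu;\mathbf X)$ with the e-variables $(1-X_i)/(1-\mu)$. The cases $\mu\in\{0,1\}$ are degenerate: then $\mathbf X=\mathbf 0$ or $\mathbf 1$ almost surely, and the stated endpoint conventions handle them directly.

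\textbf{Translating to the endpoints.} By \eqref{eq:pCDF}, $p_{\le}(\theta;\mathbf x)=\Pp_{\mathbf D}(S_{\mathbf x}\le\theta)$. If $\mu\le L_{\alpha_{\mathrm L}}(\mathbf x)=Q_{\alpha_{\mathrm L}}(S_{\mathbf x})$, then every $t<\mu$ satisfies $t<Q_{\alpha_{\mathrm L}}$, so $\Pp(S\le t)<\alpha_{\mathrm L}$.

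\begin{itemize}
\item If $\mu<L$, take $t=\mu$ to get $p_{\le}(\mu)<\alpha_{\mathrm L}$.
\item If $\mu=L$ exactly, I need $\Pp(S\le L)\le\alpha_{\mathrm L}$. For $\mathbf x\ne\mathbf 0$, $S_{\mathbf x}$ has a continuous distribution, since it is a nonconstant linear functional of an absolutely continuous Dirichlet vector. So its cdf is continuous and $\Pp(S\le L)=\alpha_{\mathrm L}$.
\end{itemize}

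In both cases $\{\mu\le L\}\subseteq\{p_{\le}(\mu;\mathbf X)\le\alpha_{\mathrm L}\}$.

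For the upper side, $\Pp(T\ge\theta)=1-\Pp(T<\theta)$, and $T_{\mathbf x}$ is continuous whenever $\mathbf x\ne\mathbf 1$. The symmetric argument then gives $\{\mu\ge U\}\subseteq\{p_{\ge}(\mu;\mathbf X)\le\alpha_{\mathrm U}\}$. A union bound yields \eqref{eq:coverage}.

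\textbf{Sample mean inside the interval.} The remaining claim is that $L<\bar x<U$, where $\bar x$ is the sample mean. Since $\E D_i=1/(n+1)$, we have $\E S=n\bar x/(n+1)<\bar x$ and $\E T=(1+n\bar x)/(n+1)>\bar x$, but a quantile at level $\alpha_{\mathrm L}$ need not lie below the mean in general. Instead I would apply \cref{thm:master} with $k=1$: it gives $K_n(\mathbf x/\bar x)\,e_1(\mathbf x/\bar x)\le n$. Because $e_1(\mathbf x/\bar x)=n$, this reads $\Pp(S_{\mathbf x}\le\bar x)\le1$, which is useless.

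What I actually need is $\Pp(S_{\mathbf x}\le\bar x)>\alpha_{\mathrm L}$. The cleanest route is to show $\Pp(S_{\mathbf x}\le\bar x)\ge 1/2$ or similar, or more simply the bound $\Pp(S_{\mathbf x}<\bar x)\ge$ some positive constant exceeding $\alpha$. The plan is:

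\begin{itemize}
\item Write $S_{\mathbf x}=(1-D_0)\sum_{i\ge1} x_iD_i'$, where $D'\sim\Dir(1,\ldots,1)$ on $n$ coordinates is independent of $D_0$.
\item Use exchangeability: averaging $\sum_i x_iD'_{\pi(i)}$ over all cyclic shifts $\pi$ gives exactly $\bar x$. Hence at least one cyclic shift has value $\le\bar x$, so $\Pp(\sum_i x_iD'_i\le\bar x)\ge 1/n$.
\item On that event, $S\le(1-D_0)\bar x<\bar x$.
\end{itemize}

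This only gives $1/n$, which is not enough, so the key missing step is a level-independent bound. I would instead compare with $\mathbf x=\bar x\mathbf 1$ and use that $K_n$ is $1$ when all coordinates are $\le1$. It may be that the intended meaning is simply $L\le\bar x\le U$ for the default small $\alpha$, established by bounding $\Pp(S_{\mathbf x}>\bar x)\le\Pp(T_{\mathbf x}>\bar x)$ and symmetrizing.

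\textbf{Main obstacle.} I expect this last containment step to be the hardest part. My first attempt would use the identity $\Pp(S\le\bar x)=K_n(\mathbf x/\bar x)$ together with a direct lower bound on $K_n$ at mean-one inputs of the form $K_n\ge 1-\Pp(\text{all }D\text{ mass on large }x_i)$. I would check whether such a bound exceeds $\alpha<1$ uniformly in $\mathbf x$.
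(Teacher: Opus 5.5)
Your coverage argument is correct and follows the paper's route. Both of you use the validity of the two one-sided Gaffke $p$-values at $\theta=\mu$ via \cref{thm:VT}, translate the quantile endpoints into rejection events, and finish with a union bound. Your version is slightly more complete. The paper only bounds $\Pp\{L_{\alpha_{\mathrm L}}>\mu\}$ and $\Pp\{U_{\alpha_{\mathrm U}}<\mu\}$, which gives coverage of the closed interval. You also use that $S_{\mathbf x}$ is atomless for $\mathbf x\ne\mathbf 0$, and $T_{\mathbf x}$ for $\mathbf x\ne\mathbf 1$, to put the boundary events $\mu=L$ and $\mu=U$ inside the rejection regions. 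That is exactly what the open interval $(L,U)$ requires. (In the case $\mu<L$ you mean ``every $t<L$''.)

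The genuine gap is the last clause, which you leave unproved. In fact no proof exists, because under the stated hypotheses $\alpha_{\mathrm L}+\alpha_{\mathrm U}\le\alpha<1$ the clause is false. Take $n=2$, $\mathbf x=(0,1)$, $\alpha_{\mathrm L}=0.8$, $\alpha_{\mathrm U}=0.1$. Then $S_{\mathbf x}=D_2\sim\Beta(1,2)$, so $\Pp(S_{\mathbf x}\le 1/2)=3/4<0.8$. Hence $L_{\alpha_{\mathrm L}}(\mathbf x)=1-\sqrt{0.2}\approx0.553>\bar x=1/2$, while $T_{\mathbf x}=1-D_1$ gives $U_{\alpha_{\mathrm U}}(\mathbf x)=\sqrt{0.9}$, so the interval misses $\bar x$. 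Likewise, for iid data with $\sigma>0$, \cref{thm:width} forces $L_{\alpha_{\mathrm L}}>\bar X_n$ eventually whenever $\alpha_{\mathrm L}>1/2$. Your suspicion that $K_n(\mathbf x/\bar x)$ has no lower bound exceeding every $\alpha<1$ was right; the correct conclusion was a counterexample, not a search for a proof.

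The paper's own proof of this clause never touches $\bar x$. From $S_{\mathbf x}\le T_{\mathbf x}$ almost surely, $\alpha_{\mathrm L}\le 1-\alpha_{\mathrm U}$, and monotonicity of quantiles, it obtains $L_{\alpha_{\mathrm L}}\le Q_{\alpha_{\mathrm L}}(T_{\mathbf x})\le Q_{1-\alpha_{\mathrm U}}(T_{\mathbf x})=U_{\alpha_{\mathrm U}}$. That shows only that the interval is never reversed, the ``always nonempty'' of the Discussion, and this one-line argument is what you should supply.

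A true containment statement needs small tail levels. For $\mathbf x\ne\mathbf 0$, $S_{\mathbf x}$ is a linear image of the uniform law on a simplex, hence log-concave. The one-dimensional Gr\"unbaum inequality then gives $\Pp(S_{\mathbf x}\le \E S_{\mathbf x})\ge 1/e$. Since $\E S_{\mathbf x}=n\bar x/(n+1)\le\bar x$ and the cdf is continuous, $L_{\alpha_{\mathrm L}}<\bar x$ whenever $\alpha_{\mathrm L}<1/e$. Symmetrically, using $\E T_{\mathbf x}=(1+n\bar x)/(n+1)\ge\bar x$, $U_{\alpha_{\mathrm U}}>\bar x$ whenever $\alpha_{\mathrm U}<1/e$. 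This covers equal tails for every $\alpha<2/e$.
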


\begin{proof}
Suppose first that $0<\mu<1$.  Under the true mean,
$K_n(\mathbf X/\mu)$ is a valid $p$-value by \cref{thm:VT}, because
$\E(X_i/\mu)=1$.  If $L_{\alpha_{\mathrm L}}(\mathbf X)>\mu$, then
\(
\Pp_{\mathbf D}(S_{\mathbf X}\le\mu)<\alpha_{\mathrm L},
\)
so
\[
\Pp\{L_{\alpha_{\mathrm L}}(\mathbf X)>\mu\}
\le
\Pp\{K_n(\mathbf X/\mu)\le\alpha_{\mathrm L}\}
\le\alpha_{\mathrm L}.
\]
Similarly, $K_n((\mathbf 1-\mathbf X)/(1-\mu))$ is a valid $p$-value.  If
$U_{\alpha_{\mathrm U}}(\mathbf X)<\mu$, then there exists $t<\mu$ such that
$\Pp_{\mathbf D}(T_{\mathbf X}\le t)\ge1-\alpha_{\mathrm U}$.  Consequently,
\[
\Pp_{\mathbf D}(T_{\mathbf X}\ge\mu)
=1-\Pp_{\mathbf D}(T_{\mathbf X}<\mu)
\le\alpha_{\mathrm U}.
\]
Hence
\(
\Pp\{U_{\alpha_{\mathrm U}}(\mathbf X)<\mu\}
\le\alpha_{\mathrm U}.
\)
A union bound proves \eqref{eq:coverage}.  The cases $\mu=0$ and $\mu=1$ are
immediate because a $[0,1]$-valued variable with either boundary mean is
constant almost surely.

By \eqref{eq:pCDF}, $p_{\le}(\theta;\mathbf x)$ is nondecreasing in $\theta$ and
$p_{\ge}(\theta;\mathbf x)$ is nonincreasing.  Their joint nonrejection set is
therefore an interval.  Taking its closure gives the quantile endpoints in
\eqref{eq:CIendpoints}; this convention also covers the degenerate samples
$\mathbf x=(0,\ldots,0)$ and $\mathbf x=(1,\ldots,1)$.  Finally, $S_{\mathbf x}\le T_{\mathbf x}$ almost surely and
$\alpha_{\mathrm L}\le1-\alpha_{\mathrm U}$.  Monotonicity of quantiles gives
\[
L_{\alpha_{\mathrm L}}(\mathbf x)
=Q_{\alpha_{\mathrm L}}(S_{\mathbf x})
\le Q_{\alpha_{\mathrm L}}(T_{\mathbf x})
\le Q_{1-\alpha_{\mathrm U}}(T_{\mathbf x})
=U_{\alpha_{\mathrm U}}(\mathbf x),
\]
as claimed.
\end{proof}

The usual equal-tail interval takes
$\alpha_{\mathrm L}=\alpha_{\mathrm U}=\alpha/2$.
A one-sided upper bound uses $\alpha_{\mathrm L}=0$ and
$\alpha_{\mathrm U}=\alpha$; a one-sided lower bound is analogous.

\subsection{The Learned-Miller--Thomas bound}

Let $z_1\le\cdots\le z_n$ be the order statistics of the sample, and let
$0=V_0\le V_1\le\cdots\le V_n\le V_{n+1}=1$, where
$V_1,\ldots,V_n$ are uniform order statistics.  Their spacings
\[
(V_1-V_0,\ldots,V_{n+1}-V_n)
\]
have the $\Dir(1,\ldots,1)$ distribution.  The random induced mean in
\citet{LearnedMillerThomas2020} is
\[
m(\mathbf z,\mathbf V)=\sum_{i=1}^{n+1}z_i(V_i-V_{i-1}),
\qquad z_{n+1}=1.
\]
By exchangeability of the Dirichlet spacings,
\begin{equation}\label{eq:LMTidentity}
m(\mathbf z,\mathbf V)\ \stackrel{d}{=}\ D_0+\sum_{i=1}^n x_iD_i=T_{\mathbf x}.
\end{equation}

\begin{corollary}[Coverage of the induced-mean bound]\label{cor:LMT}
The upper bound of \citet{LearnedMillerThomas2020} is exactly
$U_{\alpha}(\mathbf x)=Q_{1-\alpha}(T_{\mathbf x})$ and has finite-sample coverage at least
$1-\alpha$ under iid sampling from every distribution on $[0,1]$.  The corresponding lower bound
is
\[
L_\alpha(\mathbf x)=1-U_\alpha(\mathbf 1-\mathbf x).
\]
\end{corollary}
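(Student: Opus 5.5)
The plan is to reduce the corollary to \cref{thm:CI} with the choice $\alpha_{\mathrm L}=0$, $\alpha_{\mathrm U}=\alpha$, after identifying the Learned-Miller--Thomas upper bound with $Q_{1-\alpha}(T_{\mathbf x})$. The Learned-Miller--Thomas bound is defined as the $(1-\alpha)$-quantile of the random induced mean $m(\mathbf z,\mathbf V)$. Since the quantile $Q_{1-\alpha}$ depends only on the law of its argument, the distributional identity \eqref{eq:LMTidentity} immediately gives that this bound equals $Q_{1-\alpha}(T_{\mathbf x})=U_\alpha(\mathbf x)$.

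To justify \eqref{eq:LMTidentity}, I will write out the exchangeability step explicitly. The spacings $(V_1-V_0,\ldots,V_{n+1}-V_n)$ form a $\Dir(1,\ldots,1)$ vector of length $n+1$. Their weights are $z_1,\ldots,z_n,1$, which is a permutation of $x_1,\ldots,x_n$ with one coordinate equal to $1$ appended. Relabel so that the spacing carrying the weight $z_{n+1}=1$ is $D_0$, and the spacing carrying $z_i$ is the $D_j$ for which $x_j=z_i$. Because the Dirichlet law is invariant under permutations, this relabelled vector is again $\Dir(1,\ldots,1)$. The sum then reads $D_0+\sum_i x_iD_i=T_{\mathbf x}$.

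Coverage follows from \cref{thm:CI} with $\alpha_{\mathrm L}=0$ and $\alpha_{\mathrm U}=\alpha$, which yields $\Pp\{\mu\le U_\alpha(\mathbf X)\}\ge 1-\alpha$. The proof of \cref{thm:CI} requires only independence and a common mean, so in particular it covers iid sampling from any law on $[0,1]$. The lower-bound term is vacuous here because $Q_0(S_{\mathbf x})$ is the essential infimum, which is at most $\mu$ trivially. Equivalently, one can repeat just the upper half of that proof: if $U_\alpha(\mathbf X)<\mu$, then $K_n((\mathbf 1-\mathbf X)/(1-\mu))\le\alpha$, and \cref{thm:VT} bounds the probability of this event by $\alpha$.

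For the lower bound, I will apply the reflection $x\mapsto 1-x$. The variables $1-X_i$ lie in $[0,1]$ and have mean $1-\mu$. Applying the upper bound to them gives $\Pp\{1-\mu\le U_\alpha(\mathbf 1-\mathbf X)\}\ge1-\alpha$, which is the same as $\Pp\{\mu\ge 1-U_\alpha(\mathbf 1-\mathbf X)\}\ge 1-\alpha$. As a consistency check, $1-T_{\mathbf 1-\mathbf x}=S_{\mathbf x}$, so $1-Q_{1-\alpha}(T_{\mathbf 1-\mathbf x})=Q_\alpha(S_{\mathbf x})$ up to the quantile convention at atoms; this matches $L_\alpha$ in \cref{thm:CI}.

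The main obstacle is bookkeeping rather than substance. The open/closed endpoint convention of \cref{thm:CI} and the left-continuous quantile convention at atoms need care in the degenerate cases $\mathbf x=\mathbf 0$ and $\mathbf x=\mathbf 1$. I would handle these exactly as in \cref{thm:CI}.
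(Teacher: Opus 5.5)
Your proposal is correct and follows the same route the paper intends: the paper gives no separate proof, relying on the distributional identity \eqref{eq:LMTidentity} (quantiles depend only on the law) together with the one-sided case $\alpha_{\mathrm L}=0$, $\alpha_{\mathrm U}=\alpha$ of \cref{thm:CI}. Your explicit permutation argument for the identity, and your reflection check via $T_{\mathbf 1-\mathbf x}=1-S_{\mathbf x}$ for the lower bound, fill in exactly the details it leaves implicit (when $\mathbf x$ has ties, just fix any sorting permutation).
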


As already noted by \citet{VlassisThomas2026}, their validity theorem
resolves the general coverage conjecture in \citet{LearnedMillerThomas2020}.
Identity \eqref{eq:LMTidentity} makes explicit that the original
order-statistic construction and the Gaffke inversion are two representations
of the same bound.

\subsection{Bernoulli data and Clopper--Pearson}

Suppose the sample contains $k$ ones and $n-k$ zeros.  By the aggregation
property of the Dirichlet distribution,
\[
S_{\mathbf x}\sim\Beta(k,n-k+1),
\qquad
T_{\mathbf x}\sim\Beta(k+1,n-k),
\]
with the natural degenerate conventions at $k=0$ and $k=n$.  Therefore
\begin{align*}
L_{\alpha_{\mathrm L}}(\mathbf x)
&=Q_{\alpha_{\mathrm L}}\{\Beta(k,n-k+1)\},\\
U_{\alpha_{\mathrm U}}(\mathbf x)
&=Q_{1-\alpha_{\mathrm U}}\{\Beta(k+1,n-k)\}.
\end{align*}

\begin{corollary}\label{cor:CP}
For Bernoulli observations, the Gaffke interval is exactly the
Clopper--Pearson interval with tail allocation
$(\alpha_{\mathrm L},\alpha_{\mathrm U})$.
\end{corollary}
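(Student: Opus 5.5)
The plan is to substitute the Beta laws displayed just before the corollary into the quantile formulas of \cref{thm:CI}, and then identify the results with the Clopper--Pearson endpoints through the binomial--Beta duality. With $k$ ones among the $n$ observations, the aggregation property gives $S_{\mathbf x}=\sum_{i:x_i=1}D_i\sim\Beta(k,n-k+1)$. It also gives $T_{\mathbf x}=D_0+S_{\mathbf x}\sim\Beta(k+1,n-k)$, since we add $k$ or $k+1$ unit Dirichlet parameters out of $n+1$. Hence
\[
L_{\alpha_{\mathrm L}}(\mathbf x)=Q_{\alpha_{\mathrm L}}\{\Beta(k,n-k+1)\},\qquad U_{\alpha_{\mathrm U}}(\mathbf x)=Q_{1-\alpha_{\mathrm U}}\{\Beta(k+1,n-k)\}.
\]

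Recall that the Clopper--Pearson interval with tail allocation $(\alpha_{\mathrm L},\alpha_{\mathrm U})$ has lower endpoint equal to the $\theta$ solving $\Pp_\theta\{\mathrm{Bin}(n,\theta)\ge k\}=\alpha_{\mathrm L}$. Its upper endpoint is the $\theta$ solving $\Pp_\theta\{\mathrm{Bin}(n,\theta)\le k\}=\alpha_{\mathrm U}$. The key tool is the classical identity obtained by viewing binomial counts through uniform order statistics. Writing $V_{(j)}$ for the $j$-th order statistic of $n$ iid uniforms, we have $\Pp\{\mathrm{Bin}(n,\theta)\ge k\}=\Pp\{V_{(k)}\le\theta\}=F_{\Beta(k,n-k+1)}(\theta)$. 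Likewise $\Pp\{\mathrm{Bin}(n,\theta)\le k\}=\Pp\{V_{(k+1)}>\theta\}=1-F_{\Beta(k+1,n-k)}(\theta)$. For $1\le k\le n-1$ both Beta cdfs are continuous and strictly increasing on $(0,1)$. Solving the defining equations therefore gives exactly the $\alpha_{\mathrm L}$-quantile of $\Beta(k,n-k+1)$ and the $(1-\alpha_{\mathrm U})$-quantile of $\Beta(k+1,n-k)$, which match the displays above. Equivalently, one can read this off \eqref{eq:pCDF}: $p_{\le}(\theta;\mathbf x)$ and $p_{\ge}(\theta;\mathbf x)$ coincide with the two Clopper--Pearson one-sided binomial tail $p$-values, so the inverted sets agree.

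The only real care is at the boundary cases, and at zero tail levels where quantiles become essential infima or suprema. When $k=0$, $S_{\mathbf x}\equiv0$, so $L=0$ and the interval is $[0,U)$ by the stated convention. This matches Clopper--Pearson's lower endpoint $0$, and $U$ solves $(1-\theta)^n=\alpha_{\mathrm U}$, which is the $(1-\alpha_{\mathrm U})$-quantile of $\Beta(1,n)$. The case $k=n$ is symmetric: $T_{\mathbf x}\equiv1$ and $L$ solves $\theta^n=\alpha_{\mathrm L}$. If $\alpha_{\mathrm L}=0$ or $\alpha_{\mathrm U}=0$, both constructions give the trivial endpoint $0$ or $1$. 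The endpoint open/closed convention is a shared convention rather than a substantive difference. I expect these degenerate conventions, not the main identity, to be the only point requiring attention.
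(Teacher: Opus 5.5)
Your proposal is correct and follows the same route as the paper: the aggregation property gives $S_{\mathbf x}\sim\Beta(k,n-k+1)$ and $T_{\mathbf x}\sim\Beta(k+1,n-k)$, so the endpoints of \cref{thm:CI} become the Beta quantiles that characterize Clopper--Pearson. The only difference is that the paper takes this Beta-quantile form of Clopper--Pearson as known (citing \citet{phan2021towards}), whereas you derive it from the binomial--order-statistic duality and also check the boundary cases $k\in\{0,n\}$ and zero tail levels explicitly.
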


The above result is known~\citep{phan2021towards} and is only included for completeness.

\subsection{Comparison with the SymPol interval}

The SymPol test can also be inverted explicitly.  Homogeneity
gives $A_k(\mathbf x/\theta)=A_k(\mathbf x)/\theta^k$.  Hence its closed lower and upper
endpoints are
\begin{align}
L_{\mathrm{SP}}(\mathbf x)
&=\max_{1\le k\le n}
\left\{\alpha_{\mathrm L}A_k(\mathbf x)\right\}^{1/k},
\label{eq:LSP}\\
U_{\mathrm{SP}}(\mathbf x)
&=1-\max_{1\le k\le n}
\left\{\alpha_{\mathrm U}A_k(\mathbf 1-\mathbf x)\right\}^{1/k}.
\label{eq:USP}
\end{align}
These formulas give a finite-sample interval under independence, and more
generally whenever the relevant scaled observations are co-valid
e-variables. The following corollary is an immediate consequence of \cref{thm:master}.

\begin{corollary}[Interval containment]\label{cor:CIcontain}
For every $\mathbf x\in[0,1]^n$,
\[
L_{\mathrm{SP}}(\mathbf x)
\le L_{\alpha_{\mathrm L}}(\mathbf x)
\le U_{\alpha_{\mathrm U}}(\mathbf x)
\le U_{\mathrm{SP}}(\mathbf x).
\]
Thus the Gaffke interval is pointwise contained in the SymPol interval.
\end{corollary}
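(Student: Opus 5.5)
The plan is to compare the two constructions separately for each endpoint, working at the level of $p$-values and then passing to the inverted intervals. The middle inequality $L_{\alpha_{\mathrm L}}(\mathbf x)\le U_{\alpha_{\mathrm U}}(\mathbf x)$ is already part of \cref{thm:CI}, so only the two outer inequalities need an argument.

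\emph{Lower endpoint.} Fix $\theta\in(0,1)$. Apply \cref{thm:master} to the scaled vector $\mathbf x/\theta$ and use homogeneity, $A_k(\mathbf x/\theta)=A_k(\mathbf x)/\theta^k$. This gives
\[
p_{\le}(\theta;\mathbf x)\le \min_{0\le k\le n}\frac{\theta^k}{A_k(\mathbf x)}.
\]
Now suppose $\theta<L_{\mathrm{SP}}(\mathbf x)$. Then there is some $k\ge1$ with $\theta^k<\alpha_{\mathrm L}A_k(\mathbf x)$, so $p_{\le}(\theta;\mathbf x)<\alpha_{\mathrm L}$. By \eqref{eq:pCDF} this reads $\Pp_{\mathbf D}(S_{\mathbf x}\le\theta)<\alpha_{\mathrm L}$, which forces $Q_{\alpha_{\mathrm L}}(S_{\mathbf x})\ge\theta$. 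Letting $\theta$ increase to $L_{\mathrm{SP}}(\mathbf x)$ gives $L_{\alpha_{\mathrm L}}(\mathbf x)\ge L_{\mathrm{SP}}(\mathbf x)$. If $L_{\mathrm{SP}}=0$ there is nothing to prove.

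\emph{Upper endpoint.} The argument is symmetric, using $(\mathbf 1-\mathbf x)/(1-\theta)$. Suppose $1-\theta<\{\alpha_{\mathrm U}A_k(\mathbf 1-\mathbf x)\}^{1/k}$ for some $k$, that is, $\theta>U_{\mathrm{SP}}(\mathbf x)$. Then \cref{thm:master} gives $p_{\ge}(\theta;\mathbf x)<\alpha_{\mathrm U}$, which means $\Pp_{\mathbf D}(T_{\mathbf x}<\theta)>1-\alpha_{\mathrm U}$. Hence $Q_{1-\alpha_{\mathrm U}}(T_{\mathbf x})\le\theta$. Letting $\theta$ decrease to $U_{\mathrm{SP}}(\mathbf x)$ gives $U_{\alpha_{\mathrm U}}(\mathbf x)\le U_{\mathrm{SP}}(\mathbf x)$.

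The only delicate points are the quantile conventions at boundary values and the degenerate samples $\mathbf x=\mathbf 0$ or $\mathbf x=\mathbf 1$. I expect these to be handled by the strict-inequality-plus-limit argument above together with the stated endpoint conventions. Containment of the intervals then follows immediately from the two endpoint inequalities.
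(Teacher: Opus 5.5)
Your proposal is correct and takes the same route as the paper. The paper states the corollary as an immediate consequence of \cref{thm:master}: applied through homogeneity to the rescaled vectors $\mathbf x/\theta$ and $(\mathbf 1-\mathbf x)/(1-\theta)$, it shows that every candidate mean rejected by SymPol is also rejected by Gaffke. Your strict-inequality-plus-limit argument just makes the quantile bookkeeping explicit, and you are right that the middle inequality is already part of \cref{thm:CI}.
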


\subsection{Inadmissibility of the Gaffke interval}

To understand the inadmissibility result that follows, a confidence interval is treated as a mapping from the data in $[0,1]^n$ and a confidence level in $[0,1]$ to a set in $[0,1]$. It is inadmissible if there is another mapping that still has valid coverage, never yields larger intervals, and on at least one input yields strictly smaller intervals.

\cref{lem:ad-dominance} 
and 
\cref{thm:Kad-validity} together imply that the confidence interval $[L_{\alpha_{\mathrm L}}(\mathbf x),U_{\alpha_{\mathrm U}}(\mathbf x)]$ 
generated by $K_2$ is strictly improved by 
 the corresponding confidence interval 
generated by $K^{\rm ad}_2$ for some data points, and therefore the former is inadmissible.
Similarly, 
$[L_{\rm SP}(\mathbf x),U_{\rm SP}(\mathbf x)]$ is also inadmissible.
The same statements can be made for $n\ge 3$ based on results in \cref{sec:higher-admissibility}.

\section{Asymptotic efficiency of the Gaffke interval}\label{sec:asymptotics}

We now assume that $X_1,X_2,\ldots$ are iid on $[0,1]$, with
\[
\mu=\E X_1,
\qquad
\sigma^2=\operatorname{Var}(X_1).
\]
Let
\[
\bar X_n=\frac1n\sum_{i=1}^nX_i,
\qquad
\widehat\sigma_n^2
=\frac1n\sum_{i=1}^n(X_i-\bar X_n)^2,
\]
and write $z_q=\Phi^{-1}(q)$, where $\Phi$ is the standard normal CDF.

\subsection{A conditional quantile central limit theorem}

\begin{lemma}[Dirichlet-average quantiles]\label{lem:dirCLT}
For each $m$, let $a_{1,m},\ldots,a_{m,m}$ be deterministic and uniformly
bounded.  Define
\[
\bar a_m=\frac1m\sum_{j=1}^m a_{j,m},
\qquad
s_m^2=\frac1m\sum_{j=1}^m(a_{j,m}-\bar a_m)^2,
\]
and suppose $s_m\to s>0$.  If
$\mathbf D^{(m)}\sim\Dir(1,\ldots,1)$ has $m$ coordinates and
$W_m=\sum_{j=1}^m a_{j,m}D_j^{(m)}$, then, for every fixed $q\in(0,1)$,
\begin{equation}\label{eq:quantileCLT}
Q_q(W_m)
=
\bar a_m+\frac{s_m}{\sqrt{m+1}}\{z_q+o(1)\}.
\end{equation}
\end{lemma}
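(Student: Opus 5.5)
We would use the exponential representation $D_j^{(m)}=E_j/\sum_{k}E_k$ with $E_1,\ldots,E_m$ iid $\operatorname{Exp}(1)$. The key observation is the exact identity, for each real $t$,
\[
\{W_m\le t\}=\Bigl\{\sum_{j=1}^m (a_{j,m}-t)E_j\le 0\Bigr\}.
\]
This turns the CDF of the Dirichlet average into the probability that a weighted sum of independent exponentials is nonpositive. We evaluate it along the sequence $t=t_m(x):=\bar a_m+s_m x/\sqrt{m+1}$ for fixed $x\in\mathbb R$.

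Writing $b_{j}=a_{j,m}-\bar a_m$, we have $\sum_j b_j=0$ and $\frac1m\sum_j b_j^2=s_m^2$. The sum becomes
\[
\sum_j b_j(E_j-1)\;-\;\frac{s_m x}{\sqrt{m+1}}\sum_j E_j .
\]
The first term has mean zero and variance $ms_m^2$. Because the $a_{j,m}$ are uniformly bounded and $s_m\to s>0$, Lyapunov's condition holds: the third moments are $O(m)$ against a variance of order $m$, so $\sum_j b_j(E_j-1)/(\sqrt m\,s_m)\Rightarrow N(0,1)$ by the Lindeberg--Feller CLT for triangular arrays. For the second term, $\sum_j E_j/m\to1$ in probability, so after dividing by $\sqrt m\,s_m$ it converges in probability to $x$ (note $\sqrt{m/(m+1)}\to1$). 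Slutsky's lemma then gives
\[
\Pp(W_m\le t_m(x))\to\Pp(Z\le x)=\Phi(x)
\]
for every fixed $x$, and the limit is continuous.

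To pass from CDF convergence to quantiles, we use a standard sandwich argument. Fix $q\in(0,1)$ and $\varepsilon>0$. Then $\Pp(W_m\le t_m(z_q+\varepsilon))\to\Phi(z_q+\varepsilon)>q$, so $Q_q(W_m)\le t_m(z_q+\varepsilon)$ eventually. Likewise $\Pp(W_m\le t_m(z_q-\varepsilon))\to\Phi(z_q-\varepsilon)<q$, which forces $Q_q(W_m)>t_m(z_q-\varepsilon)$ eventually. Hence $\sqrt{m+1}(Q_q(W_m)-\bar a_m)/s_m\in[z_q-\varepsilon,z_q+\varepsilon]$ for all large $m$. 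Since $\varepsilon$ is arbitrary, this is exactly \eqref{eq:quantileCLT}.

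The main obstacle is the triangular-array CLT step, where the weights vary with $m$; uniform boundedness together with $s_m\to s>0$ is what makes Lyapunov's condition go through. A secondary point is that the normalization $\sqrt{m+1}$ and the fluctuation of $\sum_j E_j$ are absorbed into the $o(1)$ term. One could instead use the exact variance $s_m^2/(m+1)$ of $W_m$ under $\Dir(1,\ldots,1)$, which explains the $m+1$, but the Slutsky argument above already suffices.
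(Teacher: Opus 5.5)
Your proof is correct and follows essentially the paper's route: the exponential representation, a Lindeberg/Lyapunov triangular-array CLT for $\sum_j (a_{j,m}-\bar a_m)(E_j-1)$, and Slutsky to absorb $\sum_j E_j/m$ and the $\sqrt{m+1}$ normalization. The differences are cosmetic. You linearize the event $\{W_m\le t\}$ at moving thresholds rather than normalizing the ratio $W_m-\bar a_m$. You also write out the quantile sandwich that the paper compresses into ``convergence of each fixed quantile follows.''
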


\begin{proof}
Let $E_1,\ldots,E_m$ be iid unit exponentials and
$D_j^{(m)}=E_j/\sum_{\ell=1}^mE_\ell$.  Since
$\sum_j(a_{j,m}-\bar a_m)=0$,
\[
W_m-\bar a_m
=
\frac{\sum_{j=1}^m(a_{j,m}-\bar a_m)(E_j-1)}
     {\sum_{j=1}^mE_j}.
\]
Set
\[
b_{j,m}=\frac{a_{j,m}-\bar a_m}{s_m\sqrt m}.
\]
Then $\sum_jb_{j,m}^2=1$ and $\max_j|b_{j,m}|\to0$ by boundedness and
$s_m\to s>0$.  The Lindeberg central limit theorem gives
\[
\sum_{j=1}^m b_{j,m}(E_j-1)\Rightarrow N(0,1),
\]
while $m^{-1}\sum_jE_j\to1$ in probability.  Indeed,
\[
\frac{\sqrt{m+1}(W_m-\bar a_m)}{s_m}
=
\sqrt{\frac{m+1}{m}}\,
\frac{m}{\sum_{j=1}^mE_j}
\sum_{j=1}^m b_{j,m}(E_j-1),
\]
and therefore Slutsky's theorem gives
\[
\frac{\sqrt{m+1}(W_m-\bar a_m)}{s_m}
\Rightarrow N(0,1).
\]
The limiting CDF is continuous and strictly increasing, so convergence of each
fixed quantile follows, proving \eqref{eq:quantileCLT}.
\end{proof}

\subsection{Endpoint expansions}

For the lower endpoint, the knots are $(0,X_1,\ldots,X_n)$.  Their empirical
mean and variance are
\begin{align}
\bar a_{\mathrm L,n}
&=\frac{n\bar X_n}{n+1},
\label{eq:meanL}\\
s_{\mathrm L,n}^2
&=\frac{n}{n+1}\widehat\sigma_n^2
+\frac{n}{(n+1)^2}\bar X_n^2.
\label{eq:varL}
\end{align}
For the upper endpoint, the knots are $(1,X_1,\ldots,X_n)$, so
\begin{align}
\bar a_{\mathrm U,n}
&=\frac{1+n\bar X_n}{n+1},
\label{eq:meanU}\\
s_{\mathrm U,n}^2
&=\frac{n}{n+1}\widehat\sigma_n^2
+\frac{n}{(n+1)^2}(1-\bar X_n)^2.
\label{eq:varU}
\end{align}

\begin{theorem}[Asymptotic endpoints and width]\label{thm:width}
Suppose $\sigma>0$ and fix
$\alpha_{\mathrm L},\alpha_{\mathrm U}\in(0,1)$.  Then, almost surely,
\begin{align}
L_{\alpha_{\mathrm L}}(\mathbf X)
&=
\bar X_n+\frac{\widehat\sigma_n}{\sqrt n}z_{\alpha_{\mathrm L}}
+o(n^{-1/2}),
\label{eq:Lasymp}\\
U_{\alpha_{\mathrm U}}(\mathbf X)
&=
\bar X_n+\frac{\widehat\sigma_n}{\sqrt n}z_{1-\alpha_{\mathrm U}}
+o(n^{-1/2}).
\label{eq:Uasymp}
\end{align}
Consequently,
\begin{equation}\label{eq:widthlimit}
\sqrt n\,
\bigl(U_{\alpha_{\mathrm U}}-L_{\alpha_{\mathrm L}}\bigr)
\longrightarrow
\sigma\bigl(z_{1-\alpha_{\mathrm U}}-z_{\alpha_{\mathrm L}}\bigr)
\qquad\text{almost surely}.
\end{equation}
If $\alpha_{\mathrm L}+\alpha_{\mathrm U}<1$, then
\begin{equation}\label{eq:asympcoverage}
\Pp\!\left\{
L_{\alpha_{\mathrm L}}< \mu< U_{\alpha_{\mathrm U}}
\right\}
\longrightarrow
1-\alpha_{\mathrm L}-\alpha_{\mathrm U}.
\end{equation}
\end{theorem}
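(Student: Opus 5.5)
The plan is to apply \cref{lem:dirCLT} pathwise, conditionally on the realized sample, with $m=n+1$ knots. For the lower endpoint, take the knots $(0,X_1,\ldots,X_n)$. Then $S_{\mathbf X}=\sum_j a_{j,m}D_j^{(m)}$, with empirical mean \eqref{eq:meanL} and variance \eqref{eq:varL}. For the upper endpoint, take the knots $(1,X_1,\ldots,X_n)$. Then $T_{\mathbf X}$ is the corresponding Dirichlet average, with \eqref{eq:meanU} and \eqref{eq:varU}.

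The knots lie in $[0,1]$, so they are uniformly bounded. By the strong law of large numbers, almost surely $\bar X_n\to\mu$ and $\widehat\sigma_n^2\to\sigma^2>0$. Hence $s_{\mathrm L,n}\to\sigma$ and $s_{\mathrm U,n}\to\sigma$ on a single probability-one event $\Omega_0$. On $\Omega_0$, fix the sample path. The knot arrays are then deterministic and satisfy the hypotheses of \cref{lem:dirCLT}, which gives
\[
L_{\alpha_{\mathrm L}}=Q_{\alpha_{\mathrm L}}(S_{\mathbf X})=\bar a_{\mathrm L,n}+\frac{s_{\mathrm L,n}}{\sqrt{n+2}}\{z_{\alpha_{\mathrm L}}+o(1)\},
\]
and similarly for $U_{\alpha_{\mathrm U}}$ with $z_{1-\alpha_{\mathrm U}}$.

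Next we replace the knot statistics by sample statistics at the $o(n^{-1/2})$ scale. We have $\bar a_{\mathrm L,n}-\bar X_n=-\bar X_n/(n+1)=O(1/n)$ and $\bar a_{\mathrm U,n}-\bar X_n=(1-\bar X_n)/(n+1)=O(1/n)$. Also $s_{\mathrm L,n},s_{\mathrm U,n}=\widehat\sigma_n+O(1/n)$, and $1/\sqrt{n+2}=n^{-1/2}(1+O(1/n))$. Since $z$ is fixed and $\widehat\sigma_n$ is bounded, these corrections are all $o(n^{-1/2})$, which gives \eqref{eq:Lasymp} and \eqref{eq:Uasymp}.

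Subtracting the two expansions, multiplying by $\sqrt n$, and using $\widehat\sigma_n\to\sigma$ yields \eqref{eq:widthlimit}.

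For the coverage claim \eqref{eq:asympcoverage}, write $Z_n=\sqrt n(\bar X_n-\mu)/\widehat\sigma_n$, which tends to $N(0,1)$ by the CLT and Slutsky. The event $\{L_{\alpha_{\mathrm L}}<\mu<U_{\alpha_{\mathrm U}}\}$ equals
\[
\{-z_{1-\alpha_{\mathrm U}}+r_n<Z_n<-z_{\alpha_{\mathrm L}}+r_n'\},
\]
where $r_n,r_n'\to0$ almost surely, because the expansions' $o(n^{-1/2})$ terms, multiplied by $\sqrt n/\widehat\sigma_n$, vanish. By Slutsky and continuity of $\Phi$, the probability tends to $\Phi(z_{1-\alpha_{\mathrm L}})-\Phi(z_{\alpha_{\mathrm U}})=1-\alpha_{\mathrm L}-\alpha_{\mathrm U}$. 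The condition $\alpha_{\mathrm L}+\alpha_{\mathrm U}<1$ makes this interval nonempty.

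The main obstacle is the pathwise application of \cref{lem:dirCLT}. That lemma is stated for deterministic triangular arrays, so it must be applied for each fixed $\omega\in\Omega_0$, where its hypotheses hold. This keeps the $o(1)$ terms genuinely almost sure rather than in probability. The remaining steps are bookkeeping of $O(1/n)$ corrections.
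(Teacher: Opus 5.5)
Your proposal is correct and follows the paper's proof essentially step for step. Both arguments apply \cref{lem:dirCLT} pathwise with $n+1$ knots on the probability-one event where $\bar X_n\to\mu$ and $\widehat\sigma_n\to\sigma$, absorb the $O(n^{-1})$ center and scale discrepancies into the $o(n^{-1/2})$ remainder, and obtain the coverage limit from the studentized CLT. Your coverage step, which rewrites the event in terms of $Z_n$ with vanishing shifts $r_n,r_n'$ and applies Slutsky, is a more explicit version of the paper's one-line argument.
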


\begin{proof}
On an event of probability one,
$\bar X_n\to\mu$ and $\widehat\sigma_n^2\to\sigma^2$.  Hence, by
\eqref{eq:varL}--\eqref{eq:varU},
$s_{\mathrm L,n}\to\sigma$ and $s_{\mathrm U,n}\to\sigma$.  Apply
\cref{lem:dirCLT} pathwise to the lower and upper knot arrays.  Since each
array has $n+1$ knots,
\begin{align*}
L_{\alpha_{\mathrm L}}
&=
\frac{n\bar X_n}{n+1}
+\frac{s_{\mathrm L,n}}{\sqrt{n+2}}z_{\alpha_{\mathrm L}}
+o(n^{-1/2}),\\
U_{\alpha_{\mathrm U}}
&=
\frac{1+n\bar X_n}{n+1}
+\frac{s_{\mathrm U,n}}{\sqrt{n+2}}z_{1-\alpha_{\mathrm U}}
+o(n^{-1/2}).
\end{align*}
The differences between the displayed centers and $\bar X_n$ are $O(n^{-1})$,
and $s_{\mathrm L,n}-\widehat\sigma_n=o(1)$ and
$s_{\mathrm U,n}-\widehat\sigma_n=o(1)$.  This proves
\eqref{eq:Lasymp}--\eqref{eq:Uasymp}, and subtraction gives
\eqref{eq:widthlimit}.

Finally, the studentized central limit theorem and the endpoint expansions give
\[
\Pp\{L_{\alpha_{\mathrm L}}<\mu< U_{\alpha_{\mathrm U}}\}
\to
\Pp\{z_{\alpha_{\mathrm U}}< Z<  z_{1-\alpha_{\mathrm L}}\}
=1-\alpha_{\mathrm L}-\alpha_{\mathrm U},
\]
where $Z\sim N(0,1)$.
\end{proof}

For equal tails,
$\alpha_{\mathrm L}=\alpha_{\mathrm U}=\alpha/2$, so
\begin{equation}\label{eq:equalwidth}
\sqrt n\,(U-L)
\longrightarrow
2\sigma z_{1-\alpha/2}
\qquad\text{almost surely}.
\end{equation}
Thus the Gaffke interval has the same first-order width and endpoint expansion
as the usual Wald interval, while retaining finite-sample distribution-free
coverage.

\begin{corollary}[Asymptotically shortest fixed tail split]
Among fixed allocations
$\alpha_{\mathrm L}+\alpha_{\mathrm U}=\alpha$, the equal-tail allocation
minimizes the limiting width constant in \eqref{eq:widthlimit}.
\end{corollary}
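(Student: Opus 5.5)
The limiting width constant in \eqref{eq:widthlimit} is $\sigma\{z_{1-\alpha_{\mathrm U}}-z_{\alpha_{\mathrm L}}\}$. By symmetry of the normal law, $z_{\alpha_{\mathrm L}}=-z_{1-\alpha_{\mathrm L}}$, so the constant equals
\[
\sigma\bigl\{z_{1-\alpha_{\mathrm U}}+z_{1-\alpha_{\mathrm L}}\bigr\}
=\sigma\bigl\{\Phi^{-1}(1-\alpha_{\mathrm U})+\Phi^{-1}(1-\alpha+\alpha_{\mathrm U})\bigr\}.
\]
Since $\sigma>0$ is fixed, I would reduce the claim to minimizing
\[
g(\beta)=\Phi^{-1}(1-\beta)+\Phi^{-1}(1-\alpha+\beta),\qquad \beta=\alpha_{\mathrm U},
\]
over the range permitted by \cref{thm:width}, namely $\alpha_{\mathrm L},\alpha_{\mathrm U}\in(0,1)$. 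Under $\alpha_{\mathrm L}+\alpha_{\mathrm U}=\alpha<1$ this means $\beta\in(0,\alpha)$. The boundary allocations correspond to one-sided bounds with an infinite quantile, and $g(\beta)\to\infty$ as $\beta\to0$ or $\beta\to\alpha$.

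The key step is convexity. On $(0,1/2]$ the function $h(\gamma)=\Phi^{-1}(1-\gamma)$ is convex, because $h'(\gamma)=-1/\varphi(h(\gamma))$ and $h''(\gamma)=-h(\gamma)/\varphi(h(\gamma))^2\cdot$ (a sign check to be done carefully) is nonnegative there. More cleanly, the quantile function $\Phi^{-1}$ is convex on $[1/2,1)$, since $\Phi$ is concave on $[0,\infty)$.

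Write $g(\beta)=h(\beta)+h(\alpha-\beta)$. If $\alpha\le1$, I would avoid relying on convexity over the whole domain and use symmetry instead. By symmetry in $\beta\leftrightarrow\alpha-\beta$, it suffices to show $g'(\beta)<0$ for $\beta<\alpha/2$. We have
\[
g'(\beta)=-\frac1{\varphi(h(\beta))}+\frac1{\varphi(h(\alpha-\beta))}.
\]
For $\beta<\alpha/2$ we have $\beta<\alpha-\beta$. Both $1-\beta$ and $1-\alpha+\beta$ lie in $(1-\alpha,1)$, and $1-\alpha+\beta<1-\beta$. So I compare $|h|$: we need $\varphi(h(\beta))<\varphi(h(\alpha-\beta))$, that is, $|z_{1-\beta}|>|z_{1-\alpha+\beta}|$.

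This holds when both quantile levels exceed $1/2$. For $\alpha$ near $1$ it needs the observation that $|\Phi^{-1}(p)|$ depends on $|p-1/2|$. Here $|1-\beta-1/2|=1/2-\beta$, while $|1-\alpha+\beta-1/2|=|1/2-\alpha+\beta|$. The difference of squares is $(1-\alpha)(\alpha-2\beta)>0$ for $\beta<\alpha/2$, which is exactly what is needed. This gives strict decrease of $g$ on $(0,\alpha/2)$ and, by symmetry, strict increase on $(\alpha/2,\alpha)$, so $\beta=\alpha/2$ is the unique minimizer.

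The main obstacle is only handling $\alpha$ possibly above $1/2$, where the quantile levels can straddle $1/2$. The $|p-1/2|$ comparison above handles this uniformly, so no convexity argument is needed. Finally I would note the minimal value $2\sigma z_{1-\alpha/2}$, matching \eqref{eq:equalwidth}.
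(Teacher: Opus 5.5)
Your argument is correct and is essentially the paper's proof. Both differentiate the limiting constant along $\alpha_{\mathrm L}+\alpha_{\mathrm U}=\alpha$, use symmetry about $\alpha/2$, and show the derivative is negative below $\alpha/2$ by comparing the normal densities at the two quantiles; your difference-of-squares computation $(1-\alpha)(\alpha-2\beta)>0$ is just a rephrasing of the paper's observation that $z_{1-\alpha+t}$ lies strictly between $z_t$ and $-z_t$. The abandoned convexity aside is unnecessary and can be dropped, and its sign is in fact $h''(\gamma)=h(\gamma)/\varphi(h(\gamma))^2$.
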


\begin{proof}
Write $t=\alpha_{\mathrm L}$ and
$g(t)=z_{1-\alpha+t}-z_t$ for $0<t<\alpha$.  The function is symmetric about
$\alpha/2$.  If $t<\alpha/2$, then
$z_{1-\alpha+t}$ lies strictly between $z_t$ and $-z_t$, so
$\phi(z_{1-\alpha+t})>\phi(z_t)$ and
\[
g'(t)=\frac{1}{\phi(z_{1-\alpha+t})}-\frac{1}{\phi(z_t)}<0.
\]
Symmetry gives the result.
\end{proof}

\subsection{The degenerate case}

If $\sigma=0$, then $X_i=\mu$ almost surely.  Since
$D_0\sim\Beta(1,n)$,
\[
S_{\mathbf X}=\mu(1-D_0),
\qquad
T_{\mathbf X}=\mu+(1-\mu)D_0.
\]
The endpoints are therefore exact:
\begin{equation}\label{eq:degenerate}
L_{\alpha_{\mathrm L}}=\mu\alpha_{\mathrm L}^{1/n},
\qquad
U_{\alpha_{\mathrm U}}
=1-(1-\mu)\alpha_{\mathrm U}^{1/n}.
\end{equation}
For equal tails $q=\alpha/2$,
\[
U-L=1-q^{1/n}
=-\frac{\log q}{n}+o(n^{-1}),
\]
so the interval contracts at
rate $n^{-1}$ rather than $n^{-1/2}$.



\section{Simulations}\label{sec:simulations}

The simulations address practical performance rather than admissibility of the
underlying e-to-p merger.  We first quantify the pointwise advantage over
SymPol, then compare the inverted Gaffke interval with modern bounded-mean
procedures.  The admissible rule $\Kad$ is a two-input construction, and the
higher-dimensional rule $\overline K_n$ acts only on exact neutral faces; for
the reasons in \cref{sec:interval-caution}, neither is included as a general
confidence-interval comparator.

\subsection{Computation of the Gaffke interval}\label{sec:computation}

No grid search over candidate means is needed.  Both confidence endpoints are
ordinary one-dimensional quantiles of a uniform Dirichlet average.

Let $\mathbf a=(a_0,\ldots,a_n)$ and
\[
W_{\mathbf a}=\sum_{i=0}^n a_iD_i,
\qquad \mathbf D\sim\Dir(1,\ldots,1).
\]
When the knots $a_i$ are distinct, the CDF has the truncated-power form
\begin{equation}\label{eq:dirichletCDF}
F_{\mathbf a}(t)
=
\sum_{i=0}^n
\frac{(t-a_i)_+^n}{\prod_{j\ne i}(a_j-a_i)}.
\end{equation}
The density is the derivative of \eqref{eq:dirichletCDF},
\begin{equation}\label{eq:dirichletdensity}
f_{\mathbf a}(t)
=
n\sum_{i=0}^n
\frac{(t-a_i)_+^{n-1}}{\prod_{j\ne i}(a_j-a_i)},
\end{equation}
and is a normalized B-spline with knots $a_0,\ldots,a_n$; see
\citet{Carlson1991} and \citet{ZuCastell2002}.  Repeated knots are handled by
continuous extension, equivalently by confluent divided differences.

The direct formula \eqref{eq:dirichletCDF} can suffer severe cancellation when
knots are equal or nearly equal.  A stable implementation proceeds as follows:

\begin{enumerate}[leftmargin=2em]
\item sort the knots;
\item construct the normalized B-spline density associated with them;
\item integrate the spline to obtain $F_{\mathbf a}$;
\item solve $F_{\mathbf a}(t)=q$ on $[\min_i a_i,\max_i a_i]$ by bisection.
\end{enumerate}

For the Gaffke interval, use the knot vectors
\[
(0,x_1,\ldots,x_n)
\quad\text{and}\quad
(x_1,\ldots,x_n,1)
\]
and quantile levels $\alpha_{\mathrm L}$ and $1-\alpha_{\mathrm U}$,
respectively.  The Gaffke test itself is simply
\[
K_n(\mathbf x)=F_{(0,x_1,\ldots,x_n)}(1).
\]
For Bernoulli data, beta quantiles from \cref{cor:CP} are simpler.

The accompanying code implements the B-spline calculation, the
normalized recursion \eqref{eq:Arecursion}, the confidence interval, and all
numerical experiments in \cref{sec:empirical}.  Monte Carlo Dirichlet weights
provide an alternative for very large $n$, but deterministic spline inversion
avoids simulation error in the reported confidence endpoints.

We used Python for the B-spline evaluation of
$K_n$, the confidence interval, the SymPol recursion, all
simulations, exact two-point calculations, and the figures.  The continuous
experiments use seeds $260708415$ and $260310329$; the sample-size experiment
uses seed $881+n$ for sample size $n$.

\subsection{Test power: Gaffke versus SymPol}\label{sec:empirical}

Pointwise domination settles the direction of the power comparison.  Numerical
experiments remain useful for quantifying its magnitude and for distinguishing
the effect of pointwise ordering from the effect of finite-sample
conservativeness.


We used $n=30$, nominal level $\alpha=0.05$, and $50{,}000$ Monte Carlo
replications for each distribution.  Under the boundary null, each distribution
was scaled to have mean one.  Under the alternative it was scaled to have mean
$1.2$.  Common random numbers were used by multiplying each mean-one sample by
$1.2$.  The maximum Monte Carlo standard error of a reported rejection
probability is $\sqrt{0.25/50000}=0.00224$.

The Gaffke $p$-value was evaluated deterministically by the B-spline method in
\cref{sec:computation}.  The symmetric means were computed through the normalized
recursion
\begin{equation}\label{eq:Arecursion}
A_j^{(m)}
=\frac{m-j}{m}A_j^{(m-1)}
+\frac{j}{m}x_mA_{j-1}^{(m-1)},
\qquad j=1,\ldots,m,
\end{equation}
where $A_0^{(m)}=1$.
For comparison, we also report a diagnostic ``size-matched'' power.  For each
parametric null family and each method, the critical value was set to its
empirical lower $5\%$ null quantile.  These calibrated critical values are not
distribution-free and are used only to separate ordering from calibration.

\begin{table}[h!]
\centering
\caption{Null rejection and power in continuous models.  Each entry gives
Gaffke / SymPol, in percent.  Size-matched power uses a
family-specific empirical $5\%$ critical value.}
\label{tab:continuous}
\small
\begin{tabular}{lccc}
\toprule
Distribution
& Null rejection
& Nominal power
& Size-matched power\\
\midrule
Gamma, shape $0.25$     & $2.84/0.85$ & $8.91/3.46$  & $13.62/13.79$\\
Exponential             & $2.69/0.79$ & $19.17/8.98$ & $27.47/27.38$\\
Gamma, shape $4$        & $1.55/0.80$ & $44.74/33.68$& $64.39/63.59$\\
Lognormal, $\sigma=0.5$ & $1.60/0.79$ & $42.97/33.10$& $62.50/63.28$\\
Lognormal, $\sigma=1$   & $2.51/0.72$ & $14.45/6.63$ & $22.48/22.58$\\
Lognormal, $\sigma=1.5$ & $2.36/0.73$ & $7.44/2.85$  & $12.63/12.36$\\
Pareto, shape $1.5$     & $1.64/0.63$ & $6.31/2.87$  & $14.95/14.65$\\
\bottomrule
\end{tabular}
\end{table}

At the advertised $5\%$ thresholds, the Gaffke power is between $1.30$ and
$2.61$ times the SymPol power in these examples.  The
SymPol test rejects only about $0.6\%$--$0.9\%$ of boundary-null
samples, whereas the Gaffke test rejects about $1.6\%$--$2.8\%$.  After
family-specific size matching, the maximum absolute power difference in
\cref{tab:continuous} is $0.80$ percentage points.  Thus most of the nominal
power advantage in these models is attributable to tighter finite-sample
calibration.



\begin{table}[t]
\centering
\caption{Power against an exponential alternative with mean $1.2$ as the
sample size varies.  Each entry gives Gaffke / SymPol.}
\label{tab:sample-size}
\small
\begin{tabular}{rcc}
\toprule
$n$ & Nominal power & Size-matched power\\
\midrule
5   & $0.0279/0.0245$ & $0.1181/0.1152$\\
10  & $0.0675/0.0414$ & $0.1511/0.1495$\\
20  & $0.1312/0.0631$ & $0.2184/0.2164$\\
30  & $0.1889/0.0883$ & $0.2730/0.2728$\\
50  & $0.2914/0.1402$ & $0.3782/0.3789$\\
100 & $0.5135/0.2918$ & $0.5760/0.5760$\\
\bottomrule
\end{tabular}
\end{table}

\subsection{Confidence interval widths}

The two tests compared in the previous subsection were one-sided tests for unbounded data. For the problem of estimating means of bounded data, more methods exist. 
Here, we compare several leading confidence intervals: W-S\&R24 from~\cite[Remark 3]{WaudbySmithRamdas2024}, STAR from~\cite[Algorithm 4]{VO25}, both SymPol and KL-inf from~\cite{MingShenWang2026}, and Gaffke's. The plots below show the actual intervals (left) and their widths (right) for the following five distributions in order:
\[
\mathrm{Bernoulli}(1/10),\quad \mathrm{Bernoulli}(0.5),\quad
\mathrm{Unif}(0,1),\quad \mathrm{Beta}(10,30),\quad
\mathrm{Beta}(100,100).
\]
All confidence intervals are constructed at the $95\%$ level, and are averaged over 5 replications.
Among the methods, distributions, and sample sizes examined here, the Gaffke interval is the shortest throughout, while the identity of the best non-Gaffke comparator changes across settings.  This finite-sample pattern is consistent with the asymptotic analysis: Gaffke adapts to the population variance at first order, whereas the other procedures in this particular comparison retain larger limiting constants.  The statement is empirical and restricted to the designs considered; it is not an admissibility claim.

\noindent\includegraphics[width=\linewidth]{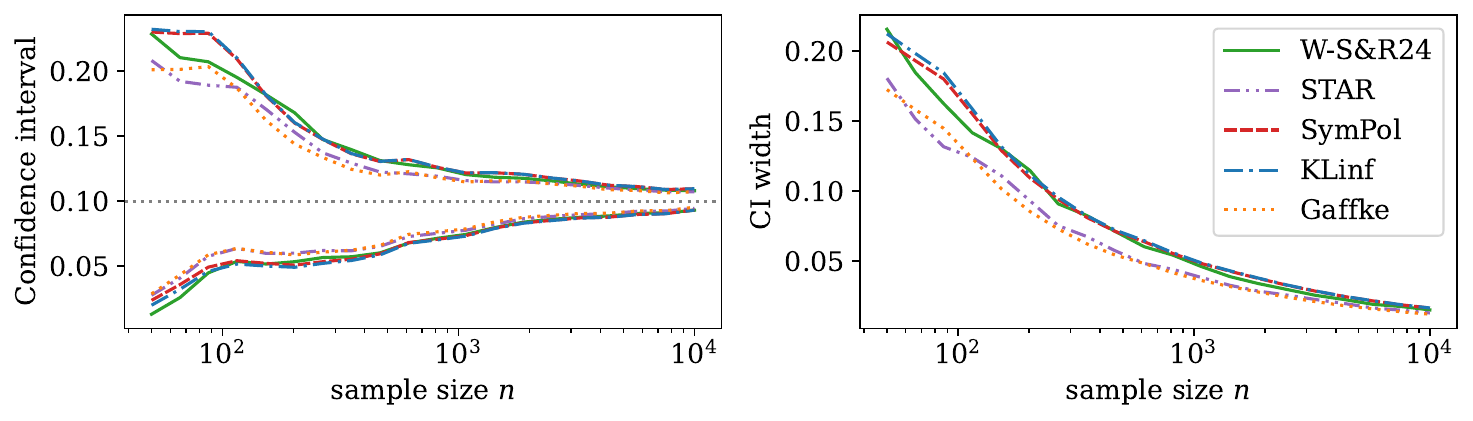}

\noindent\includegraphics[width=\linewidth]{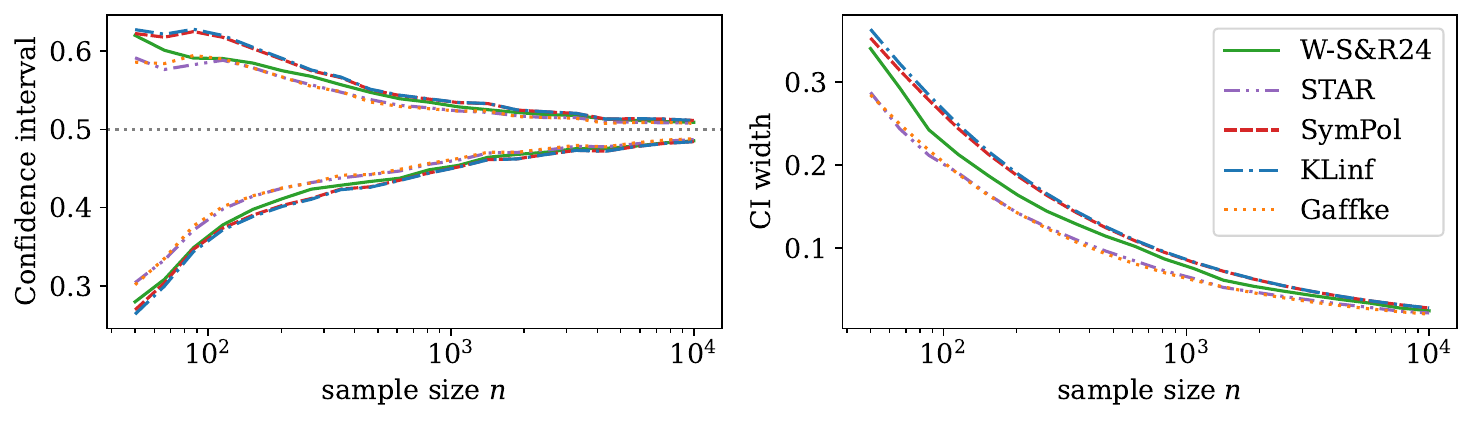}

\noindent\includegraphics[width=\linewidth]{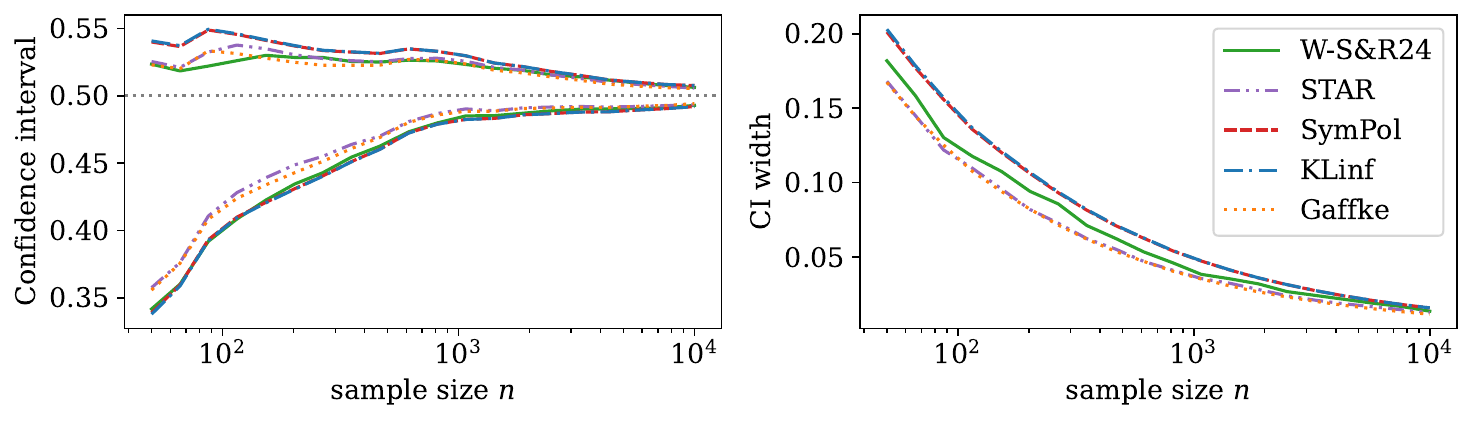}

\noindent\includegraphics[width=\linewidth]{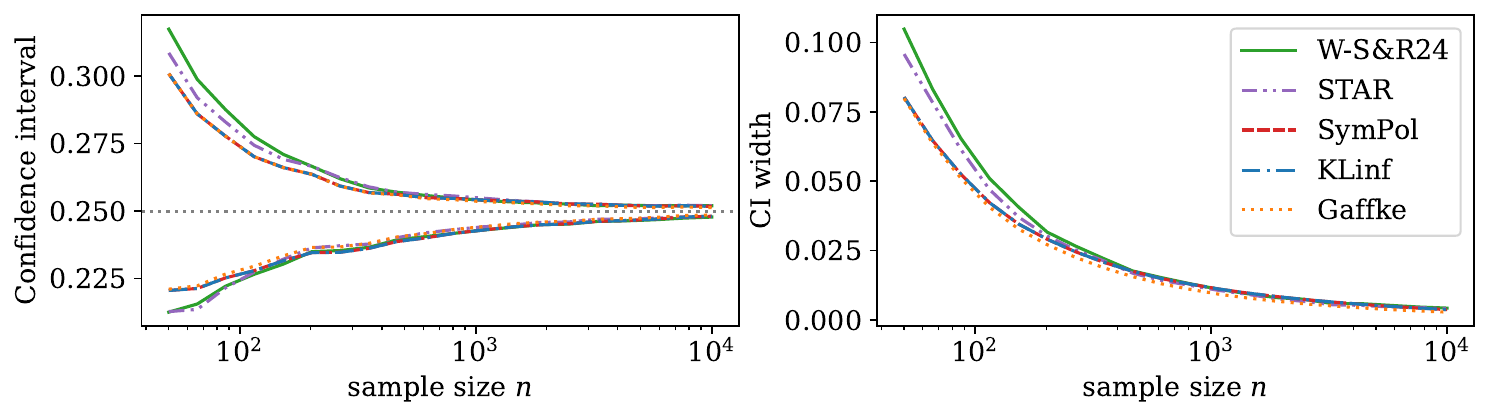}

\noindent\includegraphics[width=\linewidth]{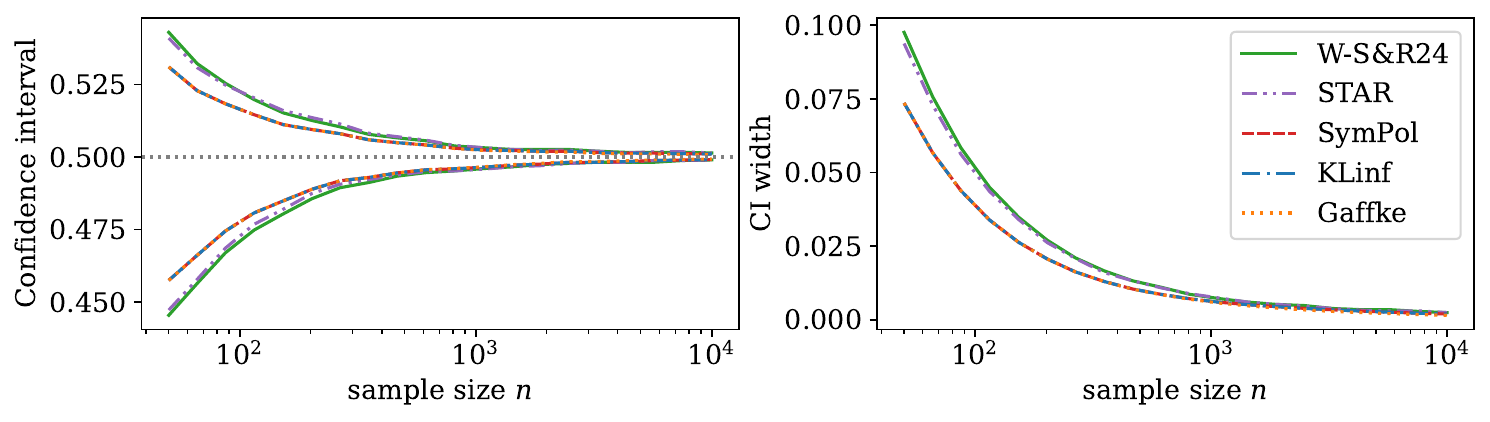}

\subsection{Gaffke versus Austern-Mackey}
\label{sec:gaffke-vs-ebe}

We compare the equal-tail Gaffke interval with the finite-sample empirical Berry--Esseen (EBE) interval of \citet{austern2022efficient} for estimating the mean of an iid\ random variable supported on $[0,1]$, since this method also has the same asymptotic CLT efficiency property for its limiting width, unlike all the comparators in the previous subsection.    

\paragraph{Comparator.}
The EBE construction first forms a high-probability interval for the unknown population standard deviation and then maximizes a known-variance quantile bound over that interval.  We used the authors' current implementations of the efficient quantile bound and empirical variance bounds.  To make the Monte Carlo experiment computationally feasible, the maximization over $\sigma\in[0,1/2]$ was evaluated on an 11-point grid, after expanding each sample-dependent standard-deviation interval by one grid point on both sides.  This is a numerical approximation to the repository's continuous optimizer, not a bit-for-bit invocation of \texttt{ebe\_ci} for every replication.  Three direct checks found grid-minus-optimizer half-width differences of $-0.0013$, $0.0105$, and $0.0091$ at $n=25,100,$ and $500$, respectively.  The grid therefore tends to be conservative in the moderate- and large-$n$ checks, and the reported EBE widths should be interpreted as approximate upper estimates rather than exact repository outputs.

\paragraph{Simulation design.}
We used level $1-\alpha=0.95$, sample sizes $n\in\{25,50,100,250,500,1000\}$, and six distributions:
\[
\mathrm{Bernoulli}(0.5),\quad \mathrm{Bernoulli}(0.1),\quad
\mathrm{Unif}(0,1),\quad \mathrm{Beta}(2,5),\quad
\mathrm{Beta}(1/2,1/2),\quad \mathrm{Beta}(20,20).
\]
We used 1,000 replications in the first 23 cells and 250 replications in the remaining clustered-beta cells, for 26,250 intervals in total.  The replication counts and binomial Monte Carlo standard errors are included in the data file.  The distributions span the maximal-variance Bernoulli law, a sparse Bernoulli law, a continuous symmetric law, a skewed law, a U-shaped law, and a highly concentrated low-variance law.

\begin{table}[t]
\centering
\caption{Empirical coverage and mean width of equal-tail $95\%$ Gaffke intervals and grid-approximated empirical Berry--Esseen (EBE) intervals. Parenthetical values in the $N$ column are Monte Carlo replications.}
\label{tab:gaffke-ebe-main}
\small
\begin{tabular}{lrrrrrr}
\toprule
Distribution & $n$ & $N$ & Cov. G & Cov. EBE & Width G & Width EBE \\
\midrule
$\mathrm{Bernoulli}(0.5)$ & 100 & 1000 & 0.967 & 0.987 & 0.2024 & 0.2505 \\
$\mathrm{Bernoulli}(0.5)$ & 1000 & 1000 & 0.975 & 0.989 & 0.0629 & 0.0741 \\
$\mathrm{Bernoulli}(0.1)$ & 100 & 1000 & 0.955 & 1.000 & 0.1265 & 0.2230 \\
$\mathrm{Bernoulli}(0.1)$ & 1000 & 1000 & 0.957 & 0.999 & 0.0382 & 0.0704 \\
$\mathrm{Unif}(0,1)$ & 100 & 1000 & 0.968 & 1.000 & 0.1224 & 0.2505 \\
$\mathrm{Unif}(0,1)$ & 1000 & 1000 & 0.955 & 1.000 & 0.0367 & 0.0664 \\
$\mathrm{Beta}(2,5)$ & 100 & 1000 & 0.986 & 1.000 & 0.0755 & 0.2334 \\
$\mathrm{Beta}(2,5)$ & 1000 & 250 & 0.960 & 1.000 & 0.0209 & 0.0477 \\
$\mathrm{Beta}(1/2,1/2)$ & 100 & 250 & 0.940 & 0.996 & 0.1465 & 0.2505 \\
$\mathrm{Beta}(1/2,1/2)$ & 1000 & 250 & 0.964 & 1.000 & 0.0447 & 0.0727 \\
$\mathrm{Beta}(20,20)$ & 100 & 250 & 1.000 & 1.000 & 0.0480 & 0.2112 \\
$\mathrm{Beta}(20,20)$ & 1000 & 250 & 0.972 & 1.000 & 0.0109 & 0.0371 \\
\bottomrule
\end{tabular}
\end{table}

\paragraph{Coverage.}
The empirical coverage of the Gaffke interval ranged from 0.940 to 1.000.  The lowest value, 0.940, occurred in a cell with only 250 replications and is within roughly one to two Monte Carlo standard errors of $0.95$.  The grid-approximated EBE interval covered between 0.979 and 1.000, and was almost always quite conservative. \cref{fig:coverage-gaffke} displays the Gaffke  and EBE coverages.

\begin{figure}[t]
\centering
\includegraphics[width=.49\linewidth]{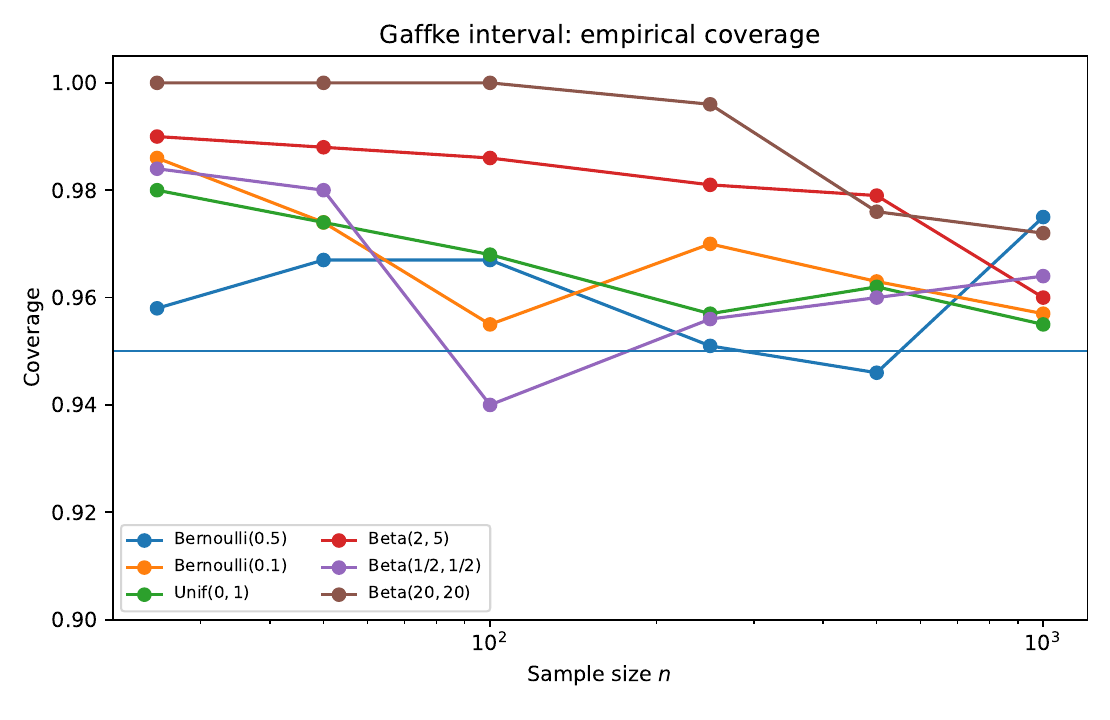}
\includegraphics[width=.49\linewidth]{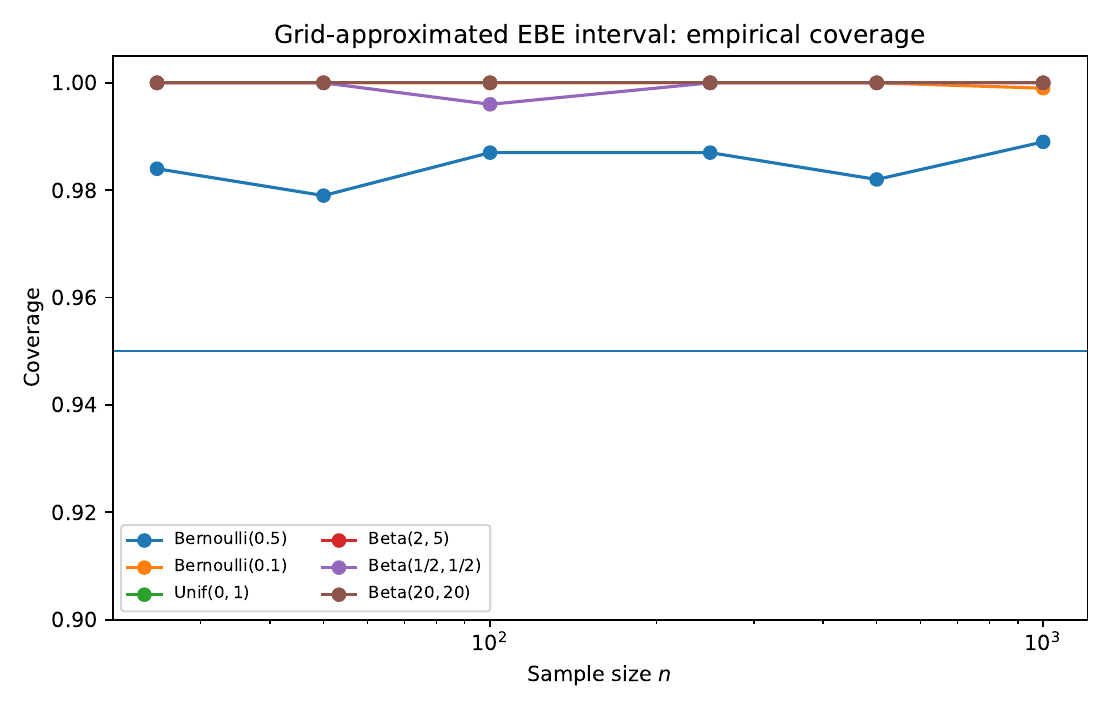}
\caption{Empirical coverage of the equal-tail Gaffke interval (left) and the grid-approximated EBE interval (right). The horizontal line is the nominal level $0.95$.}
\label{fig:coverage-gaffke}
\end{figure}

\paragraph{Width.}
The EBE-to-Gaffke mean-width ratio ranged from 1.18 (Bernoulli(0.5), $n=1000$) to 4.40 (Beta(20,20), $n=100$).  The gap was smallest for $\mathrm{Bernoulli}(0.5)$, whose variance is the largest possible on $[0,1]$, and largest for the concentrated $\mathrm{Beta}(20,20)$ distribution.  This pattern is consistent with a finite-sample price for constructing and protecting uniformly over a confidence set for the unknown variance.  In contrast, the Gaffke interval adapts directly to the empirical distribution through the conditional Dirichlet quantiles.  \cref{fig:width-ratio} reports the full width ratios.

\begin{figure}[t]
\centering
\includegraphics[width=.6\linewidth]{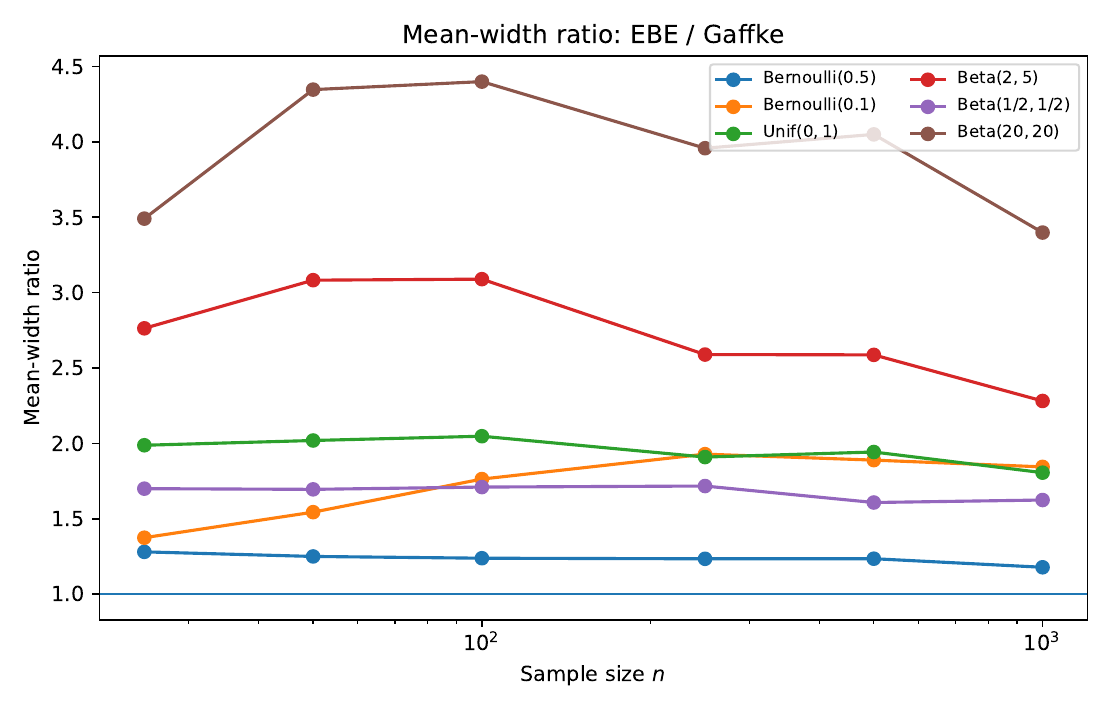}
\caption{Ratio of mean EBE width to mean Gaffke width. Values above one favor Gaffke.}
\label{fig:width-ratio}
\end{figure}

Both procedures are finite-sample valid under their assumptions and both have the oracle Gaussian width $2\sigma z_{1-\alpha/2}/\sqrt n$ as their first-order target.  The experiment indicates that the Gaffke interval can have a substantial finite-sample advantage, especially when the population variance is small.  The magnitude of the EBE disadvantage in the present tables is somewhat overstated by the coarse variance grid; the direct optimizer checks suggest that this numerical effect is material for low-variance distributions.  A definitive high-precision comparison should rerun the supplied code with a finer grid or invoke \texttt{ebe\_ci} separately for every replication.

This interval comparison and the inadmissibility result answer different
questions.  The simulations compare implementable inversions for moderate and
large $n$, whereas \cref{thm:Kbar-validity} gives a pointwise improvement of
the fixed-vector test on exceptional neutral faces.  The observed width
advantage of Gaffke therefore remains informative even though $K_n$ is not an
admissible e-to-p merger.

\section{Discussion}

Gaffke's statistic now has a more nuanced theoretical profile than a purely
positive power or interval comparison suggests.  On the one hand, it is a
finite-sample valid, distribution-free method for combining independent
e-values; it pointwise dominates SymPol under independence, and its inversion
for bounded means has excellent statistical properties.  The interval is
always nonempty, is computable from two Dirichlet quantiles, contains
Clopper--Pearson as a special case, and is first-order asymptotically efficient
at every nondegenerate iid distribution.  The simulations reinforce this
positive conclusion: among the interval procedures considered, Gaffke is the
shortest in every tested cell, including comparisons with the
Austern--Mackey construction that shares the same oracle Gaussian first-order
target.

On the other hand, finite-sample validity and admissibility are different.
The Markov rule $K_1$ is admissible, but $K_2$ is not.  The explicit rule
$\Kad$ is a strict improvement throughout the genuinely mixed quadrant and is
the unique admissible dominator of $K_2$.  The neutral-face construction
$\overline K_n$ then proves unrestricted inadmissibility of $K_n$ for every
$n\ge2$.  These results show that pointwise domination of familiar competitors
does not imply global optimality among all independent e-to-p mergers.

External randomization strengthens this conclusion.  The rule $K_n^{\Pi}$
uses one auxiliary uniform variable and strictly improves Gaffke on the
full-dimensional upper orthant.  Its form is especially transparent because
$K_n$ equals the reciprocal product there, so $K_n^{\Pi}$ applies uniformly
randomized Markov calibration to the product e-value.  The improvement is
locally sharp: no valid randomized merger can have a conditional cdf at a
fixed upper-orthant point larger than that of $U K_n$.  At the same time,
randomized admissibility is a different criterion from deterministic
admissibility, and inversion would produce randomized confidence sets rather
than an automatically shorter deterministic interval.

The present inadmissibility result does not yet have a comparably broad
methodological consequence for bounded-mean confidence intervals.  The
criterion is pointwise in e-value space, while interval endpoints arise from
a nested parameter-indexed inversion.  The exact two-input improvement can be
inverted but addresses only $n=2$.  In higher dimensions, the proved
dominator changes the test only on neutral faces and is not globally
coordinatewise monotone.  Consequently, it need not change the closure of the
inverted confidence set and does not supply a generally useful tighter
interval.  A practically important next step is to find a globally monotone
higher-dimensional dominator and determine whether its inversion moves the
endpoints by a nonnegligible amount.  Until such a construction is available,
the empirical and asymptotic evidence in favor of the original Gaffke
interval remains directly relevant.

The broader validity domain of SymPol is another important qualification.  It
applies to co-valid e-variables, whereas the currently established validity of
Gaffke's statistic requires independence.  Pointwise domination therefore
does not make Gaffke a replacement under the general co-valid dependence
structure of \citet{MingShenWang2026}.  Conditional independence given a
random element $Z$, together with conditional e-validity, is one useful
dependent-looking exception: conditioning on $Z$ allows the Vlassis--Thomas
argument to be applied directly.

In his original work~\citep{Gaffke2005} as well as the follow-up papers by \cite{LearnedMillerThomas2020,VlassisThomas2026}, Gaffke's method was cast as a test for the mean of a nonnegative random variable. We view the test from a slightly different lens:
Gaffke's method combines $n$ independent e-values $X_1,\dots,X_n$ into a p-value. 
The most naive way of performing this combination is to return $p_\text{prod}(\mathbf x) = \min(1/\prod_{i}x_i,1)$, which is indeed always a valid p-value by Markov's inequality.
If all the e-values are larger than one, then in fact $K_n(\mathbf x) = p_\text{prod}(\mathbf x)$, but if $\prod_{i}x_i \leq 1$, then $p_\text{prod}=1$ but $K_n$ evaluates to a different expression less than 1. For instance, $K_n(M,0,0,\dots,0)=1-(1-1/M)^n$ if $M\ge1$, which resembles a Sidak correction for the maximum e-value. In this sense, $K_n$ interpolates between the product and the maximum of the e-values (appropriately converted to p-values), and thus adapts to both sparse and dense alternatives, providing evidence against the null both when there is lots of weak evidence (many e-values just larger than 1) or when there is sparse strong evidence (a few very large e-values, the rest smaller than one). 

This also means that Gaffke's test has implications well beyond the bounded mean problem for which it has been extensively studied, a fact that has not been previously appreciated. It is a tool for hypothesis testing in any problem setting for which one can convert the raw data into e-values. In fact, it is known that \emph{every} null hypothesis can be equally specified by instead describing the set of e-values for it, and that one can always construct e-values for any testing problem~\cite{ramdas2025hypothesis}. Thus, in some sense, Gaffke's p-value is a very broadly applicable tool in hypothesis testing, especially in nonparametric composite null hypotheses, where nonasymptotically valid tests may be otherwise hard to construct.

\subsection*{Acknowledgments}
AI tools were used when preparing  this paper. The authors take full responsibility for its contents.

\bibliographystyle{plainnat}
\bibliography{references}





\end{document}